\def\Z{{\mathbb Z}}
\def\R{{\mathbb R}}
\def\e{{\varepsilon}}
\def\g{{\gamma}}
\def\r{{\rho}}
\def\b{{\beta}}
\def\what{\widehat}
\def\Im{\mbox{Im}}
\def\Re{\mbox{Re}}
\def\supp{\,\mbox{supp}\,}
\def\+R{+_{_{ \!\! \R}}}
\def\ve{{\varepsilon}}
\def\bar{\overline}
\def\wt{\widetilde}
\def\tofill{\vskip30pt $\cdots$ To fill in $\cdots$ \vskip30pt}
\newcommand{\comment}[1]{\vskip.3cm
\fbox{%
\parbox{0.93\linewidth}{\footnotesize #1}}
\vskip.3cm}
\numberwithin{equation}{section}
\begin{document}

\title[Weak null quasilinear waves]{On the global behavior of weak null \\ quasilinear wave equations}


\author{Yu Deng}
\address{Courant Institute of Mathematical Sciences}
\email{yudeng@cims.nyu.edu}

\author{Fabio Pusateri}
\address{Princeton University}
\email{fabiop@math.princeton.edu}


\newtheorem{theorem}{Theorem}[section]
\newtheorem{lemma}[theorem]{Lemma}
\newtheorem{proposition}[theorem]{Proposition}
\newtheorem{corollary}[theorem]{Corollary}
\newtheorem{definition}[theorem]{Definition}
\newtheorem{remark}[theorem]{Remark}
\newtheorem{conjecture}[theorem]{Conjecture}
\newtheorem{convention}{Convention}
\numberwithin{equation}{section}


\begin{abstract}
We consider a class of quasilinear wave equations in $3+1$ space-time dimensions
that satisfy the ``weak null condition'' as defined by Lindblad and Rodnianski \cite{LR1},
and study the large time behavior of solutions to the Cauchy problem.
The prototype for the class of equations considered is $-\partial_t^2 u + (1+u) \Delta u = 0$.
Global solutions for such equations have been constructed by Lindblad \cite{Lindblad1,Lindblad2} and Alinhac \cite{Alinhac1}.
Our main results are the derivation of a precise asymptotic system with good error bounds,
and a detailed description of the behavior of solutions close to the light cone, including the blow-up at infinity.
\end{abstract}

\maketitle

\setcounter{tocdepth}{1}

\begin{quote}
\tableofcontents
\end{quote}

\section{Introduction}


In this paper we are interested in the global behavior of nonlinear wave equations in $3+1$ space-time dimensions.
Our main focus is the description of the asymptotic behavior for small solutions of the Cauchy problem
for a class of nonlinear wave equations satisfying the so-called ``weak null condition'' \cite{LR1,LR2}.
The prototype that we are going to consider is the equation
\begin{align}
\label{WNW00}
-\partial_t^2 u + (1+u) \Delta u = 0.
\end{align}

\smallskip
\subsubsection*{Background and the Weak Null Condition Conjecture}
One area of research on nonlinear wave equations where major progress has been made 
focuses on identifying nonlinearities that lead to global solutions for small initial data.
Of particular interest is the case of quadratic nonlinearities, that is, systems of the form
\begin{equation}
\label{quadraticsystem0}
\Box u_i = \sum a_{i, \alpha \beta}^{jk} \partial^\alpha u_j \partial^\beta u_k, 
\end{equation}
where $\Box = -\partial_t^2 + \Delta$, $i = 1,\dots,N$ for some positive integer $N$, and the sum runs over $j,k=1,\dots,N$,
and all multi-indices $\alpha,\beta \in \Z_+^4$ with $|\alpha|,|\beta| \leq 2$, $|\alpha| + |\beta| \leq 3$,
with the usual convention that $\partial_0 = \partial_t$.
Indeed, in $3$ spatial dimensions general quadratic nonlinearities can have long range effects:
Since solutions of the linear wave equation decay uniformly in space at best at the rate of $t^{-1}$,
the $L^2$ norm of the nonlinearity computed on a linear solution also decays at the borderline non-integrable rate of $t^{-1}$.
In particular, quadratic nonlinearities can contribute to the long time behavior of solutions and even cause finite-time blowup.
It is in fact known since the pioneering works of John \cite{John0,John2} that this latter scenario can occur even for solutions with small, smooth and localized data.
At the same time, for some very general classes of quadratic nonlinearities, solutions
were shown to exist almost globally, that is, for times of the order $\exp(c/\e)$, where $\e$ is the size of the Cauchy data,
by John and Klainerman \cite{JK} and Klainerman \cite{K0}.

The main breakthrough in identifying classes of nonlinear wave equations
where solutions with small data exist globally and scatter was in the works of Klainerman \cite{K1,K00},  
see also Christodoulou \cite{C1}, on the {\it null condition}, which we will refer to as (NC).
The class of nonlinearities that satisfy (NC) 
have the form
\begin{equation}
\label{quadraticsystem}
\Box u_i = \sum_{ \substack{1 \leq |\alpha|,|\beta| \leq 2 \\ |\alpha|+|\beta| \leq 3}}
      a_{i, \alpha \beta}^{jk} \partial^\alpha u_j \partial^\beta u_k  
\end{equation}
where the following condition holds:
\begin{equation}
\label{nullcondition}
\sum
  a_{i, \alpha \beta}^{jk} \xi_\alpha \xi_\beta = 0
  \qquad \mbox{for any $\xi \in \R^4$ such that $-\xi_0^2 + \xi_1^2 + \xi_2^2 + \xi_3^2 = 0$}.
\end{equation}
For such systems it was shown by Klainerman \cite{K1} that in $3+1$ dimension solutions with small and a localized data exist globally.
Moreover, (NC) leads to a linear asymptotic behavior of solutions as $t\rightarrow \infty$. 

On the other hand, while (NC) is sufficient for global existence, it can be easily seen to not be necessary as the simple example
\begin{equation}
\label{weaknullex}
\left\{
\begin{array}{l}
\Box u_1 = (\partial_tu_2)^2
\\
\Box u_2 = 0
\end{array}
\right.
\end{equation}
shows. Since \eqref{weaknullex} is a decoupled system, one can trivially solve globally-in-time the equation for $u_1$.
One should note however that the behavior of $u_1$ is different from that of a linear solution.

In \cite{LR1,LR2} Lindblad and Rodnianski introduced a weaker notion than (NC), that of the {\it weak null condition}, which we will refer to as (WNC).
This concept was later instrumental to showing the stability of Minkowski space for the Einstein's vacuum equations in harmonic gauge \cite{LR2,LR3}.
To describe (WNC) consider a system of the form \eqref{quadraticsystem}, which we rewrite for simplicity as
\begin{align}
\label{Wsys}
\partial_t^2 u - \Delta u = Q(u,\partial u, \partial^2 u).
\end{align}
Making the ansatz
\begin{align}
\label{ansatz}
u(t,x) \approx \frac{\varepsilon}{|x|} U(q, s, \omega), \qquad q = t-|x|, \quad s = \e \log t, \quad \omega = x/|x|,
\end{align}
one can derive, at least formally, up to faster decaying remainders, an {\it asymptotic PDE} for $U$.
We provide some examples of such asymptotic PDEs in the next paragraph.
This type of asymptotic PDE was introduced by H\"ormander \cite{Hor1,Hor2} to study the blow-up time for scalar wave equations 
violating the null condition. 

The system \eqref{Wsys} is said to satisfy (WNC) if the asymptotic PDE for the ansatz \eqref{ansatz} admits a global solution defined for all $s$,
which, together with its derivatives, grows at most exponentially in $s$ (algebraically in $t$).
An important conjecture in the field is the following\footnote{It is natural to attribute this conjecture to Lindblad and Rodnianski,
following \cite{LR1,LR2}. However, these same authors commented in \cite[p. 1405]{LR3}
``In our previous work \cite{LR1} we identified criteria under which it is more likely that a quasilinear system of the form 
\eqref{quadraticsystem} has global solutions [\dots]
At this point, it is unclear whether this criterion is sufficient for establishing a ``small data global
existence'' result for a {\it general} system of quasilinear hyperbolic equations.''}

\begin{conjecture}\label{WNCconj}
(WNC) is a sufficient condition for the global regularity of the Cauchy problem with small and localized data.
\end{conjecture}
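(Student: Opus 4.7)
The plan is to attempt Conjecture \ref{WNCconj} by marrying a Lindblad--Rodnianski style vector-field energy scheme for the full system \eqref{quadraticsystem} with a rigorous matched-asymptotic analysis driven by the hypothesis that the reduced PDE for $U$ admits the prescribed exponentially bounded global solution. The first step is to rewrite the nonlinearity $Q$ in a null frame $\{L,\underline{L},e_A\}$ adapted to the light cone, separating the contributions $\partial_L\partial_L,\ \partial_L \partial_{e_A},\ \partial_{e_A}\partial_{e_B}$ (which by Klainerman's null-form observation pick up an extra factor of $\langle t-r\rangle/\langle t\rangle$) from the truly dangerous $\partial_{\underline L}\partial_{\underline L}$ piece, whose symbol is precisely what the null condition \eqref{nullcondition} would kill and what (WNC) allows to survive but only in a structured way.

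Next I would set up a bootstrap on the weighted Lindblad--Rodnianski energy
\[
\E_N(t) = \sum_{|I|\le N} \bigl\| w(q)^{1/2} \partial Z^I u(t,\cdot)\bigr\|_{L^2},\qquad w(q) = (1+q_+)^{1+2\mu},
\]
with $Z\in\{\partial_\alpha,\Omega_{ij},S\}$ (scaling and rotations, omitting Lorentz boosts because of the quasilinear coefficient $1+u$), closing on an a priori bound $\E_N(t)\le \varepsilon \langle t\rangle^{C\varepsilon}$. Klainerman--Sobolev with the ghost weight trick gives the expected $|\partial u|\lesssim \varepsilon\langle t\rangle^{-1+C\varepsilon}\langle q\rangle^{-1/2-\mu}$ and a $\langle t\rangle^{-1}$ decay for good derivatives $\bar\partial u$. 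The crucial step is to replace the standard multiplier with one built from a perturbed Lorentzian metric $g(u)$ reconstructed from the solution itself, so the quasilinear commutator losses are absorbed into a logarithmic correction of the foliation (the same device that turned an almost-global theorem into a global one in \cite{LR2,LR3}).

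The heart of the argument, and where (WNC) must enter, is the derivation of the asymptotic system for $U(q,s,\omega)$ by a careful stationary-phase/ray-tracing argument along outgoing null geodesics of $g(u)$, with explicit error bounds of the form $t^{-2+\delta}$ in $L^\infty_{q,\omega}L^2_{t\sim 2^k}$. Using the (WNC) hypothesis that the reduced system admits a global $U$ with $\|\partial U(\cdot,s,\cdot)\|\lesssim e^{Cs}=t^{C\varepsilon}$, one transfers this bound back to obtain pointwise control on $\partial_{\underline L}\partial_{\underline L} u$ near the light cone. This is then fed into the energy inequality: the quadratic term $\int Q(u,\partial u,\partial^2 u)\cdot \partial_t Z^I u$ is split into null pieces (integrable in $t$) and the (WNC) piece, which contributes at most a factor of $t^{C\varepsilon}/t$, i.e.\ a logarithmic derivative that closes the bootstrap after choosing $\mu,\varepsilon,C$ compatibly.

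The main obstacle, and the reason the conjecture remains open in full generality, is that the (WNC) hypothesis as stated is purely qualitative: it only asserts existence of $U$ with at-most-exponential growth, with no structural relation between $U$ and the quantities one must control in energy. For Einstein's equations the weak null structure is genuinely algebraic (decoupling in the null frame) and for the prototype $(1+u)\Delta u$ it is semilinear at leading order in the profile $U$, so transferring bounds from $U$ to $u$ is essentially a substitution. In the general quasilinear system \eqref{quadraticsystem}, however, $U$ solves a possibly fully nonlinear asymptotic PDE whose global exponentially-bounded solution need not be stable or unique, and the commutators $[Z^I,Q(u,\partial u,\partial^2u)]$ can introduce top-order derivative losses that the profile analysis does not see. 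The most likely path forward is therefore to refine (WNC) into a quantitative \emph{stability} assumption on the asymptotic system --- e.g.\ exponential bounds on the linearization of the reduced PDE around $U$, uniform in $s$ and $\omega$ --- and prove the conjecture under this stronger hypothesis, which covers all presently known examples including the prototype \eqref{WNW00}.
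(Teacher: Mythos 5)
The statement you are addressing is Conjecture \ref{WNCconj}, and the paper does not prove it: it is stated explicitly as an open problem (the authors even quote Lindblad--Rodnianski's own caveat that it is unclear whether (WNC) suffices for small-data global existence of a general quasilinear system). What the paper actually proves, in Theorems \ref{Mainth1} and \ref{Mainth2}, is a precise asymptotic system with error control and exact asymptotics along the light cone for the prototype \eqref{WNW00} (and \eqref{WNW1}), and even there global existence is \emph{assumed} as input from Lindblad \cite{Lindblad2} together with the a priori weighted energy bounds \eqref{Ali0}--\eqref{Aliinfty}; the analysis is then carried out on the Fourier side (Duhamel's formula \eqref{duhamel}, improved bounds on $\widehat f$, restriction to nearly parallel frequencies via Lemma \ref{LemAIBP}), not via the physical-space null-frame/ghost-weight scheme you outline. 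So there is no ``paper proof'' to match your proposal against.

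As a proof attempt of the conjecture itself, your proposal has a genuine gap, and it is exactly the one you name in your last paragraph: the hypothesis (WNC) only asserts the \emph{existence} of a globally defined, at-most-exponentially-growing solution $U$ of the reduced system, with no quantitative relation between $U$ and the true solution. Your ``crucial step'' --- transferring the bound $\|\partial U\|\lesssim e^{Cs}=t^{C\e}$ back into pointwise control of the bad derivative $\partial_{\underline L}\partial_{\underline L}u$ near the cone with errors of size $t^{-2+\delta}$ --- is asserted, not proved, and it is precisely where the difficulty lies: one needs (i) a rigorous derivation of the asymptotic system with error bounds in a norm strong enough to close a bootstrap (this is the content, for the prototype only, of Theorem \ref{Mainth1} and the remainder class of Definition \ref{def}, and it already occupies most of the paper under an a priori global-existence assumption), and (ii) some stability/uniqueness structure for the reduced PDE so that the actual solution stays close to the prescribed $U$, which (WNC) does not provide for a general system \eqref{quadraticsystem}. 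Without these, the quadratic term in your energy inequality cannot be bounded by $t^{C\e}/t$ and the bootstrap on $\E_N$ does not close. Your concluding suggestion (strengthening (WNC) to a quantitative stability hypothesis) is a reasonable research direction, but it changes the statement; as written, the proposal is a program, not a proof, and the conjecture remains open --- which is precisely the status the paper assigns to it.
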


To give an idea of the definition of (WNC) consider the following three examples of scalar nonlinear wave equations, and their associated asymptotic PDEs:
\begin{align}
\label{exs}
\begin{array}{lcl}
\Box u = u_t^2 - |\nabla u|^2  \qquad & \leadsto & \qquad \partial_s \partial_q U = 0,
\\
\Box u = u_t \, \Delta u & \leadsto & \qquad \partial_s \partial_q U + \partial_q U \partial_q^2 U = 0,
\\
\Box u = u \, \Delta u & \leadsto & \qquad \partial_s \partial_q U + U \, \partial_q^2 U = 0.
\end{array}
\end{align}
The first equation is an example (the only scalar one) of the classical (NC); when (NC) is satisfied
one always obtains the trivial asymptotic PDE $\partial_s \partial_q U = 0$, which obviously has global and uniformly bounded solutions.
The second example is Burgers' equation $\partial_s v + v \partial_q v = 0$, for $v = \partial_q U$;
solutions to this PDE blow-up in finite time, and so do solutions of the corresponding wave equation.
See the works \cite{Hor2,Alibook,Alinhac00,Cshocks,HKSWshocks,speck} and references therein
for extensive studies of blow-up for geometric classes of hyperbolic wave equations 
violating (WNC).

The last of the three examples in \eqref{exs} is the prototypical equation \eqref{WNW00}.
For this equation the construction of global solutions for small $C_0^\infty$ data was done by Lindblad \cite{Lindblad1} in the radial case.
Global solutions where obtained by Alinhac in \cite{Alinhac1} in the non-radial case.
In \cite{Lindblad2} Lindblad treated the more general case of \eqref{WNW1} below.
One way to see the difference with respect to the previous asymptotic PDEs is to set again $v = \partial_q U$, obtaining
$\partial_s v + \partial_q^{-1} v \partial_q v = 0$; in this equation $v$  is transported by $\partial_q^{-1} v$
which smooths out the contribution of high frequencies, and prevents the formation of shocks by the intersection of characteristic.

The explicit global solution of the asymptotic PDE
\begin{equation}
\label{asyPDE0}
\partial_s \partial_q U + U \, \partial_q^2 U = 0,
\end{equation}
with initial data $U(0) = \varphi$, $\psi:=\partial_r\varphi$, is given by
\begin{equation}
\label{asyPDEsol}
U(s;z(s,\alpha)) = \frac{1}{2} \left[ \int_{-\infty}^\alpha - \int_\alpha^\infty \right] e^{- \psi(\beta)s} \psi(\beta) \,\mathrm{d}\beta
\end{equation}
with
\begin{equation}
\label{asyPDEsol2}
z(s,\alpha) = \frac{1}{2} \left[ \int_\alpha^\infty - \int_{-\infty}^\alpha \right] e^{- \psi(\beta)s} \,\mathrm{d}\beta,
\end{equation}
from which one sees exponential growth in (logarithmic) time $s$ for derivatives of $\partial_q U$.

\smallskip
\subsubsection*{Motivation and results}
Our interest in the asymptotic behavior of solutions to \eqref{WNW00}, as well as the more general equation \eqref{WNW1}, is twofold.
First, understanding the global behavior of solutions of nonlinear hyperbolic and dispersive equations is one of the main objectives in the field.
For \eqref{WNW00} and \eqref{WNW1} this question has been left open since the cited works of Lindblad and Alinhac. See also H\"ormander \cite{Hor1} 
and John \cite{John3}.
Recently, several results on asymptotics for hyperbolic PDEs have been obtained.
In \cite{LindbladAsy} Lindblad has proven asymptotics for small solutions of Einstein's vacuum equations in General Relativity.
Semilinear models for Einstein's equations have been studied in \cite{LinSch}, and the Maxwell--Klein-Gordon system was considered in \cite{CKL}.
Other works on asymptotics for semilinear wave equation include \cite{Kata,KK} and references therein.
See also the recent work \cite{IPMKG1} 
on modified scattering for coupled Wave--Klein-Gordon systems modeling the Einstein--Klein-Gordon system.
We remark that, in the context of quasilinear waves, \eqref{WNW1} is conceptually harder to treat than Einstein's equations 
despite its much simpler look;
indeed, Einstein's equations satisfy (WNC) essentially by having a nilpotent structure as \eqref{weaknullex} (with additional null forms and cubic terms).

Second, it is well established that, especially for quasilinear evolution equations, the question of global regularity is intimately tied to
a precise understanding of the asymptotic behavior of solutions.
Some important examples in this respect are the global stability of Minkowski space in General Relativity \cite{CK,LR3}, 
the stability of one dimensional interfaces for gravity waves \cite{IoPu2,ADa,IT} and capillary waves \cite{IoPu4,IT2},
and the stability of two dimensional interfaces for the full gravity-capillary water waves system \cite{DIPP}.
Thus, one of the main reasons for our work is a contribution to the understanding of the Weak Null Condition Conjecture \ref{WNCconj}
through the study of the asymptotic behavior of solutions of \eqref{WNW00} and \eqref{WNW1}.
To this end, we should also remark that since solutions of the asymptotic system associated to \eqref{WNW00} grow exponentially in $s$,
this equation is supposed to represent the hardest example within the class of equations satisfying (WNC).
In fact, our techniques and results can be adapted to general systems satisfying (WNC).

\smallskip
To end this introduction, let us give a brief and informal description of our results.
Precise statements are given in Theorem \ref{Mainth1} (the derivation of an asymptotic system)
and Theorem \ref{Mainth2} (exact asymptotics along the light cone). Some ideas of the proof are given in Subsection \ref{ssecStra}.
We start by considering a global small solution $u$ of \eqref{WNW00} as constructed by Lindblad \cite{Lindblad2},
and assume a priori some of the bounds obtained in \cite{Lindblad2} on weighted $L^2$ norms of the solution.
We then improve these bounds by iterating Duhamel's formula in Fourier space
and obtain almost sharp bounds for the Fourier transform of the profile $f$ associated to the solution $u$, 
see \eqref{profile}.
A detailed analysis of the bilinear interactions, depending on the size and direction of the frequency,
leads to the derivation of an ``asymptotic ODE'' for $\what{f}$ with good control on the error terms.
Thanks to this we can then construct
(a) an {\it asymptotic profile } $\mathcal{U}: (s,\rho,\theta) \in \R \times \R \times \mathbb{S}^2 \mapsto \R$
which satisfies the asymptotic PDE \eqref{asyPDE0} with suitable initial conditions, 
and (b) a {\it shift} function $\mathcal{A}: (s,\theta) \in \R \times \mathbb{S}^2 \mapsto \R$,
which depends nonlinearly on $f$, such that, for $|x| \approx |t| \gg 1$ we have
\begin{align}
\label{infmain}
v(t,x) \approx \frac{1}{|x|} \mathcal{U} \big( \log|t|, t-|x| + \mathcal{A}(t,x/|x|), x/|x| \big).
\end{align}
We remark that our results also apply to the general case
\begin{align}
\label{WNW1}
-\partial_t^2 u + g_{\alpha\beta}(u) \partial^\alpha\partial^\beta u = 0
\end{align}
with $g_{\alpha\beta}$ smooth and $g_{\alpha\beta}(0) = 1$.
For simplicity we will work with \eqref{WNW00} but it will be apparent that all our arguments can be carried out for \eqref{WNW1} as well.


\smallskip
\subsubsection*{Organization of the paper}
The set up of the equation, Duhamel's formula in Fourier space, and basic definition are given in the first part of Section \ref{secsetup}.
In Subsection \ref{secStat} we give the statements of our main results, and in \ref{ssecStra} we sketch some of the ideas in the proofs.
Section \ref{secPre} contains several auxiliary lemmas. 
In \ref{secBB} we first recall the a priori bounds on weighted energies of \cite{Lindblad2} and derive some basic consequences. 
Lemma \ref{LemAIBP} contains a result about angular integration by parts in oscillatory integrals 
that will be repeatedly used to restrict bilinear interactions to nearly parallel frequencies.
In \ref{secImp} we obtain some key improved bounds on the spatial Fourier transform of the profile and localized version of it.
Section \ref{SecProof1} is dedicated to the proof of Theorem \ref{Mainth1};
this is subdivided into five lemmas which are stated in Subsection \ref{secMainLemmas}.
Finally, exact asymptotics as in the statement of Theorem \ref{Mainth2} are derived in Section \ref{SecProof2}.

\bigskip
\section{Set up and statements}\label{secsetup}

\subsection{Duhamel's formula}\label{setup}
Throughout this paper the Fourier transform is defined as
\begin{align}
\label{F}
\widehat{f}(\xi)=(2\pi)^{-d/2}\int_{\mathbb{R}^d}e^{-ix\cdot\xi}f(x)\,\mathrm{d}x
\end{align}
in dimension $d$.
We fix a number of parameters
\begin{align}
\label{param}
\begin{split}
& N>N_1 \gg 1, \quad \gamma\gg\delta:=N^{-1}, \qquad (N=1350, \gamma=1/90, N_1=N/3=450),
\end{split}
\end{align}
and a solution $u \in C([0,\infty),H^{N+1}(\R^3))$ to
\begin{equation}
\label{Wave}
\Box u = u\Delta u,
\end{equation}
with initial data
\begin{align}
\label{Wavedata}
(u(0),\partial_tu(0)) = (\e \varphi^0,\e \psi^0),
\end{align}
where $0<\varepsilon \ll 1$ and $\varphi^0$ and $\psi^0$ are fixed Schwartz functions.
In fact, we only need $\varphi_0$ and $\psi_0$ to belong to a suitable weighted Sobolev space
for which the energy estimate in Lindblad \cite{Lindblad2} hold, see \eqref{Ali0}-\eqref{Aliinfty}.  

We define
\begin{equation}
\label{profile}
v :=(\partial_t-i|\nabla|)u,\qquad f(t) :=e^{it|\nabla|}v(t).
\end{equation}
Since
\begin{equation}
\label{profileeq}
(\partial_t+i|\nabla|)v=u\Delta u, \qquad \partial_t f := e^{it|\nabla|} (u\Delta u),
  \qquad u = \frac{i(v-\bar{v})}{2|\nabla|} = -\frac{1}{|\nabla|}\Im (v),
\end{equation}
we have the Duhamel's formula
\begin{equation}\label{duhamel}
\widehat{f}(t,\xi)-\widehat{f}(0,\xi)
= \sum_{\kappa_1,\kappa_2\in\{+,-\}}\kappa_1\kappa_2J_{\kappa_1\kappa_2}[f_{\kappa_1},f_{\kappa_2}](t,\xi),
\end{equation}
where we denote
$$f_{+}=f, \qquad f_{-}=\bar{f},$$
and
\begin{align}
\label{duhamel2}
\begin{split}
J_{\kappa_1\kappa_2}[f,g](t,\xi) &:= \int_0^t I_{\kappa_1\kappa_2}[f,g](s,\xi) \, \mathrm{d}s,
\\
I_{\kappa_1\kappa_2}[f,g](t,\xi) &:= \frac{1}{4}(2\pi)^{-3/2}
\int_{\R^3} e^{it\Phi_{\kappa_1\kappa_2}(\xi,\eta)}\frac{|\xi-\eta|}{|\eta|}
\widehat{f}(t,\xi-\eta)\widehat{g}(t,\eta)\,\mathrm{d}\eta,
\\
\Phi_{\kappa_1\kappa_2}(\xi,\eta) &:= |\xi|-\kappa_1|\xi-\eta|-\kappa_2|\eta|.
\end{split}
\end{align}
The functions $\Phi_{\kappa_1\kappa_2}$, are usually called {\it phases} and  measure the quadratic interactions between waves.


\subsection{Notation}
We choose a suitable decomposition of the indicator function $\mathbf{1}_{[0,\infty)}$ by fixing functions $\tau_0,\tau_1,\cdots : \R \to [0,1]$, with the properties
\begin{align}
\label{timedecomp}
 \mathrm{supp}\,\tau_m\subseteq [2^{m-1},2^{m+1}],\quad \sum_{m=0}^{\infty}\tau_m(t) = 1,  \quad |\tau_m^\prime(t)|\lesssim 2^{-m}.
\end{align}
We use this to decompose (omitting the dependence on the inputs for a lighter notation)
\begin{align}
\label{duhamel3}
\begin{split}
J_{\kappa_1\kappa_2}(t,\xi) = \sum_{m=0}^{\infty} J^{m}_{\kappa_1\kappa_2}(t,\xi), \qquad
J^{m}_{\kappa_1\kappa_2}(t,\xi) := \int_0^t \tau_m(s)I_{\kappa_1\kappa_2}(s,\xi) \, \mathrm{d}s
=: \int_0^t I^{m}_{\kappa_1\kappa_2}(s,\xi) \, \mathrm{d}s.
\end{split}
\end{align}

$C$ will generally denote an absolute constant that may vary from line to line.
The notation $A \lesssim B$ means that $A\leq CB$ for some absolute constant $C>0$; we will use $\gtrsim$ and $\approx$ in a similar standard way.

We denote the space-time gradient $\partial=(\partial_t,\nabla)$.
The rotation vector fields are $\Omega=(x_i\partial_j-x_j\partial_i)_{1\leq i<j\leq 3}$,
and the scaling vector field $S=t\partial_t+x\cdot\partial_x$.

We fix a smooth radial cutoff function $\varphi$ that equals $1$ for $|\xi|\leq 1$ and vanishes for $|\xi|\geq 2$, and define
\begin{align}
\label{LPdef}
\begin{split}
& \varphi_k(z)=\varphi(2^{-k}z)-\varphi(2^{-k+1}z), \quad \varphi_I(z) := \sum_{j \in I\cap \Z}\varphi_j(z) \quad \textrm{for any $I\subset\R$}
\\
& \varphi_{\leq A}(z) := \varphi_{(-\infty, A]}(z), \quad \varphi_{>A}(z) = \varphi_{(A,\infty)}(z), \quad \textrm{ etc.}
\end{split}
\end{align}
We let $P_k$ denote the standard Littlewood-Paley projections, $\widehat{P_kf}(\xi):=\varphi_k(\xi)\widehat{f}(\xi)$, and let $Q_{jk}$ be defined,
for $(j,k)\in\mathcal{J}:=\{(j,k):j\geq \max(-k,0)\}$, as
\[
Q_{jk}f(x)=P_{[k-2,k+2]}(\varphi_j^{(k)}\cdot P_kf)(x),\quad \varphi_j^{(k)}=
\left\{
\begin{array}{ll}
\varphi_j(x),&j>\max(-k,0);
\\
\varphi_{\leq j}(x),&j=\max(-k,0).
\end{array}
\right.
\]
For any $k\in\mathbb{Z}$ we will define $k^+=\max(k,0)$ and $k^-=\max(-k,0)$.

\subsection{Definition of Acceptable Error}
Our main objective is to find the leading order asymptotics in \eqref{duhamel2}.
As it will turn out this will require the derivation of a nonlinear PDE that drives the asymptotic dynamics, as well as a phase correction/shift.
We then need a proper definition of acceptable remainder terms. The norm in which we will measure our remainders is:
\begin{align}
\label{Xnorm}
{\| g \|}_{X} := \sup_{|\ell|\leq N_1}\sup_{k\in\mathbb{Z}}2^{k+15k^+}\big\|\varphi_k(\xi)\Omega^{\ell}g(\xi)\big\|_{L^\infty_\xi}.
\end{align}

\begin{definition}[Acceptable remainder]\label{def}
We will deem a function $R(t,\xi)$ to be an acceptable remainder if we can write is as
\begin{align}
\label{defRem}
\begin{split}
& R(t,\xi) = R_1(t,\xi) + R_2(t,\xi),
\\ & {\| R_1(t) \|}_X \lesssim \e^2 \langle t\rangle^{-1-\g},
\qquad  R_2 = \partial_t R' \quad \mbox{with} \quad {\|R'(t)\|}_X \lesssim \e^2 \langle t\rangle^{-\g}.
\end{split}
\end{align}
We will write $R=R(t,\xi) \in \mathcal{R}$ if it is an acceptable remainder in the above sense.
\end{definition}

\medskip
\subsection{Statements of the main results}\label{secStat}

Our first main Theorem gives convergence of solutions of \eqref{Wave} to an asymptotic system.

\begin{theorem}[Approximation by the asymptotic system]\label{Mainth1}
Let $u$ be a solution of the equation \eqref{Wave}-\eqref{Wavedata}, and let $f$ be the associated profile in \eqref{profile}.
Denote spherical coordinates by $\xi = \rho \theta$, $\rho>0$,$\phi \in\mathbb{S}^2$.
Define the ``radial profile''
\begin{align}
\label{Mainth11}
\begin{split}
F_\theta(t,\rho) &:= \widehat{f}(t,\rho\theta),\quad \rho\geq 0;\qquad  F_\theta(t,\rho) := \overline{F_\theta(t,-\rho)},\quad \rho<0,
\end{split}
\end{align}
and the quantities
\begin{align}
\label{Mainth12}
\begin{split}
h_\theta(t,\rho) &:=  \int_\R \int_{\mathbb{S}^2_\phi} e^{it \rho[1-\theta \cdot \phi]} \big| F_\phi(t,r) \big|^2 r^2
  \,\mathrm{d}\phi \mathrm{d}r,
\\
H_\theta(t,\rho) &:= \varphi_{\leq-10}(\rho\langle t \rangle^{7/8}) \int_0^t h_\theta(s,\rho)\,\mathrm{d}s,
\end{split}
\end{align}
and
\begin{align}
\label{Mainth13}
\begin{split}
B_\theta(t) &:= \frac{1}{32\pi^2}
  \mathrm{Re} \Big[ \int_\R \int_{\mathbb{S}^2_\phi} 
  e^{itr[1-\theta \cdot \phi]} H_\phi(t,r) \,r \,\mathrm{d}\phi \mathrm{d}r \Big],
\\
C_\theta(t) &:= B_\theta(t)-\frac{1}{32\pi^2}\frac{1}{t}\int_{\mathbb{R}}\varphi_{\leq -10}(\rho\langle t\rangle^{7/8})H_\theta(t,\rho)\,\mathrm{d}\rho.
\end{split}
\end{align}

Then $F_\theta$ satisfies the ``asymptotic PDE''
\begin{align}\label{asymptpde}
\begin{split}
\partial_t F_\theta(t,\rho) &=\frac{1}{2(2\pi)^{3/2}}\varphi_{\leq0}(\rho\langle t \rangle^{7/8})\cdot h_{\theta}(t,\rho) -i\rho C_\theta(t) F_\theta(t,\rho) \\&+ \frac{1}{it}\frac{1}{4(2\pi)^{1/2}}
  \int_\R \frac{(\rho-r)^2}{\rho} F_\theta(t,\rho-r)F_\theta(t,r) \, \mathrm{d}r+ \mathcal{R}(t,\xi),
\end{split}
\end{align}
where 
$\mathcal{R}$ is an acceptable remainder in the sense of Definition \ref{def}.
\end{theorem}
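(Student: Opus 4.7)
The starting point is to differentiate the Duhamel identity \eqref{duhamel}:
\[
\partial_t \widehat{f}(t,\xi) \;=\; \sum_{\kappa_1,\kappa_2 \in \{+,-\}} \kappa_1\kappa_2\, I_{\kappa_1\kappa_2}[f_{\kappa_1},f_{\kappa_2}](t,\xi).
\]
I will treat the four bilinear pieces separately, matching each to one of the three main terms on the right-hand side of \eqref{asymptpde} or placing it in $\mathcal{R}$. The classification is dictated by the space-time resonance structure of the four phases $\Phi_{\kappa_1\kappa_2}$: $\Phi_{--}$ is elliptic; $\Phi_{\pm,\mp}$ vanish to first order only in certain angular directions (yielding a near-diagonal ``low-output'' contribution); and $\Phi_{++}$ vanishes on a full one-dimensional collinear set along the line $\R\theta$. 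Heuristically, the $h_\theta$ term will capture the low-output $(\pm,\mp)$ interactions, the radial convolution will come from two-dimensional transverse stationary phase near the collinear set of $\Phi_{++}$ together with its antiparallel companions in $\Phi_{\pm,\mp}$, and the phase shift $-i\rho C_\theta F_\theta$ will emerge from a normal form in $s$ of the leftover non-stationary parts.

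\textbf{Non-resonant piece and the $h_\theta$ term.} Because $\Phi_{--}\gtrsim|\eta|+|\xi-\eta|$, the identity $e^{is\Phi_{--}}=\partial_s(e^{is\Phi_{--}}/(i\Phi_{--}))$ expresses $I_{--}$ as $\partial_t R'+R_1$, where $R'$ is the original integral with the weight $1/\Phi_{--}$ (bounded in $X$ by $\e^2\langle t\rangle^{-\gamma}$) and $R_1$ is the cubic obtained by substituting Duhamel for $\partial_s\widehat f$ on both inputs (bounded by $\e^2\langle t\rangle^{-1-\gamma}$ using the improved Fourier bounds of Section \ref{secImp}), so $I_{--}\in\mathcal R$. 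For the pair $(+,-)$ and $(-,+)$ the change of variables $\eta\mapsto-\eta$ equates them, so I work with $(-,+)$, whose phase expands for $|\xi|\ll|\eta|$ as $\Phi_{-+}(\rho\theta,r\phi)=\rho(1-\theta\cdot\phi)+O(\rho^2/r)$. The cutoff $\varphi_{\leq 0}(\rho\langle t\rangle^{7/8})$ restricts to the regime where the Taylor error is negligible; there $\widehat f_-(\xi-\eta)\widehat f_+(\eta)\simeq|F_\phi(r)|^2$ and $|\xi-\eta|/|\eta|\simeq 1$, producing exactly $\tfrac{1}{2(2\pi)^{3/2}}\varphi_{\leq 0}(\rho\langle t\rangle^{7/8})h_\theta(t,\rho)$. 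On the complement $\rho\langle t\rangle^{7/8}\gtrsim 1$, Lemma \ref{LemAIBP} localizes to nearly parallel $\theta,\phi$ and the residual is acceptable.

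\textbf{Collinear interactions and the radial convolution.} The three sign combinations $(+,+)$, $(-,+)$, $(+,-)$ each carry a one-dimensional stationary set along $\R\theta$: $(+,+)$ on $0\le r\le\rho$, $(-,+)$ on $r\ge\rho$, and $(+,-)$ on $r\le 0$ (via $F_\theta(-r)=\overline{F_\theta(r)}$), so together they parametrize all of $r\in\R$. For each I apply Lemma \ref{LemAIBP} to restrict to an angular tube around $\theta$ of width $\sim t^{-1/2+\delta}$, parametrize $\eta=r\theta+y$ with $y\perp\theta$, and Taylor-expand
\[
\Phi_{++}(\rho\theta,r\theta+y)\;\approx\;-\frac{\rho}{2r(\rho-r)}\,|y|^2
\]
(with analogous quadratic forms on the antiparallel branches). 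Two-dimensional Gaussian stationary phase in $y$ yields a prefactor $\tfrac{2\pi}{it}\tfrac{r(\rho-r)}{\rho}$, which combined with the symbol $|\xi-\eta|/|\eta|=(\rho-r)/r$ and the product $F_\theta(\rho-r)F_\theta(r)$ gives exactly
\[
\frac{1}{it}\cdot\frac{1}{4(2\pi)^{1/2}}\int_\R \frac{(\rho-r)^2}{\rho}\,F_\theta(t,\rho-r)\,F_\theta(t,r)\,\mathrm{d}r.
\]
The stationary-phase remainder and the angular-truncation error are both $R_1$-type under the bounds of Section \ref{secImp}.

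\textbf{Phase shift and main obstacle.} The term $-i\rho C_\theta(t)F_\theta(t,\rho)$ arises from the \emph{complement} of the low-frequency window in Step 2 after one normal form in $s$. On $\rho\langle t\rangle^{7/8}\gtrsim 1$, writing $e^{is\Phi_{-+}}=\partial_s(e^{is\Phi_{-+}}/(i\Phi_{-+}))$ converts $I_{-+}$ into a $\partial_t(\cdot)$ boundary piece plus a cubic coming from substituting Duhamel for $\partial_s\widehat f$. Among the cubic configurations, the one in which the freshly produced $\widehat f$ lands back at the output frequency $\xi$ while its two inner frequencies form an $h_\phi$-like pair produces a scalar multiplier of $F_\theta(\rho)$; reorganizing in polar coordinates, this multiplier matches $-i\rho B_\theta(t)$. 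The subtraction $C_\theta=B_\theta-\tfrac{1}{32\pi^2 t}\int \varphi_{\leq -10}(\rho\langle t\rangle^{7/8})H_\theta\,\mathrm{d}\rho$ is designed to cancel a logarithmic divergence as $\rho\to 0$ that would otherwise preclude placing the residual in $\mathcal R$. The main obstacle is exactly this identification: cleanly isolating the pointwise multiplier from a genuine cubic remainder, uniformly in all $\Omega^\ell$ with $|\ell|\leq N_1$ and across Littlewood--Paley scales $k$, requires tracking a delicate cancellation between the $(+,-)$ and $(-,+)$ contributions, and the precise form of the $1/t$ subtraction in $C_\theta$ must be pinned down to high accuracy before the rest is visibly acceptable in the $X$-norm.
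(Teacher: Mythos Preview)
Your overall decomposition into elliptic $/$ low-output $/$ collinear pieces tracks the paper's structure through Lemmas \ref{Lem--}, \ref{LemL1}, and \ref{LemHH1}, and your handling of $I_{--}$ and of the $h_\theta$ term at low output is essentially correct. The genuine gap is in your Step~4: the mechanism you propose for the phase shift $-i\rho C_\theta F_\theta$ is not how this term actually arises, and the argument you sketch does not work.

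You write $e^{is\Phi_{-+}}=\partial_s(e^{is\Phi_{-+}}/(i\Phi_{-+}))$ on the region $\rho\langle t\rangle^{7/8}\gtrsim 1$ and claim the resulting cubic (after substituting Duhamel for $\partial_s\widehat f$) collapses to a scalar multiplier of $F_\theta(\rho)$. But $\Phi_{-+}(\xi,\eta)=|\xi|+|\xi-\eta|-|\eta|$ vanishes on the entire ray $\eta=\lambda\xi$, $\lambda\geq 1$, which is precisely the collinear set you just used in Step~3, so the weight $1/\Phi_{-+}$ is singular there and the normal form is ill-defined without further localization. If you first localize away from that set, the remaining cubic terms are genuine remainders and do \emph{not} produce $B_\theta$. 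In the paper the shift has an entirely different origin: one splits the inputs $f=f^0+f^1$ by $\varphi_{\leq -10}(\cdot\langle t\rangle^{7/8})$ and isolates the \emph{high--low} interactions $I^{m,0}_{\kappa_1\kappa_2}$ (high output, one input $f^0$ at frequency $\lesssim\langle t\rangle^{-7/8}$). For these one approximates $\widehat f(\xi-\eta)\approx\widehat f(\xi)$ and $\Phi\approx \xi\cdot\eta/|\xi|-\kappa_2|\eta|$, factoring out $|\xi|\widehat f(\xi)$; then one substitutes for the low-frequency input the approximation $\widehat f(\eta)\approx -\tfrac{1}{2(2\pi)^{3/2}}H_{\arg\eta}(t,|\eta|)$ already established in Lemma~\ref{LemL1} (equation \eqref{LemL1.3}). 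This directly yields $-i\rho B_\theta(t)F_\theta(t,\rho)$ from $I^{m,0}_{++}-I^{m,0}_{+-}$; no time normal form is involved (Lemma~\ref{LemHL1}). Your scheme has no analogue of this high--low step, and your collinear stationary phase in Step~3 implicitly requires $|r|,|\rho-r|$ bounded below (the transverse Hessian degenerates otherwise), so it does not cover this regime either.

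A second, related omission: the collinear stationary phase in Step~3 is carried out in the paper with the \emph{high} parts $G_\theta$ (Lemma~\ref{LemHH1}), not $F_\theta$. The passage $G_\theta\to F_\theta$ in the convolution is a separate step (Lemma~\ref{LemAux}) and produces exactly the extra term $-i\rho F_\theta\cdot\tfrac{1}{32\pi^2 t}\int\varphi_{\leq -10}H_\theta\,\mathrm{d}r$, which combines with $B_\theta$ to give $C_\theta$. This is the actual source of the $1/t$ subtraction in \eqref{Mainth13}; it is not a device to ``cancel a logarithmic divergence as $\rho\to 0$'' but a concrete correction arising from the low-frequency part of the profile in the one-dimensional convolution.
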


The proof of Theorem \ref{Mainth1} will follow from the approximation Lemmas \ref{Lem--}-\ref{LemAux},
which are stated in \ref{secMainLemmas} and proven in the rest of Section \ref{SecProof1}.
Combining Theorem \ref{Mainth1} with additional linear and nonlinear estimates,
we will obtain the following result:

\begin{theorem}[Nonlinear Scattering and Asymptotics]\label{Mainth2}

Consider a solution $u$ of \eqref{Wave}. For $||x|-t|\lesssim t^{\gamma/10}$ the asymptotic behavior of $u(t,x)$ as $t\to \infty$, is as follows:
there exists a function (``the asymptotic profile'')
$\widetilde{U}=\widetilde{U}_\theta(s,q)$, satisfying
\begin{align}
\label{Mainth21}
\sup_{|\alpha|\leq 14,|\beta|\leq N_1}\|\nabla^{\alpha}\Omega^{\beta}\partial_q\widetilde{U}_\theta(s)\|_{L_q^2\cap L_q^\infty}
  \lesssim \e e^{C\e s},
\end{align}
and the equation
\begin{align}
\label{Mainth22}
\partial_s\partial_q\widetilde{U}_\theta+\widetilde{U}_\theta\cdot\partial_q^2\widetilde{U}_\theta=0,
\end{align}
and a function $D_\theta(t)$ (the ``shift'') satisfying
\begin{align}
\label{Mainth23}
\sup_{|\alpha|\leq N_1}|\partial_\theta^\alpha D_\theta(t)|\lesssim \varepsilon(1+|t|)^{C\varepsilon},
\end{align}
such that
\begin{align}
\label{Mainth24}
\sup_{|\alpha|\leq 10,|\beta|\leq N_1} \Big|\nabla^{\alpha}\Omega^{\beta}\partial \big[ u(t,x)-
  2(2\pi)^{1/2}\frac{1}{|x|} \wt{U}_\theta(\log t,|x|-t+D_\theta(t)) \big] \Big| \lesssim \e t^{-1-\gamma/30},
\end{align}
where $\theta = x/|x|$. 
\end{theorem}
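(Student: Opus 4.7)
The plan is to derive Theorem \ref{Mainth2} from the Fourier-side asymptotic PDE \eqref{asymptpde} by a phase modulation in $\rho$, followed by inverse Fourier transform and a stationary-phase evaluation of $u(t,x)$ near the light cone. I first check that $C_\theta(t)$ is real (using $\Re$ in the definition of $B_\theta$ and the Hermitian symmetry $h_\theta(s,-\rho) = \overline{h_\theta(s,\rho)}$, which makes the correction term in $C_\theta$ real as well). Then set
\[
D_\theta(t) := -\int_0^t C_\theta(s)\,\mathrm{d}s, \qquad G_\theta(t,\rho) := e^{-i\rho D_\theta(t)}F_\theta(t,\rho),
\]
so that the nonintegrable phase term $-i\rho C_\theta F_\theta$ is absorbed. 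Using the identity $F_\theta(\rho-r)F_\theta(r) = e^{i\rho D_\theta}G_\theta(\rho-r)G_\theta(r)$ and $|F_\phi|^2 = |G_\phi|^2$ (so $h_\theta$ is unchanged), equation \eqref{asymptpde} becomes
\[
\partial_tG_\theta = \frac{1}{2(2\pi)^{3/2}}\varphi_{\leq 0}(\rho\langle t\rangle^{7/8})e^{-i\rho D_\theta}h_\theta + \frac{1}{4it(2\pi)^{1/2}}\int_\R\frac{(\rho-r)^2}{\rho}G_\theta(\rho-r)G_\theta(r)\,\mathrm{d}r + \widetilde{\mathcal R}.
\]
The bound \eqref{Mainth23} on $D_\theta$ follows from the quantitative estimate $|\partial_\theta^\alpha C_\theta|\lesssim \e^2(1+t)^{-1+C\e}$, itself obtained from the $X$-norm controls on $F_\theta$ and the bounds on $h_\theta,H_\theta$ from the preparatory lemmas.

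Extending $G_\theta$ to negative frequencies by $G_\theta(t,-\rho) := \overline{G_\theta(t,\rho)}$, define the asymptotic profile
\[
\widetilde U_\theta(s,Q) := \frac{1}{4\pi}\Re\int_0^\infty e^{i\rho Q}G_\theta(e^s,\rho)\,\mathrm{d}\rho
\]
(up to an absolute normalization fixed by the matching in the next step). With $s = \log t$, $\partial_s = t\partial_t$, taking $\partial_s\partial_Q\widetilde U$ and applying the convolution identity $(A*B)^{\vee} = (2\pi)^{1/2}\check A\check B$ to $A(\rho)=\rho^2 G_\theta(\rho)$, $B = G_\theta$ converts the main bilinear term into a quadratic expression matching $-\widetilde U_\theta\partial_Q^2\widetilde U_\theta$; this yields \eqref{Mainth22} up to acceptable errors. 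The bound \eqref{Mainth21} on $\partial_q\widetilde U$ then follows from Plancherel for the $L_q^2$ part and from the $X$-norm pointwise bound on $\rho G_\theta$ for the $L_q^\infty$ part, with the exponential growth $e^{C\e s}$ reflecting the $(1+|t|)^{C\e}$ growth of $F_\theta$ itself.

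For the representation \eqref{Mainth24} I apply stationary phase to
\[
u(t,x) = (2\pi)^{-3/2}\int_0^\infty\!\!\rho^2\,\mathrm{d}\rho\int_{\mathbb{S}^2} e^{i\rho|x|\theta\cdot\phi}\cdot\frac{i}{2\rho}\bigl[e^{-it\rho}F_\phi(\rho) - e^{it\rho}\overline{F_{-\phi}(\rho)}\bigr]\mathrm{d}\phi,
\]
where the formula for $\widehat u(\rho\phi)$ follows from $u = -|\nabla|^{-1}\Im v$ and $\widehat v = e^{-it|\xi|}\widehat f$. The critical points $\phi = \pm\theta$ on the sphere produce, after cancellations,
\[
u(t,x) = \frac{1}{(2\pi)^{1/2}|x|}\Re\int_0^\infty e^{i\rho(|x|-t)}F_\theta(t,\rho)\,\mathrm{d}\rho + \text{error},
\]
with error composed of the angular stationary-phase remainder $O((\rho|x|)^{-2})$ (requiring $\Omega^\ell F_\phi$ control from the $X$-norm) and a non-stationary piece of phase $e^{\pm i\rho(|x|+t)}$, handled by integration by parts in $\rho$ using that $|x|+t \gtrsim t$. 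Substituting $F_\theta = e^{i\rho D_\theta}G_\theta$ and comparing with the definition of $\widetilde U$ produces the shifted argument $|x|-t+D_\theta(t)$ and matches the prefactor $2(2\pi)^{1/2}$. Derivatives $\nabla^\alpha\Omega^\beta\partial$ are obtained by commutation: $\Omega$ acts on the $\theta$-dependence of $\widetilde U$ and on $F_\phi$ through $\Omega^\ell$ (controlled by the $X$-norm with $|\ell|\leq N_1$), while $\nabla^\alpha\partial$ pulls out factors of $\rho$ from the oscillatory integrand, absorbed by the weight $2^{k+15k^+}$ in \eqref{Xnorm} together with the restriction $||x|-t|\lesssim t^{\gamma/10}$.

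The most delicate point will be showing that the low-frequency term $\frac{t}{2(2\pi)^{3/2}}\varphi_{\leq 0}(\rho\langle t\rangle^{7/8}) e^{-i\rho D_\theta}h_\theta$ appearing in $\partial_s G_\theta$ genuinely falls into the acceptable error rather than contributing to the leading dynamics, since the prefactor $t$ is a priori dangerous. The rescue comes from the tight localization to $\rho\lesssim t^{-7/8}$ combined with cancellations in $h_\theta$ itself, arising from the phase oscillation $e^{it\rho[1-\theta\cdot\phi]}$ and the Hermitian structure in $\rho$; these have already been exploited in the construction of $H_\theta, B_\theta, C_\theta$ in Theorem \ref{Mainth1} but need to be tracked carefully once more on the physical side. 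A secondary difficulty is closing all the derivative estimates in \eqref{Mainth24} uniformly within the sharp loss $\e t^{-1-\gamma/30}$, which is strictly weaker than the $\e^2 t^{-1-\gamma}$ margin in Definition \ref{def}; this requires combining the $X$-norm bounds with weighted $L^2$ bounds on $F_\theta$ and carefully handling commutators of $\Omega^\beta$ with $\nabla$ in the stationary-phase representation.
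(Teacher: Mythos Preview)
Your construction has a structural gap: the function you call $\widetilde U_\theta$ is, up to normalization, exactly what the paper calls $U_\theta$ --- the inverse Fourier transform in $\rho$ of the phase-shifted profile $e^{-i\rho D_\theta}F_\theta$. But this object does \emph{not} satisfy \eqref{Mainth22} exactly; as you yourself note, it satisfies it only ``up to acceptable errors,'' namely
\[
\partial_s\partial_q U_\theta + U_\theta\,\partial_q^2 U_\theta = \mathcal{E}_\theta(s,q),
\]
where $\mathcal{E}_\theta$ carries both the residual $\mathcal{R}$ from Theorem~\ref{Mainth1} and the low-frequency $h_\theta$ contribution. The statement of Theorem~\ref{Mainth2}, however, asserts the existence of an \emph{exact} solution $\widetilde U_\theta$ of the unperturbed asymptotic PDE together with the approximation \eqref{Mainth24}. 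You never explain how to pass from the perturbed solution $U_\theta$ to a genuine solution of \eqref{Mainth22}; ``up to acceptable errors'' is a strictly weaker conclusion and does not close the theorem.

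The paper supplies this missing step via a method-of-characteristics argument (Proposition~\ref{finaldata}). One flows along the characteristics $\partial_s z_\theta(s,q)=U_\theta(s,z_\theta(s,q))$, shows that $(\partial_q U_\theta)(s,z_\theta(s,q))$ has a limit $V_\infty(q)$ as $s\to\infty$ (this is where the integrability in $s$ of $\mathcal{E}_\theta$ is actually used), and then \emph{reconstructs} a new flow $\widetilde z_\theta$ with $\log\partial_q\widetilde z_\theta = sV_\infty + E_\infty$ exactly, from which $\widetilde U_\theta$ is defined by $\widetilde U_\theta(s,\widetilde z_\theta(s,q))=\partial_s\widetilde z_\theta(s,q)$. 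This $\widetilde U_\theta$ solves \eqref{Mainth22} identically, and the closeness $\partial_q U_\theta - \partial_q\widetilde U_\theta = O(\e^2 e^{-\gamma s/20})$ on the relevant $q$-range follows by comparing the two flows. None of this is visible in your Fourier-side construction, and it cannot be replaced by a direct inverse transform: the nonlinear term $U\,\partial_q^2 U$ does not allow you to simply absorb an inhomogeneity of size $\e^2 e^{-\gamma s}$ into a redefinition of $\widetilde U$ without running the characteristic ODE argument.

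A secondary point: your treatment of the $h_\theta$ term remains unresolved --- you flag it as delicate and then defer to unspecified ``cancellations.'' In the paper this term is simply part of $\mathcal{E}_\theta$, and the required bound $\|\nabla^\alpha\Omega^\beta\mathcal{E}_\theta(s)\|_{L_q^2\cap L_q^\infty}\lesssim \e^2 e^{-\gamma s}$ follows from the pointwise estimate $|\Omega^\alpha h_\theta(t,\rho)|\lesssim \rho^{-1}(1+|t|)^{-1+C\e}$ combined with the cutoff $\rho\lesssim t^{-7/8}$; no further cancellation is needed once one works in physical $q$-space norms rather than the $X$-norm. Your stationary-phase reduction of $u(t,x)$ is essentially along the right lines and matches the paper's Proposition~\ref{transfer0}.
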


\smallskip
Let us make a few comments.
\setlength{\leftmargini}{2em}
\begin{itemize}

\smallskip
\item[-] Our result applies also to the general case of the equation \eqref{WNW1}
with $g_{\alpha\beta}$ smooth and $g_{\alpha\beta}(0) = 1$.
For simplicity we deal with \eqref{Wave} but it will be apparent that all our arguments apply verbatim to \eqref{WNW1}.
The only changes involve the dependence on the ``parameter'' $\theta$ in the formulas of Theorems \ref{Mainth1} and \ref{Mainth2}. 

\smallskip
\item[-] The asymptotic formula \eqref{Mainth22}--\eqref{Mainth24} follows from the asymptotic PDE \eqref{asymptpde}, whose derivation occupies most of our paper.
It is important to observe that the PDE \eqref{asymptpde}, with the explicit formulas \eqref{Mainth11}--\eqref{Mainth12},
actually contains more precise information about the asymptotic behavior of solutions of \eqref{Wave}.
However, in order to be able to fully exploit this, and obtain asymptotics inside the light cone for the solution $u$ itself,
it seems that one would need to strengthen the notion of acceptable remainder, and prove even stronger bounds than the ones in this paper.
This we plan to do in future work.

\smallskip
\item[-] Away from the light cone, when $||x|-t|\gtrsim (1+|t|)^{\gamma/10}$, one has
\[
\sup_{|\alpha|\leq 10,|\beta|\leq N_1}|\nabla^{\alpha}\Omega^{\beta}\partial u(t,x)|\lesssim \varepsilon(1+|t|)^{-1-\gamma/30},
\]
as a consequence of the standard Klainerman-Sobolev embedding, and the weighted bounds \eqref{Ali0}.

\smallskip
\item[-] Note that \eqref{Mainth24} with $\alpha=\beta=0$ implies the sharp $t^{-1}$ decay for $\partial u$ close to the light cone.
In particular, it follows that we can close energy estimate (with small-in-time growth) in standard Sobolev spaces.
Weighted energy estimates would instead require additional arguments, see also the comment below.

\smallskip
\item[-] As already mentioned, the construction of global solutions for \eqref{Wave} for small $C_0^\infty$ data was done by Lindblad \cite{Lindblad1}
in the radial case. Global solutions where obtained by Alinhac in \cite{Alinhac1}, and by Lindblad \cite{Lindblad2}
for the more general case of \eqref{WNW1}.
To obtain our results on asymptotics and nonlinear scattering we are going to use
some of the a priori bounds on weighted Sobolev norms from \cite{Lindblad2},
more precisely only those involving one single scaling vectorfield and several rotation vectorfields.


\smallskip
\item[-] We did not optimize the choice of the parameters in \eqref{param},
nor the fraction of $\gamma$ in \eqref{Mainth24}, but prioritized convenience instead.
For similar reasons, the exponent $7/8$ appearing in the definitions \eqref{Mainth12}--\eqref{Mainth13} 
is a convenient number slightly smaller than $1$.


\end{itemize}

\medskip
\subsection{Ideas and structure of the proof}\label{ssecStra} 
We now discuss the strategy for our proofs and some of the main features of our arguments.
We focus on describing the main intuition behind the derivation of the asymptotic system in Theorem \ref{Mainth1}
and the construction of the asymptotic profile in Theorem \ref{Mainth2}.
For the sake of explanation we will disregard many technical aspects of the proofs. 
The initial approach follows the general scheme of recent works on global solutions of (quasilinear) evolution systems, see \cite{GNT1,GMS2,IP2,IoPu2}.
In the context of wave equation this general framework was employed by Shatah and the second author \cite{nullcondition,nullconditionbumi}.
Let $v$ and $f$ be defined as in \eqref{profile}.
For sufficiently regular and localized solutions, one has the following linear asymptotic for $|x| \approx t$:
\begin{align}
\label{vlin}
v(t,x) \approx \frac{c}{|x|} \int_{0}^\infty e^{i\rho(t-|x|)} \, \rho \, \what{f}\big(t,\rho x/|x|\big) \, \mathrm{d}\rho
\end{align}
up to faster decaying terms; see Proposition \ref{transfer0} for a precise formula.
Motivated by this, we look at the evolution of $\what{f}$, which reads, see \eqref{duhamel2},
\begin{align}
\label{Stra1}
& \widehat{f}(t,\xi) = \widehat{f}(0,\xi) + \sum_{\kappa_1,\kappa_2\in\{+,-\}} \frac{\kappa_1\kappa_2}{4 (2\pi)^{3/2}}
  \int_0^t \int_{\R^3} e^{it\Phi_{\kappa_1\kappa_2}(\xi,\eta)}\frac{|\xi-\eta|}{|\eta|}
  \widehat{f_{\kappa_1}}(t,\xi-\eta) \widehat{f_{\kappa_2}}(t,\eta)\, \mathrm{d}\eta \mathrm{d}s,
\end{align}
where we denote $f_+ = f$, $f_- =\bar{f}$, and
\begin{align}
\label{Stra2}
\Phi_{\kappa_1\kappa_2}(\xi,\eta) := -|\xi|+\kappa_1|\xi-\eta|+\kappa_2|\eta|.
\end{align}

For simplicity, let us focus on one of the terms in the nonlinear expression \eqref{Stra1}-\eqref{Stra2},
the term with $(\kappa_1\kappa_2) = (+-)$. In other words, let us look at the equation
\begin{align}
\label{Stra3}
\partial_t \what{f}(t,\xi) \approx \int_{\R^3} e^{it(-|\xi|+|\xi-\eta|-|\eta|)} \frac{|\xi-\eta|}{|\eta|} \widehat{f}(t,\xi-\eta) \widehat{\bar{f}}(t,\eta)\, \mathrm{d}\eta
   \, + \, \mbox{similar terms}.
\end{align}
One can verify that the interacting waves have the same velocity, and therefore travel in the same direction, for frequencies in the set
\begin{align}
\label{Stra4}
\{ (\xi,\eta) \in \R^3 \times \R^3: \, \xi = \lambda \eta, \quad \lambda \leq 1 \}.
\end{align}
For such frequencies the integral \eqref{Stra3} presents no oscillations in $\eta$, $\nabla_\eta \Phi_{+-} = 0$.
In addition, on the subset
\begin{align}
\label{Stra5}
\{ (\xi,\eta) \in \R^3 \times \R^3: \, \xi = \lambda \eta, \quad \lambda \leq 0 \},
\end{align}
there are no oscillations over time, as $\Phi_{+-} = 0$ as well.
Interactions between waves with parallel frequencies in the set \eqref{Stra5} are call ``resonant'' and are the most problematic to handle.
We refer the reader to the introduction of \cite{nullcondition} for a longer discussion along these lines.

Our general approach to obtaining bounds on solutions will be to first restrict the integral in \eqref{Stra3}
to the parallel frequencies in the set \eqref{Stra5} and then analyze the resulting contribution.
We remark that such contribution would vanish if (NC) was satisfied.
One of the things we are going to show is that, when the (WNC) is satisfied, the asymptotic PDE \eqref{asyPDE0}  
or, more precisely, the equation \eqref{asymptpde},
emerges from the reduction of \eqref{Stra1}-\eqref{Stra2} to resonant interactions. 
However, in order to be able to see the full asymptotic dynamics there are several important aspects one needs to deal with.
In the remaining of this section we give a schematic description of our main steps.

\smallskip
{\it Step 1: Improved bounds by iteration}.
We begin by translating the energy bounds from \cite{Lindblad2} on weighted $L^2$-norms of solutions of \eqref{WNW00} in terms of the profile $f$.
Roughly speaking these give us a bound of the form
\begin{align}
\label{Stra10}
{\| (1 + |x|) |\nabla| f \|}_{L^2} + {\| \Omega^N  |\nabla| f \|}_{L^2} \lesssim \e (1+|t|)^{C\e},
\end{align}
where $\Omega$ is any rotation vectorfield of the form $x_i \partial_{x_j} - x_j \partial_{x_i} $, $N$ is a sufficiently large number,
and $\e \ll 1$ is the size of the initial data.
Then, we obtain a number of stronger bounds by iterating the basic bound \eqref{Stra10} through Duhamel's formula \eqref{Stra2}-\eqref{Stra3}.
These bounds give good control on $\what{f}$, as well as improvements for localized versions of it.
See Lemmas \ref{LemBB}, \ref{lemImp} and \ref{Lemspe}. 
A key tool that we often use is the angular integration by parts Lemma \ref{LemAIBP}.


\smallskip
{\it Step 2: Low frequency outputs}.
It is important to observe that one cannot restrict the interactions to parallel frequencies when these frequencies are too small.
Therefore, we first look at the contribution of very small frequencies outputs $|\xi| \lesssim (1+|t|)^{-1+}$.
In this case, using the weighted and pointwise bounds obtained in the previous step, we show that
\begin{align}
\label{Stra11}
\partial_t \what{f}(t,\xi) \approx \frac{1}{t\,|\xi|}\int_{\R^3} |\widehat{f}(t,\eta)|^2 \, \mathrm{d}\eta + R := h(t,|\xi|) + R
\end{align}
where $R$  denotes here, and in what follows, a remainder which decays fast enough in time in a suitably strong norm, see Definition \ref{def}.
In other words, we see that low frequency outputs contribute a bulk term $h$ such that $|\xi| h(t,|\xi|)$ decays at a critical $t^{-1}$ rate.

\smallskip
{\it Step 3: High-low interactions and phase shift}.
The next step is to measure the feedback contribution of the low frequency bulk $h$ to the nonlinear interactions.
We then look at \eqref{Stra3} when $|\eta| \lesssim (1+|t|)^{-1+} \ll |\xi|$.
Formally replacing $\what{f}(\eta)$ by the contribution coming from $\int_0^t h(s,|\eta|) \, \mathrm{d}s =: H(t,|\eta|)$,
and approximating $\what{f}(\xi-\eta)$ by $\what{f}(\xi)$, we see that:
\begin{align}
\label{Stra12}
\partial_t \what{f}(t,\xi) \approx |\xi| \what{f}(\xi) \cdot i \int_{0}^\infty H(t,r) \, r \, \mathrm{d}r + R.
\end{align}
This shows that high frequencies are essentially transported by the low frequencies bulk.
Eventually we will renormalize $f$ to incorporate this contribution via a phase correction/shift, using the reality of $H$.
This is a similar phenomenon to the one occurring in scattering-critical equations such as 1d nonlinear Schr\"odinger equations \cite{HN,KP,IoPu1},
and $2$d water waves \cite{IoPu2,IoPu4}.

\smallskip
{\it Step 4: High-high interactions: Reduction to parallel frequencies}.
After treating very low frequency inputs and outputs we can isolate the contribution of parallel frequencies,
using the improved bounds obtained earlier, and angular integration by parts arguments.
Denoting spherical coordinates by $\xi = \rho \theta$, $\eta = r\phi$, with $\theta,\phi \in\mathbb{S}^2$,
we can write Duhamel's formula \eqref{Stra3} as
\begin{align}
\partial_t \what{f}(t,\xi) & \approx \int_{0}^\infty \int_{\mathbb{S}^2_\phi} e^{it(-\rho+|\rho\theta - r\phi|- r)}
  \frac{|\rho\theta - r\phi|}{r} \widehat{f}(t,\rho\theta - r\phi) \widehat{\bar{f}}(t,r\phi)\, r^2 \, \mathrm{d}r \mathrm{d}\phi + R,
\end{align}
where the integral over $\mathbb{S}^2_\phi$ is supported in a small cap around $\theta$.
Then, a stationary phase argument on the sphere gives
\begin{align}
\begin{split}
\partial_t \what{f}(t,\xi) \approx \int_0^\infty \Big[ \int_{\mathbb{S}^2} e^{it(-\rho + |\rho\theta + r\phi|-r)} \, \mathrm{d}\phi \Big] (r-\rho)
  \widehat{f}(t, (\rho-r)\theta) \bar{\widehat{f}}(t,-r\theta)\, r \, \mathrm{d}r + R
\\
\approx \int_0^\infty \Big[ \frac{1}{it} \frac{(\rho-r)}{r\rho} \Big] (r-\rho)
  \widehat{f}(t, (\rho-r)\theta) \bar{\widehat{f}}(t,-\rho\theta)\, r \, \mathrm{d}r + R.
\end{split}
\end{align}
Thus, we have arrived at
\begin{align}
\label{Stra15}
\partial_t \what{f}(t,\rho \theta) &
  \approx \frac{1}{it \rho} \int_0^\infty (r-\rho)^2
  \widehat{f}(t, (\rho-r)\theta) \overline{\widehat{f}}(t,-\rho\theta)\, \, \mathrm{d}r + R.
\end{align}

\smallskip
{\it Step 5: The asymptotic PDE}.
Similar arguments to those sketched above for \eqref{Stra3} can be used for the other terms in \eqref{Stra1}-\eqref{Stra2}.
Gathering all the contributions, and defining the ``radial profile''
\begin{align}\label{Stra20}
F_\theta(t,\rho) & := \widehat{f}(t,\rho\theta), \quad \rho\geq0,\,\theta \in \mathbb{S}^2,
\qquad F_\theta(t,\rho) = \overline{F_\theta(t,-\rho)}, \quad \rho <0,
\end{align}
one obtains 
\begin{align}
\label{Stra21}
\partial_t F_\theta(t,\rho) = -i\rho C_\theta(t) F_\theta(t,\rho) + \frac{1}{i4(2\pi)^{1/2}t}
  \int_\R \frac{(\rho-r)^2}{\rho} F_\theta(t,\rho-r)F_\theta(t,r) \, \mathrm{d}r + R,
\end{align}
where 
$C_\theta(t)$ is a real-valued nonlinear function of $F_\theta$ which takes into account
the low frequencies contributions \eqref{Stra11}-\eqref{Stra12},
and we have disregarded some less important terms. See Theorem \ref{Mainth1} for details.
We then define $U_\theta = U(s,q;\theta)$ by setting
\begin{align}
\label{Stra22}
(\mathcal{F}_q U_\theta)(s,\rho) := F_\theta(e^s,\rho) e^{-i \rho D_\theta(s)},
\qquad D_\theta(s) := \int_0^{e^s} C_\theta(s') \, \mathrm{d}s',
\end{align}
and observe that equation \eqref{Stra21} is the $1$ dimensional Fourier transform of the equation
\begin{align}
\label{Stra23}
\partial_t \partial_q U_\theta = U_\theta \, \partial_q^2 U_\theta + \what{R}(e^s,q).
\end{align}
This is a perturbation of the asymptotic PDE \eqref{asyPDE0}. 
An important aspect of our analysis is the control of all the remainders $R$ in a proper ``critical norm'',  see \eqref{defRem}, 
to guarantee a strong enough proof of convergence to the asymptotic system.

\smallskip
{\it Step 6: Asymptotics}.
The last step consists in rigorously proving convergence of $U_\theta$ to an asymptotic profile, which will still depends on time $s$, and deduce from there asymptotics for $u$.
Combining the solution of the perturbed asymptotic system \eqref{Stra23} for $U_\theta$ with
the improved bounds obtained by analyzing Duhamel's formula \eqref{Stra1},
we explicitly construct an asymptotic profile $\wt{U}_\theta = \wt{U}(s,q,\theta)$
that solves the unperturbed asymptotic system \eqref{asyPDE0} together with suitable initial conditions, and such that
\begin{align}
\label{Stra29}
\partial_q U_\theta - \partial_q \wt{U_\theta} \stackrel{s\rightarrow \infty}{\longrightarrow} 0,
\end{align}
with the convergence happening in a suitable norm.
Combining this with identities \eqref{Stra20} and \eqref{Stra22}
and the linear asymptotic \eqref{vlin}, one obtains
\begin{align}
\label{Stra30.5}
v(t,x) \approx \frac{c}{|x|} \partial_q \wt{U} \big(s, |x|-t- D_{x/|x|}(\log t); x/|x| \big).
\end{align}
We refer the reader to 
Propositions \ref{finaldata} and \ref{transfer0}, and the formulas \eqref{defU}-\eqref{defU'} for more details on the asymptotic profile.

\bigskip
\section{A priori bounds and improvements}\label{secPre}

In this section we gather several supporting estimates that we are going to use in the proof of Theorem \ref{Mainth1} in the next section.
We begin by recalling the weighted energy bounds from \cite{Lindblad2} and state some
basic consequences in terms of the profile \eqref{profile} in Lemmas \ref{lemEE} and \ref{LemBB}.
We then prove a general results about angular integration by parts in oscillatory integrals
in Lemma \ref{LemAIBP}; this will be used repeatedly in rest of this section and in Section \ref{SecProof1} 
to restrict many bilinear interactions to nearly parallel frequencies.
Finally, in Subsection \ref{secImp} we improve in various ways the basic energy bounds by iterating Duhamel's formula, 
and obtain several key bounds on the Fourier transform of $f$, see Lemmas \ref{lemImp} and \ref{Lemspe}.

\medskip
\subsection{Basic a priori bounds and norms}\label{secBB}
We will use the following energy bounds established by 
Lindblad \cite{Lindblad2}:
\begin{equation}
\label{Ali0}
 \sup_{|\ell|+|\ell'|\leq N} {\| \nabla^{\ell}\Omega^{\ell'} \partial u(t) \|}_{L^2}
  + \sup_{|\ell|+|\ell'|\leq N} {\|\nabla^{\ell}\Omega^{\ell'} S\partial u(t) \|}_{L^2} \leq \e {(1+|t|)}^{C\e},
\end{equation} where $\Omega=(x_i\partial_j-x_j\partial_i)$, $S=t\partial_t+x\cdot\partial_x$
and $\partial=(\partial_t,\nabla)=(\partial_t,\partial_x)$.
Moreover we will use the decay information, see \cite[(6.8)]{Lindblad2},
\begin{equation}
\label{Aliinfty}
\begin{split}
\sup_{|\alpha|+|\beta|\leq N}| \partial_x^{\alpha}\Omega^{\beta} u(t,x)| &\lesssim \e(1+|t|)^{-1+C\e}(1+||x|-t|)^{C\e},
\\
\sup_{|\alpha|+|\beta|\leq N}| \partial_x^{\alpha}\Omega^{\beta} \Delta u(t,x)| &\lesssim\e(1+|t|)^{-1+C\e}(1+||x|-t|)^{-2+C\e}.
\end{split}
\end{equation}


This first Lemma translate the weighted energy bounds on the profile.
\begin{lemma}[Energy bounds]\label{lemEE}
We have
\begin{align}
\label{Nrg}
\sup_{|\ell|+|\ell'|\leq N} {\| \nabla^{\ell}\Omega^{\ell'}f(t) \|}_{L^2} + \sup_{|\ell|+|\ell'|\leq N} {\| \nabla^{\ell}\Omega^{\ell'}Sf(t) \|}_{L^2}
  \lesssim \e {(1+|t|)}^{C\e},
\end{align}
where $f$ is the profile for $(\partial_t - i|\nabla|)u$ defined in \eqref{profile}. Moreover we have the bound
\begin{align}
\label{aprioriE}
\sup_{(j,k)\in\mathcal{J}}\sup_{|\ell|+|\ell'|\leq N-2} 2^{j+k} 2^{|\ell|k^+} \|\Omega^{\ell'}Q_{jk}f(t)\|_{L^2} \leq \e {(1+|t|)}^{C\e}.
\end{align}

\end{lemma}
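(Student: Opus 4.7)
\smallskip

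The plan is to translate the weighted energy bounds \eqref{Ali0}--\eqref{Aliinfty} on $u$ into the corresponding bounds on the profile $f=e^{it|\nabla|}(\partial_t - i|\nabla|)u$, using that the semigroup $e^{it|\nabla|}$ is an $L^2$-isometry and commutes well with the symmetries $\nabla$, $\Omega$, and $S$. For \eqref{Nrg} the argument is direct: since $\Omega$ commutes with $|\nabla|$ (both being radial Fourier multipliers, up to a vector identity), $\nabla^\ell \Omega^{\ell'}f = e^{it|\nabla|}\nabla^\ell \Omega^{\ell'} v$, and $\|v\|_{L^2}\lesssim \|\partial u\|_{L^2}$ after bounding $\||\nabla|u\|_{L^2}\lesssim \|\nabla u\|_{L^2}$ by the Riesz transform. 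For the scaling piece, I first verify the commutator identity $[S, e^{it|\nabla|}] = 0$ on the Fourier side: one has $t\partial_t e^{it|\xi|}=it|\xi|e^{it|\xi|}$, which exactly cancels with $\xi\cdot \partial_\xi e^{it|\xi|}=it|\xi|e^{it|\xi|}$ in the symbol of $x\cdot \partial_x$ acting as a multiplier. Then $Sf = e^{it|\nabla|}Sv$, and using $[S,\partial_t]=-\partial_t$ and $[S,|\nabla|]=-|\nabla|$, I obtain $Sv = (\partial_t-i|\nabla|)(Su - u)$, whose $\nabla^\ell \Omega^{\ell'}$-derivative is controlled by $\|\nabla^\ell \Omega^{\ell'}\partial S u\|_{L^2} + \|\nabla^\ell \Omega^{\ell'}\partial u\|_{L^2}$, both of which are bounded by \eqref{Ali0}.

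For \eqref{aprioriE} the key is to express the Heisenberg weight $2^{j+k}$ as essentially $|x|\,|\nabla|$ and then recover a weighted $L^2$ bound on $f$ from the scaling vector field. Since $\Omega$ commutes with both $P_k$ and the radial multiplier $\varphi_j^{(k)}$, it suffices to bound $2^{j+k}2^{|\ell|k^+}\|Q_{jk}(\Omega^{\ell'}f)\|_{L^2}$. I will use $2^j\varphi_j^{(k)}(x)\lesssim \langle x\rangle$ on the support of $\varphi_j^{(k)}$ (which holds for $(j,k)\in\mathcal J$ since $|x|\gtrsim 2^{-k^-}$ there), combined with the Bernstein-type identity $2^k\|\langle x\rangle P_k g\|_{L^2}\lesssim \|\langle x\rangle \nabla g\|_{L^2} + 2^{-k^+}\|g\|_{L^2}$, where the second term comes from the commutator $[P_k,x]$, whose kernel has $L^1$-norm $O(2^{-k})$. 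The $2^{|\ell|k^+}$ factor follows because $\nabla^\ell$ acts on $P_k$ as $O(2^{|\ell|k^+})$ in $L^2$.

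It remains to show $\|\langle x\rangle\, \nabla \Omega^{\ell'}f\|_{L^2}\lesssim \e(1+t)^{C\e}$ for $|\ell'|\le N-1$. Decomposing $\nabla$ into radial and angular components, $|x|\nabla f$ can be written in terms of $x\cdot\nabla f$ and the rotational field, so it suffices to bound $\|x\cdot\nabla \Omega^{\ell'} f\|_{L^2}$. Using the identity $x\cdot\nabla f = Sf - t\partial_t f = Sf - t e^{it|\nabla|}(u\Delta u)$ and the commutation $[S,\Omega]=0$, the first term is controlled by \eqref{Nrg}. The second requires bounding $t\,\|\nabla^{\alpha}\Omega^{\ell'}(u\Delta u)\|_{L^2}$; this is where the pointwise decay \eqref{Aliinfty} is essential. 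Distributing derivatives via Leibniz and pairing each factor in $L^\infty$ with the other in $L^2$ (using the $t^{-1+C\e}$ pointwise decay of $u$ and $\Delta u$, together with the $L^2$-energy bounds), I get $t\|u\Delta u\|_{H^{N-1}_{\Omega,\nabla}} \lesssim \e^2(1+t)^{C\e}$, which is absorbed.

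The main technical obstacle will be handling the boundary case $j=\max(-k,0)$ of $\mathcal{J}$, where $\varphi_j^{(k)}=\varphi_{\le j}$ extends down to the origin so that the naive estimate $2^j\varphi_j^{(k)}\lesssim |x|$ fails at $x=0$. In that case one uses $2^j\cdot 2^k \lesssim 1$ when $k<0$ (respectively $2^j\approx 1$ when $k\ge 0$) and falls back on the plain $L^2$-bound of $P_k f$ from \eqref{Nrg}, which is the reason the weight $2^{j+k}$ is tuned precisely to this scale. A secondary nuisance is ensuring that commutators $[P_k,x]$, $[P_k,\varphi_j^{(k)}]$, and $[\nabla, e^{it|\nabla|}]$ are absorbed without loss in $\ell,\ell'$, which accounts for the two-derivative loss in $N-2$ in the statement.
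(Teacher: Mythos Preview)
Your proposal is correct and follows the same route as the paper: commutation of $e^{it|\nabla|}$ with $\nabla,\Omega,S$ for \eqref{Nrg}, and for \eqref{aprioriE} the identity $x\cdot\nabla f = Sf - t\,e^{it|\nabla|}(u\Delta u)$ together with the pointwise decay \eqref{Aliinfty} to control $\||x|\nabla\nabla^\ell\Omega^{\ell'}f\|_{L^2}$, then the angular decomposition via $\Omega$. Your treatment of the $Q_{jk}$ localization and the boundary case $j=\max(-k,0)$ is in fact more explicit than the paper's, which simply invokes ``standard harmonic analysis estimates'' at that step.
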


\begin{proof}
A direct computations shows that $e^{it|\nabla|}$ commutes with $\partial_x$, $\Omega$ and $S$, so \eqref{Ali0}) directly implies \eqref{Nrg}.
Now by \eqref{Nrg} and commutation we have
\[
\sup_{|\ell|+|\ell'|\leq N} {\| S\nabla^{\ell}\Omega^{\ell'}f(t) \|}_{L^2}
  \lesssim \e {(1+|t|)}^{C\e},
\]
and by \eqref{profileeq} and \eqref{Aliinfty} we have
\[
\|(t\partial_t)\partial_x^{\ell}\Omega^{\ell'}f(t)\|_{L^2}\lesssim t\|\nabla^{\ell}\Omega^{\ell'}(u\Delta u)\|_{L^{2}}\lesssim\e^2(1+|t|)^{C\e},
\]
for any $|\ell|+|\ell'|\leq N-2$. This implies
\[
\|(x\cdot\partial_x)\nabla^{\ell}\Omega^{\ell'}f(t)\|_{L^2}\lesssim \e {(1+|t|)}^{C\e},
\]
and combining this with the bounds
\[
\|(x_j\partial_j-x_j\partial_i)\nabla^{\ell}\Omega^{\ell'}f(t)\|_{L^2}\lesssim \e {(1+|t|)}^{C\e},
\]
which is again true for $|\ell|+|\ell'|\leq N-2$, we obtain that
\[
\||x|\cdot\nabla\nabla^{\ell}\Omega^{\ell'}f(t)\|_{L^2}\lesssim \e {(1+|t|)}^{C\e}
\]
which, via standard harmonic analysis estimates, implies \eqref{aprioriE}.
\end{proof}

The next lemma contains some basic bound that are consequences of the energy bounds \eqref{Nrg}.
More refined bounds will be proved later in Section \ref{secImp}.

\begin{lemma}[Basic bounds]\label{LemBB}
Let $|t| \approx 2^m$, $m \in \{0,1,\dots \}$. Under the a priori assumptions \eqref{Ali0}-\eqref{Aliinfty}, we have
\begin{align}
\label{aprioriLinfty}\sup_{|\ell|\leq N/2} \|\widehat{\Omega^{\ell}Q_{jk}f}(t)\|_{L^\infty} \leq \e 2^{-20k^+}\cdot2^{-2k-j/2}\cdot2^{C\e m},
\end{align}
for any $(j,k)\in\mathcal{J}$.
As a consequence we see that
\begin{align}
\label{aprioriLinfty2}
\sup_{|\ell|\leq N/2} \|\widehat{P_k\Omega^{\ell}f}(t)\|_{L^\infty} \leq \e 2^{-20k^+} 2^{-3k/2} 2^{C\e m}.
\end{align}

Moreover
\begin{align}
\label{aprioriLinfty3}
\sup_{|\ell|\leq N/2} \|\widehat{P_k\Omega^{\ell}f}(t)\|_{L^\infty} \leq \e 2^{-20k^+} 2^{(1+C\e)m}.
\end{align}
In particular, combining \eqref{aprioriLinfty2}-\eqref{aprioriLinfty3}, we have
\begin{align}
\label{aprioriLinfty4}
& \sup_{|\ell|\leq N/2} \|\widehat{P_k\Omega^{\ell}f}(t)\|_{L^\infty} \lesssim \e 2^{-20k^+}\min\big(2^{(1+C\e)m}, 2^{-3k/2}2^{C\e m}\big).
\end{align}

\end{lemma}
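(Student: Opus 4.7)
The four bounds will be derived in cascade, with \eqref{aprioriLinfty} carrying the main content. The starting point is \eqref{aprioriE} together with the commutations $\Omega^\ell Q_{jk}f=Q_{jk}\Omega^\ell f$ (valid because $\Omega$ commutes with Littlewood--Paley projections and with the radial cutoff $\varphi_j^{(k)}$). Choosing the inner parameter $|\ell|=20$ in the $2^{|\ell|k^+}$ factor of \eqref{aprioriE} (which is permitted as long as $20+N/2\leq N-2$) gives
\[
\|\Omega^\ell Q_{jk} f(t)\|_{L^2}\lesssim \e\,2^{-20k^+}\,2^{-j-k}\,2^{C\e m},\qquad |\ell|\leq N/2+O(1),\;(j,k)\in\mathcal{J}.
\]

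The core step is promoting this to the pointwise bound \eqref{aprioriLinfty}. The naive estimate $\|\widehat g\|_{L^\infty}\leq (2\pi)^{-3/2}\|g\|_{L^1}\lesssim 2^{3j/2}\|g\|_{L^2}$ applied to $g=Q_{jk}\Omega^\ell f$ only yields $\e\,2^{-20k^+}\,2^{j/2-k}\,2^{C\e m}$, which falls short of the target by a factor $2^{j+k}\geq 1$. To recover this factor we exploit the three-dimensional Fourier structure via the Funk--Hecke expansion
\[
e^{-i\rho r\,\theta\cdot\omega}=4\pi\sum_{\ell',m}(-i)^{\ell'}j_{\ell'}(\rho r)\,Y_{\ell'}^m(\theta)\,\overline{Y_{\ell'}^m(\omega)},
\]
together with the uniform bound $|j_{\ell'}(z)|\lesssim z^{-1}$ for $z\gtrsim \ell'$. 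For $(j,k)\in\mathcal{J}$ we have $\rho r\sim 2^{j+k}\geq 1$; the low-angular-mode contribution $\ell'\lesssim\rho r$ gets the full $(\rho r)^{-1}$ decay, while the high modes $\ell'\gg\rho r$ are absorbed by the angular regularity of $g$ (accessible by taking a few more $\Omega$'s in \eqref{aprioriE}). The net output is
\[
\|\widehat{Q_{jk}\Omega^\ell f}(\rho\,\cdot)\|_{L^2(\mathbb{S}^2)}\lesssim\rho^{-1}\cdot 2^{j/2}\cdot\e\,2^{-20k^+-j-k}\,2^{C\e m}=\e\,2^{-20k^+-2k-j/2}\,2^{C\e m},
\]
and the Sobolev embedding $H^2(\mathbb{S}^2)\hookrightarrow L^\infty(\mathbb{S}^2)$, applied at the cost of two extra $\Omega$ derivatives, promotes this $L^2_\theta$ estimate to $L^\infty_\theta$, yielding \eqref{aprioriLinfty}. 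Bound \eqref{aprioriLinfty2} then follows by summing \eqref{aprioriLinfty} over $j\geq k^-$ (using $\sum_{j\geq k^-}2^{-j/2}\sim 2^{-k^-/2}$ and the identity $-2k-k^-/2=-3k/2$, valid in both signs of $k$).

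For \eqref{aprioriLinfty3} we use Duhamel's formula directly. Since $\Omega^\ell$ commutes with $e^{it|\nabla|}$,
\[
\widehat{\Omega^\ell f}(t,\xi)=\widehat{\Omega^\ell v}(0,\xi)+\int_0^t e^{is|\xi|}\widehat{\Omega^\ell(u\Delta u)}(s,\xi)\,\mathrm{d}s.
\]
For $k>0$ the factor $2^{-20k^+}$ is extracted by writing $|\xi|^{20}\widehat{\Omega^\ell(u\Delta u)}=\widehat{(-\Delta)^{10}\Omega^\ell(u\Delta u)}$ on the support of $\varphi_k$; the resulting integrand is bounded in $L^1$ by Leibniz and Cauchy--Schwarz,
\[
\|(-\Delta)^{10}\Omega^\ell(u\Delta u)\|_{L^1}\lesssim\sum\|\nabla^{\alpha_1}\Omega^{\ell_1}u\|_{L^2}\|\nabla^{\alpha_2}\Omega^{\ell_2}\Delta u\|_{L^2}\lesssim\e^2\,2^{C\e s},
\]
using \eqref{Ali0} (applicable since $20+N/2\leq N$). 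Integrating over $s\in[0,t]$ with $|t|\approx 2^m$ gives $\e^2\,2^{m(1+C\e)}\leq\e\,2^{m(1+C\e)}$, and the Schwartz initial data contribute at most $\e\,2^{-Mk^+}$ for any $M$. Bound \eqref{aprioriLinfty4} is just the pointwise minimum of \eqref{aprioriLinfty2} and \eqref{aprioriLinfty3}.

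The principal obstacle is the angular stationary phase step of paragraph two---in particular, handling simultaneously the low-$\ell'$ spherical harmonic modes (where $|j_{\ell'}(\rho r)|\lesssim (\rho r)^{-1}$ is direct) and the high-$\ell'$ tail (where angular regularity must be traded for decay in $\ell'$). Once that estimate is cleanly packaged, the remaining bounds are routine book-keeping.
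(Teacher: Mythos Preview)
Your route to \eqref{aprioriLinfty} via the Funk--Hecke expansion is viable but considerably heavier than the paper's, and it has a loose end: the claimed uniform bound $|j_{\ell'}(z)|\lesssim z^{-1}$ fails near the turning point $\ell'\sim z$, where $|j_{\ell'}(z)|\sim z^{-5/6}$; this transition band has to be handled separately by angular regularity (one extra $\Omega$ suffices), which you do not make explicit. The paper bypasses the whole Bessel apparatus: writing $F=\widehat{\Omega^\ell Q_{jk}f}$ in polar coordinates $\xi=\rho\theta$, it combines Sobolev on $\mathbb{S}^2$ with the one--dimensional Gagliardo--Nirenberg inequality in $\rho$,
\[
\|F\|_{L^\infty}\lesssim\|\partial_\theta^4 F\|_{L^2_{\rho,\theta}}^{1/2}\|\partial_\theta^4\partial_\rho F\|_{L^2_{\rho,\theta}}^{1/2},
\]
and reads the two factors off from \eqref{aprioriE} (the second via $\partial_\xi\widehat{Q_{jk}f}=\widehat{-ixQ_{jk}f}$ and $|x|\approx 2^j$). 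This is a two--line replacement for your spherical--harmonic decomposition.

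Your argument for \eqref{aprioriLinfty3} has a genuine gap. In the Leibniz expansion of $\|\Omega^\ell(u\Delta u)\|_{L^1}$ (and likewise with $(-\Delta)^{10}$ attached) there is always a term with no spatial derivative on the factor $u$, and you then invoke $\|\Omega^{\ell_1}u\|_{L^2}\lesssim\e(1+s)^{C\e}$. This is \emph{not} provided by \eqref{Ali0}, which only controls $\|\Omega^{\ell_1}\partial u\|_{L^2}$; the undifferentiated $u=-|\nabla|^{-1}\Im v$ has a low--frequency defect, and the best one gets from \eqref{Ali0} is the crude $\|u(t)\|_{L^2}\leq\|u(0)\|_{L^2}+\int_0^t\|\partial_su\|_{L^2}\lesssim\e(1+t)^{1+C\e}$, which after time integration overshoots \eqref{aprioriLinfty3} by a full factor of $t$. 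The paper avoids $\|u\|_{L^2}$ altogether by running a bootstrap directly on $\|\widehat{P_k\Omega^\ell f}\|_{L^\infty}$: assuming the bound with a large constant $C'$, it estimates the bilinear Duhamel integrand $\int(|\xi-\eta|/|\eta|)\,|\widehat{g_1}(\xi-\eta)||\widehat{g_2}(\eta)|\,d\eta$ in Fourier variables, using $L^2\times L^2$ when $k_1\le k_2+10$ (with the bootstrap hypothesis feeding the very--low--frequency $L^2$ bound $\|P_{k_2}g_2\|_{L^2}\lesssim\e 2^{3k_2/2}2^{(1+C'\e)m}$) and $L^\infty\times L^2$ otherwise. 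This yields $\e^2 2^{C\e m}$ per unit time and closes to \eqref{aprioriLinfty3} after integration.
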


\begin{proof}
Since \eqref{aprioriLinfty2} is a direct consequence of \eqref{aprioriLinfty},
and \eqref{aprioriLinfty4} is direct consequences of \eqref{aprioriLinfty3}, we only need to prove \eqref{aprioriLinfty} and \eqref{aprioriLinfty3}.

\noindent
{\it Proof of \eqref{aprioriLinfty}.}
Let $\widehat{\Omega^{\ell}Q_{jk}f}(t,\xi)=F(\xi)$, then \eqref{aprioriE} implies
\[\|\Omega^{N/4} \partial_{\xi}F\|_{L^2} \lesssim 2^{-k-20k^+}2^{C\e m};
  \quad \|\Omega^{N/4}F\|_{L^2}\lesssim 2^{-j-k-20k^+}2^{C\varepsilon m}.
\]
Writing $\xi=\rho\theta$ in polar coordinates, and using Sobolev embedding in $\theta$ and Gagliardo-Nirenberg in $\rho$,
and using the support information of $F$, we see that
\[
\|F\|_{L^{\infty}} = \|F(\rho\theta)\|_{L_{\rho,\theta}^{\infty}}
  \lesssim \| \partial_{\theta}^4 F(\rho\theta)\|_{L_{\rho,\theta}^2}^{1/2}\| \partial_{\theta}^4\partial_{\rho} F(\rho\theta)\|_{L_{\rho,\theta}^2}^{1/2}
  \lesssim \e 2^{-k} 2^{-j/2-k} 2^{-20k^+} 2^{C\e m}.
\]

\noindent
{\it Proof of \eqref{aprioriLinfty3}.}
This bound is not a direct consequence of the a priori bounds \eqref{Ali0} when frequencies are very small.
We therefore prove it by a bootstrap argument.
More precisely, we assume the bounds \eqref{Ali0}-\eqref{Aliinfty}, which in particular imply \eqref{aprioriLinfty2}, and assume a priori
\begin{align}
\label{apLinfty3.1}
\sup_{|\ell|\leq N/2} 2^{20k^+} \|\widehat{P_k\Omega^{\ell}f}(t)\|_{L^\infty} \lesssim \e 2^{(1 +C'\e)m},
\end{align}
where $C'\gg C$ but still is an absolute constant, and show that these imply
\begin{align}
\label{apLinfty3.2}
\sup_{|\ell|\leq N/2} 2^{20k^+} \|\widehat{P_k\Omega^{\ell}f}(t)\|_{L^\infty} \lesssim \e + \e^2 2^{(1 + C'\e/2)m}.
\end{align}

Since $f(0)=v(0)=\ve(\psi^0-i|\nabla|\varphi^0)$, in view of the decomposition \eqref{duhamel}-\eqref{duhamel3}, and the identity
\begin{multline}
\label{commomg}
\Omega^{\ell}\int_{\mathbb{R}^3}e^{is\Phi_{\kappa_1\kappa_2}(\xi,\eta)}\frac{|\xi-\eta|}{|\eta|}\widehat{f}(s,\xi-\eta)\widehat{g}(s,\eta)\,\mathrm{d}\eta
\\
=\sum_{\ell_1+\ell_2=\ell}\int_{\mathbb{R}^3}e^{is\Phi_{\kappa_1\kappa_2}(\xi,\eta)}\frac{|\xi-\eta|}{|\eta|}
  \widehat{\Omega^{\ell_1}f}(s,\xi-\eta)\widehat{\Omega^{\ell_2}g}(s,\eta)\,\mathrm{d}\eta,
\end{multline}
we only need to estimate, for fixed $\kappa_{1,2}\in\{\pm\}$ and fixed $\ell_1,\ell_2$, the integral
\begin{equation*}
\bigg|\int_0^t\tau_m(s)\int_{\mathbb{R}^3}e^{is\Phi_{\kappa_1\kappa_2}(\xi,\eta)}\frac{|\xi-\eta|}{|\eta|}
  \widehat{\Omega^{\ell_1}f_{\kappa_1}}(s,\xi-\eta)\widehat{\Omega^{\ell_2}f_{\kappa_2}}(s,\eta)\,\mathrm{d}\eta\mathrm{d}s\bigg|\lesssim\ve^22^{(1+C'\e/2)m}.
\end{equation*}
Let $\Omega^{\ell_j}f_{\kappa_j}=g_j$, then for $|\ell|\leq N/2$ and each $k$ we have
\begin{equation}
\label{apLinftym'}
{\|P_kg_j\|}_{L^2} \lesssim \e 2^{-30k^+}2^{C\varepsilon m},\quad \|\varphi_k(\xi)\cdot \widehat{g_j}\|_{L^\infty}
  \lesssim \e 2^{-20k^+}\min\big(2^{(1+C'\e)m},2^{-3k/2}2^{C\e m}\big),
\end{equation}
and by fixing time $s \approx 2^m$ it suffices to bound
\begin{align}
\label{apLinftym}
\begin{split}
|I|[g_1,g_2](\xi) := & \int_{\R^3} \frac{|\xi-\eta|}{|\eta|}
  |\widehat{g_1}(\xi-\eta)| \, |\widehat{g_2}(\eta)| \,\mathrm{d}\eta\lesssim \e^22^{C'\e m/2}.
\end{split}
\end{align}

Now for $|\xi| \approx 2^k$ we have
\begin{align*}
2^{20k^+}\sum_{k_1\leq k_2+10}|I|[ P_{k_1}g_1, P_{k_2}g_2] \lesssim 
  \sum_{k_1\leq k_2+10} 2^{20k_1^+}{\|P_{k_1}g_1\|}_{L^2} 2^{20k_2^+}{\|P_{k_2}g_2\|}_{L^2} 2^{k_1-k_2} \lesssim\e^2 2^{C\e m},
\end{align*}
using \eqref{apLinftym'} and also the consequence that ${\|P_{k_2}g_2\|}_{L^2}  \lesssim \e2^{3k/2}2^{(1+C'\e)m}$.
For the remaining terms we have
\begin{align*}
2^{20k^+}\sum_{k_1\geq k_2+10}|I|[P_{k_1}g_1, P_{k_2}g_2]
  & \lesssim \sum_{k_1\geq k_2+10} 2^{k_1} 2^{20k_1^+}{\|\varphi_{k_1}\widehat{g_1}\|}_{L^\infty} 2^{20k_2^+}{\| P_{k_2}g_2\|}_{L^2} 2^{k_2/2}
  \\ & \lesssim \ve 2^{C\e m} \cdot\sum_{k_1} 2^{3k_1/2}2^{-10k_1^+}{\|\varphi_{k_1}\widehat{g_1}\|}_{L^\infty} \lesssim \e^22^{C\e m},
\end{align*}
using \eqref{aprioriLinfty2}. This proves \eqref{aprioriLinfty3}.
\end{proof}

\medskip
\subsection{Angular integration by parts}
Here is a Lemma about angular integration by parts which we will use several times in the rest of the proof.

\begin{lemma}\label{LemAIBP}
Consider the
integral\begin{equation}
\label{LemAIBP1}
J=\int_{\mathbb{R}^3}e^{is\Phi(\xi,\eta)}\chi_{k}(\xi)\chi_{k_1}(\xi-\eta)\chi_{k_2}(\eta)\varphi_{p+k+\min(k_1,k_2)}(\xi\wedge\eta)
  \widehat{f}(\xi-\eta)\widehat{g}(\eta)\,\mathrm{d}\eta,
\end{equation}
where $p\leq 0$, $\chi$ is a compactly supported and smooth cutoff,
\begin{equation*}
|s|\approx 2^m,\quad \Phi(\xi,\eta)=|\xi|\pm|\xi-\eta|\pm|\eta|,
\end{equation*}
and, for all $|\alpha|\leq M$, the functions $f$ and $g$ satisfy
\begin{equation}
\label{LemAIBP2}
\|\nabla^\alpha\widehat{f}\|_{L^2}\lesssim 2^{j_1|\alpha|}\|\widehat{f}\|_{L^2},
\quad \|\nabla^{\alpha}\widehat{g}\|_{L^2}\lesssim 2^{j_2|\alpha|}\|\widehat{g}\|_{L^2};
\quad j_r\geq\max(-k_r,0),\,r\in\{1,2\},
\end{equation}
as well as
\begin{equation}
\label{LemAIBP3}
\|\Omega^{\alpha}\widehat{f}\|_{L^2}\lesssim
2^{\kappa|\alpha|}\|\widehat{f}\|_{L^2},\quad\|\Omega^{\alpha}\widehat{g}\|_{L^2}\lesssim
2^{\kappa|\alpha|}\|\widehat{g}\|_{L^2};\quad \kappa\geq 0,\end{equation}
where $\Omega$ denotes the rotation vector fields. Then, we
have
\begin{equation}
\label{LemAIBPconc}
|J|\lesssim 2^{-\nu Mm}\|\widehat{f}\|_{L^2}\|\widehat{g}\|_{L^2},
\end{equation}
provided that
\begin{equation}
\label{assump}
\min(j_1,j_2)\leq (1-\nu) m,\quad p\geq\frac{-m-\min(k,k_1,k_2)+\kappa+\nu m}{2}
\end{equation} for some constant $\nu>0$.
\end{lemma}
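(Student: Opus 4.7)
The plan is to perform $M$ iterated angular integrations by parts in $\eta$ using a rotation vector field $X$ selected to both annihilate as many of the radial cutoffs as possible and to admit a quantifiable lower bound for $|X\Phi|$. The key geometric computation is that for the basic rotations $\Omega_{ij}=\eta_i\partial_{\eta_j}-\eta_j\partial_{\eta_i}$ one has $\Omega_{ij}\Phi=\pm(\xi\wedge\eta)_{ij}/|\xi-\eta|$, and analogously for rotations $Y_{ij}$ in the $\xi-\eta$ variable one has $Y_{ij}\Phi=\pm(\xi\wedge\eta)_{ij}/|\eta|$. Choosing the appropriate component,
\[
|X\Phi|\gtrsim \frac{|\xi\wedge\eta|}{\min(|\xi-\eta|,|\eta|)}\sim 2^{p+k}
\]
uniformly on the support of integration.

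By the change of variables $\eta\leftrightarrow\xi-\eta$, which swaps the triples $(k_1,j_1,f)$ and $(k_2,j_2,g)$, preserves $|\xi\wedge\eta|$, and leaves $\Phi$ unchanged up to a relabelling of signs, we may assume $j_2\le j_1$, so $j_2\le(1-\nu)m$, and take $X=Y_{ij}$ for a suitable rotation in $\xi-\eta$. Such an $X$ annihilates both $\chi_{k_1}(|\xi-\eta|)$ and $\chi_k(|\xi|)$; on $\chi_{k_2}(|\eta|)$ and $\varphi_{p+k+\min(k_1,k_2)}(\xi\wedge\eta)$ each $X$-derivative contributes only a bounded cutoff-derivative factor that, by the hypothesis on $p$, is absorbed into the total phase gain.

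Iterating the identity $e^{is\Phi}=X(e^{is\Phi})/(isX\Phi)$ (the divergence-freeness of $X$ eliminates boundary terms) yields $J$ as a finite sum of integrals of the form
\[
\int_{\R^3} e^{is\Phi}\, G(\xi,\eta)\, X^{a_1}\widehat{f}(\xi-\eta)\, X^{a_2}\widehat{g}(\eta)\, \mathrm{d}\eta,
\]
with $a_1+a_2\le M$ and $G$ packaging $(X\Phi)^{-M-\ell}$, derivatives of cutoffs, and higher derivatives of $X\Phi$. Since $X$ is a pure rotation in $\xi-\eta$, $X^{a_1}\widehat{f}(\xi-\eta)$ equals a rotation of $\widehat{f}$ evaluated at $\xi-\eta$, with $L^2_\eta$-cost $\lesssim 2^{\kappa a_1}\|\widehat{f}\|_{L^2}$ by \eqref{LemAIBP3}. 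In the $\eta$ variable, however, $X$ decomposes as a rotation plus a translation-type derivative $\xi\cdot\nabla_\eta$; combining \eqref{LemAIBP2}--\eqref{LemAIBP3} with $|\xi|\lesssim 2^k$, each such application of $X$ to $\widehat{g}(\eta)$ costs $\lesssim 2^\kappa+2^{k+j_2}$ in $L^2$.

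Putting these ingredients together via Cauchy--Schwarz in $\eta$ and the lower bound $|sX\Phi|\gtrsim 2^{m+p+k}$,
\[
|J|\lesssim 2^{-M(m+p+k)}\bigl(2^\kappa+2^{k+j_2}\bigr)^M\|\widehat{f}\|_{L^2}\|\widehat{g}\|_{L^2}+(\text{lower-order corrections}).
\]
The hypothesis $p\ge \frac{-m-\min(k,k_1,k_2)+\kappa+\nu m}{2}$ together with $j_2\le(1-\nu)m$ is precisely what converts the right-hand side into $\lesssim 2^{-\nu Mm}\|\widehat{f}\|_{L^2}\|\widehat{g}\|_{L^2}$. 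The main obstacle is the careful bookkeeping of the cross-derivative contributions generated when $X$ in one variable differentiates the Fourier transform based on the other variable; the symmetry reduction and the condition $\min(j_1,j_2)\le(1-\nu)m$ are exactly what allow these terms to be absorbed.
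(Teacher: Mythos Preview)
Your scheme has a real gap: the displayed bound
\[
|J|\lesssim 2^{-M(m+p+k)}\bigl(2^\kappa+2^{k+j_2}\bigr)^M\|\widehat{f}\|_{L^2}\|\widehat{g}\|_{L^2}
\]
does \emph{not} imply $|J|\lesssim 2^{-\nu Mm}\|\widehat{f}\|_{L^2}\|\widehat{g}\|_{L^2}$ under the stated hypotheses. Take for instance $k=k_1=0$, $k_2=-m/2$, $\kappa=0$, $j_2=(1-\nu)m$, and $p$ at the boundary value $-m/4+\nu m/2$. Then your right-hand side is $\approx 2^{-M(3m/4+\nu m/2)}\cdot 2^{M(1-\nu)m}=2^{M(m/4-3\nu m/2)}$, which for $\nu<1/6$ blows up with $m$. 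The problem is the translation-type piece $\xi\cdot\nabla_\eta$ of $X$ acting on $\widehat{g}$: its cost $2^{k+j_2}$ per iteration is only offset by a single power of $2^{p}$ from the phase, whereas the hypothesis on $p$ is quadratic ($2p\ge\cdots$). A secondary issue is that your claim $|X\Phi|\gtrsim 2^{p+k}$ with $X=Y_{ij}$ requires $k_2\le k_1$, which is not implied by your symmetry reduction $j_2\le j_1$; when $k_1<k_2$ and $j_2<j_1$ neither choice of rotation is favorable.

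The paper addresses exactly this by a radial Fourier decomposition of the input with the small localization parameter. If, say, $j_1\le(1-\nu)m$, write $\widehat{f}(\xi-\eta)=\int e^{i\rho|\xi-\eta|}H(\rho,(\xi-\eta)/|\xi-\eta|)\,\mathrm{d}\rho$ and truncate to $|\rho|\lesssim 2^m$; the tail is $O(2^{-\nu Mm})$ by the $j_1$-bound. For fixed $\rho$ one uses the rotation $D=\eta_1\partial_{\eta_2}-\eta_2\partial_{\eta_1}$: it acts on $\widehat{g}(\eta)$ as a pure rotation (cost $2^\kappa$), and on $e^{i\rho|\xi-\eta|}H$ it produces $i\rho D|\xi-\eta|$ (absorbed into the phase, since $|\pm s+\rho|\approx 2^m$) plus an angular derivative of $H$ (cost $2^\kappa$). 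This replaces your $2^{k+j_2}$ loss by $2^\kappa$ and yields the sharp bound $2^{M(\kappa-\min(k,k_1,k_2)-2p-m)}$, which the hypothesis on $p$ converts exactly into $2^{-\nu Mm}$.
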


\begin{proof}
We may assume that $\widehat{f}(\zeta)$ is supported in the region $|\zeta|\approx 2^{k_1}$, and similarly for $\widehat{g}$; by symmetry
and (\ref{assump}) we may also assume $k_1\geq k_2$ and
$\min(k,k_1,k_2)\geq -m$. Writing
\begin{equation}\varphi_{p+k+k_2}(\xi\wedge\eta)=\varphi_{p+k+k_2}(\xi\wedge\eta)\cdot\sum_{j=1}^3\psi_{p+k_1+k_2}((\xi\wedge\eta)_j)\end{equation}
for some suitable cutoff $\chi$, we may assume that
$|\xi_1\eta_2-\xi_2\eta_1|\approx 2^{p+k+k_2}$ in the support of integral.
There are then three cases to consider.

\medskip
{\it Case 1: $|k-k_1|\leq 4$, and $j_1\leq (1-\nu)m$}.
First express $\widehat{f}$ in polar coordinates and use the Fourier transform in the
radial coordinate to write
\begin{equation}\widehat{f}(\xi-\eta)=\int_{\mathbb{R}}e^{i\rho|\xi-\eta|}H\bigg(\rho,\frac{\xi-\eta}{|\xi-\eta|}\bigg)\,\mathrm{d}\rho,\end{equation}
where $H(\rho,\zeta)$ is homogeneous of degree $0$ in $\zeta$, then we have
that (due to our assumption about the support of $\widehat{f}$, and
$|k-k_1|\leq
4$)\begin{equation}\|2^k\rho^MH(\rho,\theta)\|_{L_{\rho,\theta}^2}\lesssim\bigg\|\bigg(\frac{\zeta}{|\zeta|}\cdot\partial_{\zeta}\bigg)^M\widehat{f}(\zeta)\bigg\|_{L^2}\lesssim
2^{j_1M}\|\widehat{f}\|_{L^2},\end{equation} so if we define
\begin{equation}H'(\rho,\theta)=H(\rho,\theta)\varphi_{\leq
m-10}(\rho),\quad
H''(\rho,\theta)=H(\rho,\theta)\varphi_{>m-10}(\rho),\end{equation} and
define $(f',f'')$ and $(J',J'')$ accordingly, then we have
\begin{equation}\|\widehat{f''}\|_{L^2}\approx\|2^kH''(\rho,\theta)\|_{L_{\rho,\theta}^2}\lesssim
2^{-\nu Mm}\|\widehat{f}\|_{L^2}\end{equation} since $j_1\leq
(1-\nu)m$. By Cauchy-Schwartz we then have
\begin{equation}|J''|\lesssim\|\widehat{f''}\|_{L^2}\|\widehat{g}\|_{L^2}\lesssim
2^{-\nu Mm}\|\widehat{f}\|_{L^2}\|\widehat{g}\|_{L^2},\end{equation}
which is acceptable. Below we will consider
$f'$ and $J'$ only, and will omit the ``prime'' symbol for simplicity. Since now
\begin{equation}
\widehat{f}(\xi-\eta)=\int_{|\rho|\lesssim 2^m}\varphi_{\leq m-10}(\rho)
  e^{i\rho|\xi-\eta|}H\bigg(\rho,\frac{\xi-\eta}{|\xi-\eta|}\bigg)\,\mathrm{d}\rho,
\end{equation}
we will first fix one $\rho$ and denote the corresponding contribution by
$J_{\rho}$. Note that $|\rho|\ll s$ by our assumption, so $|\pm s+\rho|\approx 2^m$.

Let $D=\eta_1\partial_{\eta_2}-\eta_2\partial_{\eta_1}$ and
\begin{equation}L:f\mapsto\frac{Df}{D|\xi-\eta|};\quad
L':f\mapsto-D\bigg(\frac{f}{D|\xi-\eta|}\bigg),\end{equation} then we have
$L(e^{is\Phi+i\rho|\xi-\eta|})=i(\pm s+\rho)e^{is\Phi+i\rho|\xi-\eta|}$,
and thus\begin{equation}J_{\rho}=(i(\pm
s+\rho))^{-M}\int_{\mathbb{R}^3}e^{is\Phi+i\rho|\xi-\eta|}(L')^MG(\eta)\,\mathrm{d}\eta,\end{equation}
where
\begin{equation}
G(\eta) := \chi_{k}(\xi)\chi_{k_1}(\xi-\eta)\chi_{k_2}(\eta)\varphi_{p+k+k_2}(\xi\wedge\eta)
  \psi_{p+k_1+k_2}((\xi\wedge\eta)_3)H\bigg(\rho,\frac{\xi-\eta}{|\xi-\eta|}\bigg)\widehat{g}(\eta).
\end{equation}
By induction in $M$ one can prove that
\begin{equation}
(L')^MG=\sum_{r=0}^M\sum_{\alpha_0+\cdots+\alpha_r=M-r}D^{\alpha_0}G\cdot\frac{D^{\alpha_1+2}|\xi-\eta|\cdots
  D^{\alpha_r+2}|\xi-\eta|}{(D|\xi-\eta|)^{r+M}},
\end{equation}
where the coefficients are omitted for simplicity (the same below);
now we analyze each factor appearing in this expression.

First, we have
\begin{equation}D|\xi-\eta|=\frac{\xi_1\eta_2-\xi_2\eta_1}{|\xi-\eta|},\quad
|D|\xi-\eta||\approx 2^{k_2+p};\end{equation} by Fa\`{a} di Bruno's formula one has that
\begin{multline}
D^q|\xi-\eta|=\sum_{r=1}^q\sum_{\alpha_1+\cdots+\alpha_r=q-r}|\xi-\eta|^{1-2r}\prod_{j=1}^r
D^{\alpha_j+1}|\xi-\eta|^2\\=\sum_{r=1}^q\sum_{\alpha_1+\cdots+\alpha_r=q-r}|\xi-\eta|^{1-2r}\prod_{\alpha_j\,\textrm{even}}(\xi_1\eta_2-\xi_2\eta_1)\prod_{\alpha_j\,\textrm{odd}}(\xi_1\eta_1+\xi_2\eta_2),
\end{multline}again with coefficients omitted, therefore
\begin{equation}\|D^q|\xi-\eta|\|_{L^\infty}\lesssim\sup_{r\geq 1}2^{(1-2r)k}2^{r(k+k_2)}\lesssim 2^{k_2}
\end{equation}
with constants depending on $q$. As for the $D^{\alpha_0}G$ factor, we have
\begin{equation}
\label{est1}
\|D^q(\chi_k(\xi)\chi_{k_1}(\xi-\eta)\chi_{k_2}(\eta))\|_{L^\infty}\lesssim 1,
  \qquad \|D^q\widehat{g}(\eta)\|_{L^2}\lesssim 2^{\kappa q}\|\widehat{g}\|_{L^2},
\end{equation}
and
\begin{equation}
\label{est2}
D^q(\varphi_{p+k+k_2}(\xi\wedge\eta))=\sum_{r=1}^q\sum_{\alpha_1+\cdots+\alpha_r=q-r}(\partial^r\varphi_{p+k+k_2})(\xi\wedge\eta)\prod_{j=1}^rD^{\alpha_j+1}(\xi\wedge\eta),
\end{equation}
which implies that
\begin{equation}
\|D^q(\varphi_{p+k+k_2}(\xi\wedge\eta))\|_{L^\infty}\lesssim
\sup_{1\leq r\leq q}2^{(-p-k-k_2)r}2^{r(k+k_2)}\lesssim 2^{-qp},
\end{equation}
and similarly
\begin{equation}
\|D^q(\psi_{p+k+k_2}((\xi\wedge\eta)_3))\|_{L^\infty}\lesssim 2^{-qp}.
\end{equation}
Using also that
\begin{equation}
D^qH\bigg(\rho,\frac{\xi-\eta}{|\xi-\eta|}\bigg)=\sum_{|\beta|,|\gamma|\leq |\alpha|\leq q}
\eta^{\alpha}(\xi-\eta)^\beta|\xi-\eta|^{-|\alpha|-|\beta|}(\nabla_{\theta}^{\gamma}H)\bigg(\rho,\frac{\xi-\eta}{|\xi-\eta|}\bigg),\end{equation}
we get
\begin{equation}
\label{est6}
\|D^q G\|_{L^1}\lesssim\sup_{|\gamma|+l\leq q}2^{l(\kappa-p)}
  \|2^k\nabla_{\theta}^\gamma H(\rho,\theta)\|_{L_{\theta}^2} \|\widehat{g}\|_{L^2},
\end{equation}
and therefore
\begin{equation}
\|(L')^MG\|_{L^1}\lesssim\sup_{|\gamma|+l+r\leq M} 2^{l(\kappa-p)-(r+M)(k_2+p)+rk_2}
  \|2^k\nabla_{\theta}^\gamma H(\rho,\theta)\|_{L_{\theta}^2} \|\widehat{g}\|_{L^2},
\end{equation}
and
\begin{equation}
|J_{\rho}|\lesssim\sup_{|\gamma|+l+r\leq M}2^{-M m + l(\kappa-p)-(r+M)(k_2+p)+rk_2}
  \|2^k\nabla_{\theta}^\gamma H(\rho,\theta)\|_{L_{\theta}^2} \|\widehat{g}\|_{L^2}.
\end{equation}
Integrating in $\rho$, we get that
\begin{align*}
|J| & \lesssim\int_{|\rho|\lesssim 2^m}\varphi_{\leq m-10}(\rho)|J_{\rho}|
\\ & \lesssim 2^{m/2}\|\widehat{g}\|_{L^2} \sup_{|\gamma|+l+r\leq M}2^{-Mm+l(\kappa-p)-(r+M)(k_2+p)+rk_2}\|2^k\nabla_{\theta}^\gamma
  H(\rho,\theta)\|_{L_{\rho,\theta}^2}
\\
& \lesssim 2^{m/2}\|\widehat{f}\|_{L^2} \|\widehat{g}\|_{L^2}
  \sup_{|\gamma|+l+r\leq M} 2^{-Mm+l(\kappa-p)-(r+M)(k_2+p)+rk_2+\kappa|\gamma|},
\end{align*}
using
the fact that
\begin{equation}
\|2^k\nabla_{\theta}^\gamma H(\rho,\theta)\|_{L_{\rho,\theta}^2} \approx \|\Omega^\gamma\widehat{f}\|_{L^2} \lesssim
  2^{\kappa|\gamma|} \|\widehat{f}\|_{L^2}.
\end{equation}
Optimizing the last line, we get that
\begin{equation}
|J|\lesssim 2^{m/2} \|\widehat{f}\|_{L^2} \|\widehat{g}\|_{L^2}\cdot 2^{M(\kappa-k_2-2p-m)}\lesssim
  2^{-\nu Mm}\|\widehat{f}\|_{L^2} \|\widehat{g}\|_{L^2}
\end{equation}
due to our choice \eqref{assump}.

\medskip
{\it Case 2: $|k_1-k_2|\leq 4$, and $j_1\leq (1-\nu)m$}.
We then apply the same argument as in Case 1, except that the bounds are
now\begin{equation}|D|\xi-\eta||\approx
2^{k+p},\quad\|D^{q}|\xi-\eta|\|_{L^\infty}\lesssim 2^k,
\end{equation}
and
\eqref{est1}$\approx$\eqref{est6} still hold, except that the factor $2^k$ in \eqref{est6}
is replaced by $2^{k_1}$. Following the same lines, we get
\begin{equation}
|J|\lesssim2^{m/2}\|\widehat{f}\|_{L^2} \|\widehat{g}\|_{L^2}
  \sup_{|\gamma|+l+r\leq M}2^{-Mm+l(\kappa-p)-(r+M)(k+p)+rk+\kappa|\gamma|},
\end{equation}
and consequently
\begin{equation}
|J|\lesssim 2^{m/2} \|\widehat{f}\|_{L^2} \|\widehat{g}\|_{L^2}\cdot 2^{M(\kappa-k-2p-m)}
  \lesssim 2^{-\nu Mm}\|\widehat{f}\|_{L^2}|\widehat{g}\|_{L^2}
\end{equation}
due to our choice \eqref{assump}.

\medskip
{\it Case 3: $|k-k_1|\leq 4$, and $j_2 \leq (1-\nu)m$}.
In this case we switch the role of $\eta$ and $\xi-\eta$;
first we manipulate $\widehat{g}(\eta)$ in the same way as Case 1 above, and reduce to studying $H(\rho,\eta/|\eta|)$.
Define $D=(\xi-\eta)_1\partial_{\eta_2}-(\xi-\eta)_2\partial_{\eta_1}$ and define
$L$ and other quantities accordingly, then we have, replacing the estimates in Case 1, that
\begin{equation}
D|\eta|=\frac{\xi_1\eta_2-\xi_2\eta_1}{|\eta|},\quad |D|\eta||\approx 2^{p+k},
\end{equation}
and
\begin{multline}
D^q|\eta|=\sum_{r=1}^q\sum_{\alpha_1+\cdots+\alpha_r=q-r}|\eta|^{1-2r}\prod_{j=1}^r
  D^{\alpha_j+1}|\eta|^2\\=\sum_{r=1}^q\sum_{\alpha_1+\cdots+\alpha_r=q-r}|\eta|^{1-2r}\prod_{\alpha_j\,\textrm{even}}(\xi_1\eta_2-\xi_2\eta_1)\prod_{\alpha_j\,\textrm{odd}}(\xi_1(\xi-\eta)_1+\xi_2(\xi-\eta)_2),
\end{multline}
which implies
\begin{equation}
\|D^q|\eta|\|_{L^\infty}\lesssim 2^{k_2+q(k-k_2)};
\end{equation}
moreover, we have
\begin{equation}
\|D^q(\chi_k(\xi)\chi_{k_1}(\xi-\eta)\chi_{k_2}(\eta))\|_{L^\infty}
  \lesssim 2^{q(k-k_2)},\quad \|D^q\widehat{f}(\xi-\eta)\|_{L^2}\lesssim 2^{\kappa q}\|\widehat{g}\|_{L^2},
\end{equation}
and \eqref{est2} now implies that
\begin{equation}
\|D^q(\varphi_{p+k+k_2}(\xi\wedge\eta))\|_{L^\infty}\lesssim \sup_{1\leq r\leq q}2^{(-p-k-k_2)r}2^{2kr}\lesssim 2^{(-p+k-k_2)q},
\end{equation}
and similarly
\begin{equation}
\|D^q(\psi_{p+k+k_2}((\xi\wedge\eta)_3))\|_{L^\infty}\lesssim 2^{(-p+k-k_2)q}.
\end{equation}
Using also that
\begin{equation}
D^q H\bigg(\rho,\frac{\eta}{|\eta|}\bigg)=\sum_{|\beta|,|\gamma|\leq |\alpha|\leq q}
  (\xi-\eta)^{\alpha}\eta^\beta|\eta|^{-|\alpha|-|\beta|}(\nabla_{\theta}^{\gamma}H)\bigg(\rho,\frac{\eta}{|\eta|}\bigg),
\end{equation}
we get
\begin{equation}
\|D^qG\|_{L^1}\lesssim\sup_{|\gamma|+l\leq q} 2^{l(\kappa-p+k-k_2)}\|2^k\nabla_{\theta}^\gamma
  H(\rho,\theta)\|_{L_{\theta}^2}\|\widehat{g}\|_{L^2},
\end{equation}
so following the same line as Case 1 we get that
\begin{equation}
|J| \lesssim 2^{m/2}\|\widehat{f}\|_{L^2} \|\widehat{g}\|_{L^2}\cdot 2^{M(\kappa-k_2-2p-m)}
  \lesssim 2^{-\nu Mm} \|\widehat{f}\|_{L^2} \|\widehat{g}\|_{L^2}
\end{equation}
due to our choice \eqref{assump}.
\end{proof}

Here is an additional simple result for non-stationary  integrals:
\begin{lemma}\label{lemIBP0}
Assume that $\epsilon \in(0,1)$, $\epsilon K\geq 1$, $M \geq 1$ is an integer, and $F,g \in C^M (\R^n)$.
Assume also that $F$ is real-valued and satisfies
\begin{equation*}
|\nabla F| \geq \mathbf{1}_{\supp(g)}, \qquad \big| D^{\alpha} F \big| \lesssim_M \epsilon^{1-|\alpha|} \quad \forall \,\, 2 \leq |\alpha| \leq M.
\end{equation*}
Then
\begin{align}
\label{IBP02}
\Big| \int_{\R^n} e^{iKF} g \, dx \Big| \lesssim \frac{1}{(\epsilon K)^M} \sum_{|\alpha|\leq M} \epsilon^{|\alpha|} {\| D^\alpha g \|}_{L^1}
\end{align}
\end{lemma}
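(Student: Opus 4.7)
\textbf{Proof plan for Lemma \ref{lemIBP0}.}

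The plan is to run the standard non-stationary phase integration by parts, iterated $M$ times, using the vector field that inverts $e^{iKF}$. Set
\[
V(x) := \frac{\nabla F(x)}{|\nabla F(x)|^2}, \qquad L\varphi := \frac{1}{iK}\, V\cdot \nabla \varphi,
\]
so that $L(e^{iKF}) = e^{iKF}$ on the support of $g$, where $|\nabla F|\geq 1$ guarantees that $V$ is well-defined and bounded ($|V|\leq 1$). The formal adjoint is
\[
L^\ast h = -\frac{1}{iK}\,\nabla\cdot\big(V\, h\big)= -\frac{1}{iK}\big[(\nabla\cdot V)\, h + V\cdot \nabla h\big].
\]
Since $g$ and all its derivatives (up to order $M$) are compactly supported, boundary terms vanish and one obtains
\[
\int_{\R^n} e^{iKF}\, g\, dx \;=\; \int_{\R^n} e^{iKF}\, (L^\ast)^M g \, dx.
\]

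The heart of the argument is to expand $(L^\ast)^M g$ and then estimate each resulting term. A straightforward induction on $M$ yields a schematic formula
\[
(L^\ast)^M g \;=\; \frac{1}{(iK)^M}\sum_{r\geq 0}\,\sum_{\substack{|\alpha_1|,\dots,|\alpha_r|\geq 1\\ |\alpha_1|+\cdots+|\alpha_r|+|\beta|=M}} c_{\vec\alpha,\beta,r}\; \bigl(D^{\alpha_1}V\bigr)\cdots\bigl(D^{\alpha_r}V\bigr)\, D^{\beta}g,
\]
where the $c_{\vec\alpha,\beta,r}$ are combinatorial constants and the sum is finite (products of $V$-tensors are understood with appropriate contractions). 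The key analytic input, which I will establish as the main technical step, is the bound
\begin{equation}\label{Vbound}
\bigl\| D^{k} V\bigr\|_{L^\infty(\supp g)} \;\lesssim_{M}\; \epsilon^{-k}, \qquad 0\leq k\leq M-1.
\end{equation}
Assuming \eqref{Vbound}, each term in the above expansion is bounded pointwise by $K^{-M}\prod_{j=1}^r \epsilon^{-|\alpha_j|}\,|D^\beta g|=K^{-M}\epsilon^{-(M-|\beta|)}|D^\beta g|$, so integrating in $x$ and summing gives
\[
\Big|\int e^{iKF} g\, dx\Big|\;\lesssim\; \frac{1}{K^M}\sum_{|\beta|\leq M}\epsilon^{|\beta|-M}\,\|D^\beta g\|_{L^1} \;=\; \frac{1}{(\epsilon K)^M}\sum_{|\beta|\leq M}\epsilon^{|\beta|}\,\|D^\beta g\|_{L^1},
\]
which is exactly \eqref{IBP02}. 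The assumption $\epsilon K\geq 1$ guarantees the prefactor is indeed a small gain.

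The main obstacle, and essentially the only nontrivial computation, is proving \eqref{Vbound}. This is done by Faà di Bruno applied to the scalar function $h := |\nabla F|^{-2}$. Writing $V = h\,\nabla F$, the Leibniz rule reduces matters to bounding $D^k h$ and $D^{k+1}F$. For $D^{k+1}F$ with $k\geq 1$, the hypothesis gives $\|D^{k+1}F\|_{L^\infty(\supp g)}\lesssim \epsilon^{-k}$. For $h=(|\nabla F|^2)^{-1}$, Faà di Bruno expresses $D^k h$ as a sum of products of the form $(|\nabla F|^2)^{-(1+s)}\prod_i D^{\beta_i}(|\nabla F|^2)$ with $\sum_i |\beta_i|=k$ and each $|\beta_i|\geq 1$; since $|\nabla F|\geq 1$ on $\supp g$ we have $(|\nabla F|^2)^{-(1+s)}\leq 1$, while each $D^{\beta_i}(|\nabla F|^2)$ is a sum of products of two derivatives of $F$ of orders $\leq |\beta_i|+1$. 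Separating the case $|\beta_i|=0$ of each inner factor (which contributes $|\nabla F|\leq $ a constant harmlessly absorbed — here one observes that although no a priori upper bound on $|\nabla F|$ is assumed, only the product $\prod_i D^{\beta_i}(|\nabla F|^2)$ appears and each $|\beta_i|\geq 1$), and using the hypothesis $|D^j F|\lesssim \epsilon^{1-j}$ for $2\leq j\leq M$ on every factor where a derivative actually lands on a second-or-higher derivative of $F$, one gets $|D^k h|\lesssim \epsilon^{-k}$ for $0\leq k\leq M-1$. Combining this with the Leibniz expansion of $V=h\,\nabla F$ yields \eqref{Vbound}, completing the proof.
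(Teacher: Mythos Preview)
The paper states this lemma without proof (calling it a ``simple result''), and your approach---iterated integration by parts with $L=\tfrac{1}{iK}\tfrac{\nabla F}{|\nabla F|^2}\cdot\nabla$---is the standard one. However, your justification of the key bound $\|D^kV\|_{L^\infty(\supp g)}\lesssim\epsilon^{-k}$ has a genuine gap.

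You first discard $(|\nabla F|^2)^{-(1+s)}\leq 1$ and then aim for $|D^j h|\lesssim\epsilon^{-j}$, where $h=|\nabla F|^{-2}$. Even granting that, in the Leibniz expansion $D^k V=\sum_{j}\binom{k}{j}D^jh\cdot D^{k-j}(\nabla F)$ the term $j=k$ is $D^kh\cdot\nabla F$, and no upper bound on $|\nabla F|$ is assumed. Your parenthetical notices the issue but does not resolve it: each $D^{\beta_i}(|\nabla F|^2)$ with $|\beta_i|\geq 1$ still contains the cross term $2\,\nabla F\cdot D^{\beta_i}(\nabla F)$, producing a bare $|\nabla F|$ factor. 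From the bounds you actually record you therefore only obtain $|D^kV|\lesssim |\nabla F|\,\epsilon^{-k}$, which is not enough.

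The fix is to \emph{retain} the negative powers of $|\nabla F|$. With $G:=\nabla F$ one has $|D^{\beta_i}(|G|^2)|\lesssim |G|\,\epsilon^{-|\beta_i|}$ on $\supp g$ (using $|G|\geq 1$), hence for $j\geq 1$
\[
|D^jh|\;\lesssim\;\sum_{s\geq 1}|G|^{-2-2s}\,|G|^{s}\,\epsilon^{-j}\;\lesssim\;|G|^{-3}\epsilon^{-j},
\]
so the dangerous term satisfies $|D^kh|\cdot|G|\lesssim |G|^{-2}\epsilon^{-k}\leq\epsilon^{-k}$, and the remaining Leibniz terms go through as you wrote. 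More cleanly, bypass $h$ altogether: $V(G)=G/|G|^2$ is smooth and homogeneous of degree $-1$ in $G$, so $|\partial_G^m V(G)|\lesssim_m |G|^{-1-m}\leq 1$ on $\supp g$; a single Fa\`a di Bruno step applied to $x\mapsto V(G(x))$ then gives $|D^k_xV|\lesssim\epsilon^{-k}$ directly, since every block $\beta_i$ in the partition has $|\beta_i|\geq 1$, putting each factor $D^{\beta_i}G=D^{\beta_i+1}F$ into the range $|\alpha|\geq 2$ where the hypothesis applies.
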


\medskip
\subsection{Improved bounds on $\widehat{f}$}\label{secImp}
We now show how starting from the basic bounds of Lemma \ref{LemBB}, it is possible to get improved estimates by using,
among other things, the angular integration by parts estimate of Lemma \ref{LemAIBP}.

\begin{lemma}\label{lemImp}
Let $f$ be the profile \eqref{duhamel}-\eqref{duhamel2}, and assume the apriori bounds \eqref{aprioriE}.
Then, for all $t \approx 2^m$, $m\geq 0$, $k\in\mathbb{Z}$ we have
\begin{align}
\label{fhatimp}
\sup_{|\ell|\leq N/3+2} \|\widehat{P_k\Omega^{\ell}f}(t)\|_{L^\infty} \leq \e 2^{-20k^+} \max\big( 2^{\delta m}, 2^{-k}\big) 2^{(C\e+12\delta)m}.
\end{align}
\end{lemma}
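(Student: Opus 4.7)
The approach mirrors the bootstrap scheme used in the proof of \eqref{aprioriLinfty3}, but now refining the time loss from $2^{(1+C\e)m}$ to essentially $2^{\delta m}$ outside a low-frequency regime. Assume inductively (or as a bootstrap assumption on $[0,t]$) the bound \eqref{fhatimp} with the constant $\e$ replaced by some larger $C_0\e$. By Duhamel \eqref{duhamel}-\eqref{duhamel3} and the commutation identity \eqref{commomg}, it suffices to estimate, for each $m'\leq m$, each choice of signs $\kappa_{1,2}$, and each splitting $\ell_1+\ell_2=\ell$ with $|\ell_j|\leq N/3+2$, the expression
\begin{equation*}
\mathcal{I}^{m'}(\xi) = \int_0^t \tau_{m'}(s) \int_{\R^3} e^{is\Phi_{\kappa_1\kappa_2}(\xi,\eta)}\frac{|\xi-\eta|}{|\eta|}\widehat{g_1}(s,\xi-\eta)\widehat{g_2}(s,\eta)\,\mathrm{d}\eta\mathrm{d}s, \qquad g_j=\Omega^{\ell_j}f_{\kappa_j},
\end{equation*}
restricted to $|\xi|\approx 2^k$, and show that the sum over $m'\leq m$ is controlled by the right-hand side of \eqref{fhatimp}.

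\textbf{Dyadic and angular decomposition.} Localize each input via $Q_{j_r k_r}$ with $j_r\geq\max(-k_r,0)$, and further decompose in the angular parameter $\varphi_{p+k+\min(k_1,k_2)}(\xi\wedge\eta)$ with $p\leq 0$. For the input profiles, \eqref{aprioriE} gives $\|Q_{j_r k_r}g_r\|_{L^2}\lesssim \e 2^{-j_r-k_r-20k_r^+}2^{C\e m}$, while \eqref{aprioriLinfty} gives $\|\widehat{Q_{j_r k_r}g_r}\|_{L^\infty}\lesssim\e 2^{-2k_r-j_r/2-20k_r^+}2^{C\e m}$, and \eqref{aprioriLinfty4} provides the uniform $L^\infty$ control. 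Applying Lemma \ref{LemAIBP} with $M=\lceil 1/\nu\rceil$ and $\kappa\approx |\ell|\log_2$ of the appropriate rotation gain, the contribution of $\mathcal{I}^{m'}$ is negligible (of order $2^{-m}$ or better) unless
\begin{equation*}
p\gtrsim \tfrac{1}{2}\bigl(-m'-\min(k,k_1,k_2)+\nu m'\bigr), \qquad \min(j_1,j_2)\geq (1-\nu)m'.
\end{equation*}
Thus the effective angular window has measure $\lesssim 2^{2p}$, gaining a factor of $2^{-m'-\min(k,k_1,k_2)}$ that offsets the time integration.

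\textbf{Bilinear estimate and case analysis.} Within the admissible range of $(p,j_1,j_2,k_1,k_2)$, I would place one factor in $L^\infty_\eta$ (using \eqref{aprioriLinfty} or \eqref{aprioriLinfty4}) and the other in $L^2_\eta$ (using \eqref{aprioriE}), combined with the support measure from the angular and Littlewood-Paley cutoffs. The prefactor $|\xi-\eta|/|\eta|\lesssim 2^{k_1-k_2}$ (after reducing to parallel frequencies) must be balanced against the powers of $2^{k_r}$ from the volume. Summing the time factor $2^{m'}$ against the angular gain $2^{2p}$ and the Sobolev-type gains from $j_r\geq (1-\nu)m'$, one finds that each dyadic piece contributes $\lesssim \e^2 2^{-20k^+}\max(2^{\delta m'},2^{-k})2^{(C\e+10\delta)m'}$, modulo admissible losses in the parameters $\nu,\delta$. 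Finally sum over $k_1,k_2\in\Z$, $j_1,j_2$, $p\leq 0$, and $m'\leq m$; the logarithmic losses in the summation over dyadic parameters are absorbed into the $2^{12\delta m}$ buffer.

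\textbf{Main obstacle.} The delicate case is when both inputs sit at low frequency and the output is very low, where the $|\eta|^{-1}$ prefactor is singular and angular IBP is least efficient (because $\min(k,k_1,k_2)$ is very negative, pushing the effective $p$ threshold up). Here one cannot restrict to nearly parallel frequencies in a meaningful way, so instead I would rely on the $2^{-k}$ factor in \eqref{fhatimp}: using Bernstein to convert $L^2$ to $L^\infty$ costs a factor of $2^{3k_2/2}$, but the decomposition \eqref{aprioriE} allows one to trade this with a factor of $2^{-j_2}$ and exploit $j_2\geq -k_2$, producing exactly the $2^{-k}$ growth permitted by \eqref{fhatimp}. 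A secondary difficulty is keeping the number of rotation vector fields under $N/3+2$ while still having spare rotations available in \eqref{aprioriLinfty}-\eqref{aprioriLinfty4}, which forces the choice of the constant $N/3+2$ (leaving $\sim N/2-(N/3+2)$ additional rotations usable in the bilinear estimates).
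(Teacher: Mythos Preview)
Your proposal has the right ingredients (Duhamel, angular integration by parts via Lemma \ref{LemAIBP}, dyadic decomposition), but it misses the mechanism that actually produces the improvement, and several technical statements are incorrect or too vague to close.

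\textbf{The missing iteration.} The paper does \emph{not} run a single bootstrap at the target exponent. Instead it proves an iterative claim: if
\[
\|\widehat{P_k\Omega^{\ell}f}(t)\|_{L^\infty} \lesssim \e\, 2^{-20k^+}\max(2^{\beta m},2^{-k})\,2^{(M\delta+C\e)m}
\]
holds for some $(\beta,M)$, then the same bound holds with $\beta'=2\beta/3$ and $M'=2M/3+4$. Starting from $(\beta,M)=(1,0)$ (which is Lemma \ref{LemBB}) and iterating finitely many times pushes $\beta$ down to $\delta$, while $M$ converges to the fixed point $12$. The parameter $\beta$ enters through a frequency threshold: one splits the restricted integral according to whether $k_2\le -\beta m$ or $k_2\ge -\beta m$, and the two subcases yield respectively a $2^{-k/2}$ bound (to be interpolated with the basic $2^{-3k/2}$ bound \eqref{aprioriLinfty2}) and the gain $2^{2\beta m/3}$ per factor. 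Your proposal never introduces such a threshold and never identifies a self-improving step; the vague sentence ``one finds that each dyadic piece contributes $\lesssim \e^2\,2^{-20k^+}\max(2^{\delta m'},2^{-k})\,2^{(C\e+10\delta)m'}$'' is exactly the step that requires a nontrivial argument.

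\textbf{Specific technical problems.} (i) Your use of Lemma \ref{LemAIBP} is stated backwards: the lemma says the contribution is negligible \emph{when} $p$ is above the threshold and $\min(j_1,j_2)\le(1-\nu)m$; the surviving region is $p$ \emph{below} the threshold (nearly parallel frequencies) \emph{or} $\min(j_1,j_2)\ge(1-\nu)m$. (ii) After angular restriction the paper uses $L^\infty\times L^\infty$ on the inputs together with the small volume $2^{2p_0+3\min(k_1,k_2)}$ of the cone, not an $L^\infty\times L^2$ estimate; your scheme does not explain how the angular gain combines with an $L^2$ factor. (iii) Your ``main obstacle'' paragraph claims that for very low frequencies one cannot restrict to parallel interactions; in fact the paper \emph{does} use the parallel restriction there as well (the bound \eqref{newest2}), and the low-frequency difficulty is handled instead by the interpolation with \eqref{aprioriLinfty2} mentioned above.
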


\medskip
Note that, combining Lemmas \ref{lemImp} and \ref{LemBB} yields a useful bound: for all $t \approx 2^m$ we have
\begin{equation}
\label{fhatimpcor2}
\sup_{|\ell|\leq N/3+2} \sum_k 2^{15k_+} 2^k \|\widehat{P_k\Omega^{\ell} f}(t)\|_{L^\infty} \lesssim \e 2^{\delta'm},\quad \delta':=C\varepsilon+13\delta.
\end{equation}
This follows by using \eqref{aprioriLinfty3} for the sum over $k\leq -2m$,
and \eqref{fhatimp} and \eqref{aprioriLinfty2} when summing over $k \geq -2m$.

\medskip
\begin{proof}
The bound \eqref{fhatimp} is the consequence of iterating many times the following claim, starting at $\beta=1$ and $M=0$ (see Lemma \ref{LemBB}):

\medskip
\underline{\it{Claim.}}
Suppose for some $\beta,M>0$ we have
\begin{equation}\label{fhatimp01}
\sup_{|\ell|\leq N/3+2} \|\widehat{P_k\Omega^{\ell}f}(t)\|_{L^\infty} \lesssim \e 2^{-20k^+} \max\big( 2^{\beta m}, 2^{-k}\big) 2^{(M\delta+C\e)m},
\end{equation}
for all $t\approx 2^m$, then for the same $t$ we also have, with $\beta'=2\beta/3$ and $M'=2M/3+4$, that
\begin{equation}\label{fhatimp02}
\sup_{|\ell|\leq N/3+2} \|\widehat{P_k\Omega^{\ell}f}(t)\|_{L^\infty} \lesssim \e 2^{-20k^+} \max\big( 2^{\beta'm}, 2^{-k}\big) 2^{(M'\delta+C\e)m}.
\end{equation}

To prove the claim, assume that (\ref{fhatimp01}) holds. Using the Duhamel formula (\ref{duhamel})
and the commutation formula (\ref{commomg}), in the same way as in the proof of Lemma \ref{LemBB},
we see that (\ref{fhatimp02}) would follow if we can prove the estimate
\begin{align}
\label{fhatimp1}
\begin{split}
\big|I_k[g_1,g_2](\xi)\big| := \bigg|\varphi_k(\xi)\int_{\R^3} e^{is(|\xi|\pm|\xi-\eta|\pm|\eta|)}
  \frac{|\xi-\eta|}{|\eta|} \widehat{g_1}(\xi-\eta) \widehat{g_2}(\eta) \,\mathrm{d}\eta\bigg|
\\
\lesssim\e^2 2^{-20k^+}2^{(-1+M'\delta+C\e)m} \max\big( 2^{\beta'm}, 2^{-k}\big),
\end{split}
\end{align}
 for all $s\approx 2^m$ and all functions $g_r$ (which are $g_r=\Omega^{\ell_r}f_{\kappa_r}$, see the proof of Lemma \ref{LemBB}) satisfying
\begin{align}\label{fhatimp1'}
\sup_{|\ell|\leq N/2}{\| \Omega^{\ell}P_kg_r \|}_{L^2} &\lesssim \e2^{-20k^+}2^{C\e m}, 
&
{\| \what{Q_{j,k} g_r} \|}_{L^\infty} &\lesssim \e2^{-20k^+} 2^{-j/2-2k}2^{C\e m},
\\
\label{fhatimp1''}
& & \|\what{P_kg_r}\|_{L^\infty}&\lesssim\e 2^{-20k^+}2^{(M\delta+C\e)m} \max\big( 2^{\beta m}, 2^{-k}\big)
\end{align} for $r=1,2$. We decompose the inputs $g_{r}$ into $Q_{j_rk_r}g_r$ for $r=1,2$,
 and divide the proof into two cases.

(1) Assume that $\min(j_1,j_2) \geq (1-4\delta)m$ or $\max(j_1,j_2) \geq 2m$. To treat the case when both inputs have large spatial localization it suffices to use the second inequality in \eqref{fhatimp1'} and estimate
\begin{align}
 \label{fhatimp9}
\begin{split}
\big| I_k[Q_{j_1k_1}g_1, Q_{j_2k_2}g_2](\xi) \big| \lesssim 2^{k_1-k_2} \int_{\R^3}
  \big| \widehat{Q_{j_1k_1}g_1}(\xi-\eta) \big| \, \big| \widehat{Q_{j_2k_2}g_2}(\eta) \big| \,\mathrm{d}\eta
  \\
\lesssim\e^2 2^{C\e m}2^{k_1-k_2} \cdot 2^{-20k_1^+}2^{-j_1/2-2k_1} \cdot 2^{-20k_2^+}2^{-j_2/2-2k_2} \cdot 2^{3\min(k_1,k_2)}
\\
\lesssim\e^2 2^{-20k_1^+} 2^{-20k_2^+} 2^{-\max(k_1,k_2)} \cdot 2^{-(j_1+j_2)/2} 2^{C\e m}.
\end{split}
\end{align}
Summing over all indices $j_1,j_2,k_1,k_2$ with $\min(j_1,j_2) \geq (1-4\delta)m$, or $\max(j_1,j_2) \geq 2m$, we obtain
\begin{align}
\label{fhatimp10}
\begin{split}
\sum_{\substack{\min(j_1,j_2) \geq (1-4\delta)m,\\\textrm{or }\max(j_1,j_2)\geq 2m}} \big| I_k[Q_{j_1,k_2}g_1, Q_{j_2,k_2}g_2](\xi) \big|
  \lesssim\e^2 2^{-k-20k^+} 2^{-m} 2^{(C\e+4\delta) m},
\end{split}
\end{align}
which is stronger than (\ref{fhatimp1}).

(2) Assume $\min(j_1,j_2)\leq (1-4\delta)m$ and $\max(j_1,j_2)\leq 2m$. We can then be able to apply Lemma \ref{LemAIBP}
in combination with the first inequality in \eqref{fhatimp1'}, see the first condition in \eqref{assump}, to restrict the integral to the region where
\[|\xi\wedge\eta|\lesssim 2^{p_0+k+\min(k_1,k_2)},\quad p_0=-\frac{m+\min(k,k_1,k_2)}{2}+2\delta m.
\]
This then restricts the integral to a cone of angular aperture $2^{p_0}$ and radius $2^{\min(k_1,k_2)}$; arguing as in \eqref{fhatimp9},
we obtain that
\begin{equation}\label{newest}
\big| I_k[Q_{j_1k_1}g_1, Q_{j_2k_2}g_2](\xi) \big| \lesssim 2^{k_1-k_2}\cdot 2^{2p_0+3\min(k_1,k_2)}\cdot\|\what{Q_{j_1k_1}g_1}\|_{L^\infty}\cdot\|\what{Q_{j_2k_2}g_2}\|_{L^\infty}.
\end{equation} If $k_2\geq k-10$, then we have \[k_1-k_2+2p_0+3\min(k_1,k_2)\leq -m-k+3(k_1+k_2)/2+4\delta m,\]
which implies, using the second inequality in \eqref{fhatimp1'}, that
\[
\big| I_k[Q_{j_1k_1}g_1, Q_{j_2k_2}g_2](\xi) \big| \lesssim \e^22^{-20k_1^+-20k_2^+}2^{-m-k}2^{(4\delta+C\e)m},
\]
which implies (\ref{fhatimp1}) upon summing over $(j_r,k_r)$.
If $k_2\leq k-10$, then $|k-k_1|\leq 5$ and \eqref{newest} gives
\begin{equation}
\label{newest2}
\big| I_k[Q_{j_1k_1}g_1, Q_{j_2k_2}g_2](\xi) \big| \lesssim
  \e^2 2^{(-1+4\delta)m} \cdot 2^{k_1+k_2}\cdot\|\what{Q_{j_1k_1}g_1}\|_{L^\infty}\cdot\|\what{Q_{j_2k_2}g_2}\|_{L^\infty}.
\end{equation}
Now if $k_2\leq -\beta m$, then in (\ref{newest2}) we may use the second inequality in (\ref{fhatimp1'})
to bound the first $L^\infty$ factor and (\ref{fhatimp1''}) to bound the second, obtaining
\[\big| I_k[Q_{j_1k_1}g_1, Q_{j_2k_2}g_2](\xi) \big| \lesssim \e^22^{-20k^+} 2^{(-1+4\delta+C\e)m}\cdot2^{-k/2}2^{(M\delta+C\e)m}.
\] This does not imply (\ref{fhatimp1}), but upon integration in time, this implies that
\[\sup_{|\ell|\leq N/3+2} \|\widehat{P_k\Omega^{\ell}f}(t)\|_{L^\infty} \lesssim \e 2^{-20k^+}2^{-k/2} 2^{((M+4)\delta+C\e)m},
\] which then implies (\ref{fhatimp02}) via interpolation with (\ref{aprioriLinfty2}) (we obtain $M'=(M+4)/2$).

 Finally, if $k\geq k_2\geq -\beta m$, then the second inequality in (\ref{fhatimp1'}) and (\ref{fhatimp1''}) implies, 
for $k_*\in\{k,k_2\}$, that
\[
2^{k_*}\|\what{Q_{j_*k_*}g_r}\|_{L^\infty}\lesssim \e2^{-20k_*^+}\min\big(2^{C\e m-k_*/2},2^{(M\delta+C\e)m}2^{\beta m+k_*}\big)
  \lesssim\e2^{-20k_*^+}2^{\beta m/3}2^{(M\delta/3+C\e)m}.
\] 
This then gives
\[ \big| I_k[Q_{j_1k_1}g_1, Q_{j_2k_2}g_2](\xi) \big| \lesssim \e^22^{-20k_1^+-20k_2^+}\cdot 2^{(-1+2\beta/3)m}\cdot 2^{(4\delta+2M\delta/3+C\e)m},
\] 
which again implies (\ref{fhatimp1}) upon summation. This completes the proof.
\end{proof}

\medskip

\medskip
Another direct consequence of Lemma \ref{lemImp} is the following bound for time derivative of the profile:

\begin{lemma}\label{Lemdtf}
Under the assumptions \eqref{aprioriE}, for all $t \approx 2^m$, $m\geq 0$, we have the following estimates:
\begin{align}
\label{dtf1}
& \sup_{|\ell|\leq N/3+2} \|P_k\Omega^{\ell}\partial_t f(t)\|_{L^2} \lesssim \e^2  2^{-20k^+} 2^{-(1-C\e)m},
\\
\label{dtf2}
& \sup_{|\ell|\leq N/3+2} \|\widehat{P_k\Omega^{\ell}\partial_t f}(t)\|_{L^\infty} \lesssim \e^2 2^{-m} 2^{-20k^+} 2^{-k+(\delta'+4\delta)m}.
\end{align}

\end{lemma}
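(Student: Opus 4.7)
The plan for \eqref{dtf1} is to work in physical space and use only the a priori bounds of Lindblad, while for \eqref{dtf2} the plan is to work in Fourier space and essentially run the bilinear estimate from the Claim proved inside Lemma \ref{lemImp} but without integrating in time.

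For the $L^2$ bound \eqref{dtf1}, I will start from the identity $\partial_t f = e^{it|\nabla|}(u\Delta u)$ from \eqref{profileeq}. Since $e^{it|\nabla|}$ is an $L^2$ isometry and commutes with $\Omega$ and with Littlewood-Paley projections, $\|P_k\Omega^\ell \partial_t f\|_{L^2} = \|P_k\Omega^\ell(u\Delta u)\|_{L^2}$. Commuting $\Omega^\ell$ past the product (and using that $\Omega$ commutes with $\Delta$) and performing a paraproduct decomposition, every term has the form $P_k(P_{k_1}\Omega^{\ell_1}u \cdot P_{k_2}\Omega^{\ell_2}\Delta u)$ with $|\ell_1|+|\ell_2|\leq N_1+2$. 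I place the low-frequency factor in $L^\infty$ using \eqref{Aliinfty} (which gives $\lesssim \e 2^{-m+C\e m}$ modulo the $\langle |x|-t\rangle^{C\e}$ weight, absorbed into $2^{C\e m}$) and the high-frequency factor in $L^2$ using \eqref{Ali0}. The high-frequency decay $2^{-20k^+}$ is produced by Bernstein, distributing up to $20$ extra derivatives on the high factor; these derivatives are affordable because $N_1 + 22 \ll N$.

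For the $L^\infty$ Fourier bound \eqref{dtf2}, I will start from the Duhamel identity in Fourier space,
\begin{equation*}
\widehat{\partial_t f}(t,\xi) = \sum_{\kappa_1,\kappa_2\in\{\pm\}}\kappa_1\kappa_2\, I_{\kappa_1\kappa_2}[f_{\kappa_1},f_{\kappa_2}](t,\xi),
\end{equation*}
combined with the commutation formula \eqref{commomg} to reduce everything to bounds on $|I_k[g_1,g_2](\xi)|$ with $g_r=\Omega^{\ell_r}f_{\kappa_r}$, $|\ell_1|+|\ell_2|\leq N_1+2$, for $g_r$ satisfying \eqref{fhatimp1'}--\eqref{fhatimp1''} with $\beta=\delta$ and $M=12$ (the values provided by Lemma \ref{lemImp}). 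I then proceed exactly as in the two-case analysis of the Claim inside the proof of Lemma \ref{lemImp}: split each input $g_r$ into spatial pieces $Q_{j_rk_r}g_r$, and distinguish between (i) the regime $\min(j_1,j_2)\geq (1-4\delta)m$ or $\max(j_1,j_2)\geq 2m$, where Cauchy-Schwarz as in \eqref{fhatimp9}--\eqref{fhatimp10} suffices; and (ii) the regime $\min(j_1,j_2)\leq (1-4\delta)m$ and $\max(j_1,j_2)\leq 2m$, where Lemma \ref{LemAIBP} restricts the angular support to a cone of aperture $2^{p_0}$ with $p_0 = -(m+\min(k,k_1,k_2))/2 + 2\delta m$ and the estimate \eqref{newest}--\eqref{newest2} applies with the $L^\infty$ inputs now bounded by \eqref{fhatimp}. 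Summing the geometric series in $(j_r,k_r,\ell_r)$ yields $|I_k[g_1,g_2](\xi)|\lesssim \e^2 2^{-20k^+}2^{-m-k+(\delta'+4\delta)m}$, which is \eqref{dtf2}.

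The main obstacle is in \eqref{dtf2}, and specifically in producing the factor $2^{-k}$ (as opposed to the looser $\max(2^{\delta m}, 2^{-k})$ of Lemma \ref{lemImp}). This forces one to be careful in the low-output case $k\leq -\beta m$: there, unlike in Lemma \ref{lemImp} where one integrated in time to buy back the loss via interpolation with \eqref{aprioriLinfty2}, here one has no time integration available and must instead directly combine the Cauchy-Schwarz estimate in the large-$\min(j_1,j_2)$ regime with the volume factor $2^{2p_0+3\min(k_1,k_2)}$ coming from the angular cone in the small-$\min(j_1,j_2)$ regime, and then sum over $k_2$ down to $-m$. The bookkeeping of how $p_0$ depends on $\min(k,k_1,k_2)$, and the need to spend one additional $2^{-k}$ factor compared to the analysis in Lemma \ref{lemImp}, is the source of the extra $4\delta$ in the exponent $\delta'+4\delta$.
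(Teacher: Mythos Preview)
Your proposal is correct and follows essentially the same route as the paper's proof. The paper's argument is literally two sentences: for \eqref{dtf1} one invokes \eqref{profileeq}, H\"older, and the decay \eqref{Aliinfty}; for \eqref{dtf2} one combines \eqref{commomg}, the estimate \eqref{newest2}, and \eqref{fhatimp}, following the case analysis inside Lemma~\ref{lemImp}. Your write-up expands exactly these two sentences.

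One remark on your ``main obstacle'' paragraph: you somewhat over-complicate the issue. Once Lemma~\ref{lemImp} is in hand, the bound \eqref{fhatimp} with $\max(2^{\delta m},2^{-k})$ is already available for the inputs, and the argument goes through without any new idea. In the regime you flag ($k$ small), the factor $2^{-k}$ already dominates $2^{\delta m}$, so there is nothing to recover; in the complementary regime $k\gtrsim -\delta m$, the discrepancy between $2^{-k}$ and $2^{\delta m}$ is at most $2^{O(\delta)m}$ and is absorbed into the exponent $(\delta'+4\delta)m$. No time-integration trick is needed here precisely because the iteration producing \eqref{fhatimp} has already been carried out; you are simply plugging the endpoint bound back into the same bilinear template. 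The ``extra $4\delta$'' you mention comes directly from the $2^{4\delta m}$ loss in the cone-aperture parameter $p_0$, exactly as in \eqref{newest}--\eqref{newest2}, not from any new bookkeeping.
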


\begin{proof}
To prove \eqref{dtf1} is suffices to use \eqref{profileeq}, H\"older's inequality, and the $L^\infty$ decay \eqref{Aliinfty}.
To prove \eqref{dtf2} it suffices to combine the commutation formula \eqref{commomg}, the estimate \eqref{newest2} and \eqref{fhatimp}, 
and follow the proof of Lemma \ref{lemImp}.
\end{proof}

Finally, we have the following improvement of (\ref{fhatimp}) for $Q_{jk}f$:

\begin{lemma}\label{Lemspe}
For all $t \approx 2^m$, $m\geq 0$ we have
\begin{align}
\label{fhatimp'}
\begin{split}
& \sup_{|\ell|\leq N/3+1} \big\|\what{Q_{jk}\Omega^{\ell}f}(t) \big\|_{L^\infty} \leq \e 2^{-k} 2^{-m/16},
  \\ & \quad \mbox{when} \quad -7m/8\leq k\leq (-1/2+2\gamma)m,\quad j\geq m-\gamma m.
\end{split}
\end{align}
\end{lemma}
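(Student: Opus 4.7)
The plan is to apply Duhamel's formula to $\widehat{Q_{jk}\Omega^\ell f}$ and bound the resulting bilinear pieces, leveraging the improved Fourier $L^\infty$ bounds from Lemmas~\ref{lemImp} and \ref{Lemdtf}, the angular integration by parts of Lemma~\ref{LemAIBP}, and the improved $L^2$ bounds from \eqref{aprioriE} that exploit the extra spatial localization inherent in $Q_{jk}$. The strategy is that in the restricted parameter range, the combination of these ingredients produces the $2^{-m/16}$ improvement over Lemma~\ref{lemImp}.

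First I would write
$$\widehat{Q_{jk}\Omega^\ell f}(t,\xi)=\widehat{Q_{jk}\Omega^\ell f}(0,\xi)+\sum_{\kappa_1,\kappa_2}\kappa_1\kappa_2\int_0^t\widehat{Q_{jk}\Omega^\ell I_{\kappa_1\kappa_2}[f_{\kappa_1},f_{\kappa_2}]}(s,\xi)\,ds,$$
where the Schwartz initial data gives a contribution that decays rapidly in $j$, hence is acceptable. For the time integral, decompose $s\approx 2^{m'}$ with $0\leq m'\leq m$, and use the convolution representation $\widehat{Q_{jk}g}=\varphi_{[k-2,k+2]}(\xi)\cdot(\widehat{\varphi_j^{(k)}}\ast\widehat{P_kg})(\xi)$ together with the frequency decomposition of each input, $g_r=\sum_{j_r,k_r}Q_{j_r,k_r}g_r$. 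For each $(m',j_1,k_1,j_2,k_2)$, apply Lemma~\ref{LemAIBP} to restrict the bilinear $\zeta$-integration to nearly parallel frequencies with angular aperture $2^{p_0}$, and then bound the restricted integral by pairing the improved $L^\infty$ bound (Lemma~\ref{lemImp}) on one input against the $L^2$ bound from \eqref{aprioriE} on the other (using Cauchy--Schwarz in $\zeta$). The extra factor $\widehat{\varphi_j^{(k)}}$ provides smoothing in $\eta$ at scale $2^{-j}$, yielding further decay via integration by parts in $\eta$ when $s|\nabla_\eta\Phi_{\kappa_1\kappa_2}|\gtrsim 2^j$, and via direct averaging estimates otherwise.

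The main obstacle will be the regime $m'\approx j\approx m$, where the oscillation of the bilinear phase in $\eta$ at scale $2^{-j}$ is only marginally better than the smoothing scale provided by $\widehat{\varphi_j^{(k)}}$, and neither the $L^2$ decay from $Q_{jk}$ alone nor the $L^\infty$ Fourier bounds from Lemmas~\ref{lemImp} and \ref{Lemdtf} gives enough gain by itself. In this range, the estimate has to come from a careful interplay of these bounds, tracking also the bilinear symbol $|\eta-\zeta|/|\zeta|$. The specific restrictions $-7m/8\leq k\leq(-1/2+2\gamma)m$ and $j\geq m-\gamma m$ are chosen precisely so that, once the small-frequency exclusion $k>-7m/8$ cuts off unfavorable contributions from very low-frequency inputs and the upper bound $k<(-1/2+2\gamma)m$ ensures the parallel-frequency reduction of Lemma~\ref{LemAIBP} applies with sufficient margin, summing the resulting bounds over all dyadic pieces yields the desired $\varepsilon 2^{-k}2^{-m/16}$, in the spirit of the dyadic case analysis carried out in the proofs of Lemmas~\ref{LemBB} and \ref{lemImp}.
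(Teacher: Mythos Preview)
Your proposal has the right overall framework (Duhamel, angular integration by parts, dyadic case analysis), but it is missing the two specific mechanisms that actually produce the $2^{-m/16}$ gain in the paper's proof.

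First, the role of $Q_{jk}$ is exploited through a $\xi$-derivative (``finite speed of propagation'') argument, not through the $\eta$-integration you describe. For inner times $s\approx 2^n$ with $n\le(1-2\gamma)m$, or when $\min(j_1,j_2)\le(1-2\gamma)m$, the paper shows $|\partial_\xi^\alpha I_k[g_1,g_2](s,\xi)|\lesssim 2^{|\alpha|(1-2\gamma)m}$ and then uses $j\ge(1-\gamma)m$ to gain arbitrary decay after applying $Q_{jk}$. Your sentence about ``$\widehat{\varphi_j^{(k)}}$ provides smoothing in $\eta$ at scale $2^{-j}$'' points in this direction but is misattributed: the convolution is in $\xi$, and the relevant quantity is the $\xi$-regularity of $I_k$ (governed by $s\nabla_\xi\Phi$ and $j_1$), not $\nabla_\eta\Phi$. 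The same $\xi$-derivative argument is used again later in the case $\kappa_2=-\kappa_1$ with $|\xi/|\xi|+\kappa_1\eta/|\eta||\lesssim 2^{p_0}$, where $|\nabla_\xi\Phi|\lesssim 2^{p_0}$.

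Second, and more seriously, you never mention integration by parts in \emph{time}. After the angular reduction, in the genuinely resonant case $\kappa_2=-\kappa_1$, $|k_1-k_2|\le 10$, with $\xi,\eta$ nearly parallel in the same direction and $\max(j_1,j_2)\le 2n/5$, both $\nabla_\eta\Phi$ and $\nabla_\xi\Phi$ are small, so neither $\eta$-integration by parts nor the $\xi$-derivative trick helps. What saves the day is that $|\Phi|\gtrsim|\xi|\approx 2^k$; the paper integrates by parts in $s$, picking up a factor $(s\Phi)^{-1}$ and using the bounds on $\partial_t f$ from Lemma~\ref{Lemdtf}. This yields a bound $\sim\varepsilon^2 2^{-k}\cdot 2^{-k-n}$, and the lower bound $k\ge -7m/8$ together with $n\ge(1-2\gamma)m$ is exactly what turns $2^{-k-n}$ into the required $2^{-m/16}$. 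Your fallback to ``direct averaging estimates'' in this regime would only reproduce Lemma~\ref{lemImp}, not improve on it.
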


\begin{proof}
Fix a time $s\in[0,t]$, assume $s\approx 2^n$, where $0\leq n\leq m$. By Duhamel formula (\ref{duhamel3}), and the commutation formula (\ref{commomg}),
it suffices to prove that
\begin{equation}\label{fhatimp'b}
\|\mathcal{F}Q_{jk}\mathcal{F}^{-1}I_k[g_1,g_2](s,\xi)\|_{L^{\infty}}\lesssim \e^2 2^{-n-k}2^{-m/16},
\end{equation} 
where
\begin{align}
\label{fhatimp'a}
I_k[g_1,g_2](s,\xi) := \int_{\R^3} e^{is(|\xi|\pm|\xi-\eta|\pm|\eta|)}
  \frac{|\xi-\eta|}{|\eta|} \widehat{g_1}(\xi-\eta) \widehat{g_2}(\eta) \,\mathrm{d}\eta,
\end{align} and $g_r=\Omega^{\ell_r}f_{\kappa_r}$ as before. In particular $g_r$ satisfy the bounds
\begin{align}\label{estgr1}
\quad \sup_{|\ell|\leq N/2}{\| \Omega^{\ell}P_kg_r \|}_{L^2} &\lesssim \e2^{-20k^+}2^{C\e n}, 
\\
\label{estgr2}{\| \widehat{Q_{j,k} g_r }\|}_{L^\infty} &\lesssim \e2^{-20k^+} \min\big(2^{(1+C\e)n}, 2^{-j/2-2k}2^{C\e n}, 2^{-k+\delta'n}\big),
\\
\label{estgr3}{\| \widehat{Q_{j,k} \partial_tg_r }\|}_{L^2} &\lesssim \e 2^{-20k^+}2^{-(1-C\e )n}, \quad {\| \widehat{Q_{j,k} \partial_tg_r }\|}_{L^\infty}
  \lesssim \e 2^{-n}2^{-20k^+}2^{-k+\delta'n+4\delta n}.
\end{align}
We decompose $g_r$ into $Q_{j_rk_r}g_r$, and subdivide the proof of \eqref{fhatimp'b} 
in various cases along the same lines of the proof of \eqref{fhatimp} above.

{\it Case $\max(n,\min(j_1,j_2))\leq (1-2\gamma) m$}.
Without loss of generality assume $j_1\leq (1-2\gamma)m$; we differentiate \eqref{fhatimp'a},
using the fact that 
\[\|\nabla^{\alpha}\widehat{Q_{j_1k_1}g_1}\|_{L^2}\lesssim 2^{j_1|\alpha|}\|g_1\|_{L^2},\] 
we obtain that
\begin{align*}
\big|\partial_{\xi}^{\alpha}I_k[Q_{j_1k_1}g_1,Q_{j_2k_2}g_2](s,\xi)\big|
  \lesssim 2^{|\alpha|\max(s,|k|,j_1)}\cdot 2^{k_1-k_2}\|g_1\|_{L^2}\|g_2\|_{L^2}
\\ \lesssim 2^{(1-2\gamma)m|\alpha|}\cdot 2^{k_1-k_2}\|g_1\|_{L^2}\|g_2\|_{L^2}.
\end{align*}
Since $j\geq (1-\gamma)m$, by choosing $|\alpha|$ big enough we see that
\[\|\mathcal{F}Q_{jk}\mathcal{F}^{-1}I_k[Q_{j_1k_1}g_1,Q_{j_2k_2}g_2](s,\xi)\|_{L^{\infty}}
  \lesssim 2^{-j|\alpha|} 2^{(1-2\gamma)m|\alpha|}\cdot 2^{k_1-k_2}\|g_1\|_{L^2}\|g_2\|_{L^2}\lesssim 2^{-100m}.\]

{\it Case $\min(j_1,j_2) \geq (1-4\delta)n$ or $\max(j_1,j_2) \geq 3m$}.
The case $\max(j_1,j_2) \geq 3m$ can be simply treated as in \eqref{fhatimp9}, so we skip it.
We then assume $\min(j_1,j_2)\geq (1-4\delta)n$ and separate into different cases depending on the size of $k_2$ relative to $k$. Note also that $\min(j_1,j_2)\geq (1-4\delta-2\gamma)m$ since when $n\leq (1-2\gamma)m$ we must have $\min(j_1,j_2)\geq (1-2\gamma)m$.

\smallskip
\noindent
{\it Subcase 1: $k_2 \leq k-10$}.
In this case $k_2$ is the smallest frequency and $|k_1-k|\leq 10$.
If $k_2 \leq -2m/3$ we estimate
\begin{align*}
\begin{split}
\big| I_k[Q_{j_1,k_2}g_1, Q_{j_2,k_2}g_2](\xi) \big| & \lesssim 2^{k_1-k_2}
  {\|\widehat{Q_{j_1,k_1}g_1}\|}_{L^\infty} {\|\widehat{Q_{j_2,k_2}g_2}\|}_{L^\infty} 2^{3k_2}
  \\
& \lesssim\e^2 2^{k_1-k_2} \cdot 2^{-j_1/2-2k_1}\cdot 2^{-k_2+\delta'n} \cdot 2^{3k_2}
\\
& \lesssim\e^2 2^{-k_1+k_2} 2^{-j_1/2} 2^{\delta'n}
\\
& \lesssim\e^22^{-n-k}\cdot 2^{k_2+n/2}\cdot 2^{(\delta'+2\delta+\gamma)n}
\end{split}
\end{align*}
which suffices since $n\leq m$.
If instead  $k_2 \geq -2m/3$ we estimate
\begin{align*}
\begin{split}
\big| I_k[Q_{j_1,k_2}g_1, Q_{j_2,k_2}g_2](\xi) \big| &\lesssim 2^{k_1-k_2}
  {\|\widehat{Q_{j_1,k_1}g_1}\|}_{L^2} {\|\widehat{Q_{j_2,k_2}g_2}\|}_{L^\infty} \cdot 2^{3k_2/2}
  \\
&\lesssim\e^2 2^{k_1-k_2} \cdot 2^{-j_1-k_1} \cdot 2^{-k_2} 2^{(\delta'+C\e)n} \cdot 2^{3k_2/2}
\\
&\lesssim\e^2 2^{-j_1} \cdot 2^{-k_2/2} 2^{(\delta'+C\e)m}
\\
&\lesssim \e^22^{(-2/3+\delta'+4\delta+2\gamma+C\e)m}
\end{split}
\end{align*}
using that $n\leq m$, $-2m/3 \leq k_2 \leq k \leq -m/2 + 2\gamma m$ and $j_1\geq (1-4\delta-2\gamma)m$, which suffices.

\smallskip
\noindent
{\it Subcase 2: $k_2 \geq k-10$}.
Here we can use an $L^2\times L^2$ estimate and obtain
\begin{align*}
\begin{split}
\big| I_k[Q_{j_1,k_2}g_1, Q_{j_2,k_2}g_2](\xi) \big| & \lesssim 2^{k_1-k_2}
  {\|\widehat{Q_{j_1,k_1}g_1}\|}_{L^2} {\|\widehat{Q_{j_2,k_2}g_2}\|}_{L^2}
  \\
& \lesssim \e^2 2^{k_1-k_2}2^{-20k_1^+-20k_2^+} \cdot 2^{-j_1-k_1} 2^{C\e n} \cdot 2^{-j_2-k_2} 2^{C\e n}
\\
& \lesssim \e^2 2^{-j_1-j_2} 2^{-2k} 2^{-20k_1^+-20k_2^+}2^{C\e m}
\\
& \lesssim \e^2 2^{-20k_1^+-20k_2^+}2^{-2m-2k}\cdot 2^{(8\delta+4\gamma+C\e)m},
\end{split}
\end{align*}
which is more than sufficient since $k \geq -7m/8$ and $j_1,j_2 \geq (1-4\delta-2\gamma)m$.

{\it Case $\min(j_1,j_2) \leq (1-4\delta)n$}.
Using Lemma \ref{LemAIBP} we can restrict the angle between $\xi$ and $\eta$ in the integral $I_k$ in \eqref{fhatimp'a},
and reduce matters to estimating
\begin{align*}
\begin{split}
|I_{k,p_0}|[g_1,g_2](\xi) := \varphi_k(\xi) \int_{\R^3}
  \frac{|\xi-\eta|}{|\eta|} \big| \widehat{g_1}(\xi-\eta) \big| \, \big| \widehat{g_2}(\eta) \big|
  \, \chi\big(|\xi\wedge\eta| \, 2^{-(p_0+k+\min(k_1,k_2))}\big)\,\mathrm{d}\eta,
  \\ p_0:= -\frac{n + \min(k,k_1,k_2)}{2} + 2\delta n.
\end{split}
\end{align*}

\noindent
{\it Subcase 1: $k_2 \leq k_1-10$}.
In this case $|k_1-k|\leq 5$ and we estimate, using (\ref{fhatimpcor2}),
\begin{align*}
\begin{split}
| I_{k,p_0}|[Q_{j_1,k_2}g_1, Q_{j_2,k_2}g_2](\xi)  &\lesssim 2^{k_1-k_2}
  {\|\widehat{Q_{j_1,k_1}g_1}\|}_{L^\infty} {\|\widehat{Q_{j_2,k_2}g_2}\|}_{L^\infty} 2^{2p_0 + 3k_2}
\\
&\lesssim 2^{k_1} {\|\widehat{Q_{j_1,k_1}g_1}\|}_{L^\infty} \cdot 2^{k_2} {\|\widehat{Q_{j_2,k_2}g_2}\|}_{L^\infty} \cdot 2^{-n +4\delta n}
\\
&\lesssim \e^2 2^{-n+(4\delta+2\delta')n},
\end{split}
\end{align*}
which suffices since $k\leq -n/2+2\gamma n$ and $n\geq (1-2\gamma)m$. The case $k_1\leq k_2- 10$ is treated similarly and easier in view of the factor $2^{k_1-k_2}$.

\noindent
{\it Subcase 2: $|k_2 -k_1| \leq 10$, and $\max(j_1,j_2)\geq 2n/5$}.
Suppose $j_1\geq 2n/5$, we proceed similarly and estimate
\begin{align*}
\begin{split}
| I_{k,p_0}|[Q_{j_1,k_2}g_1, Q_{j_2,k_2}g_2](\xi) &\lesssim
  {\|\widehat{Q_{j_1,k_1}g_1}\|}_{L^\infty} {\|\widehat{Q_{j_2,k_2}g_2}\|}_{L^\infty} 2^{2p_0 + 3k_2}\\
&\lesssim 2^{3k_1/2} {\|\widehat{Q_{j_1,k_1}g_1}\|}_{L^\infty} \cdot 2^{3k_2/2} {\|\widehat{Q_{j_2,k_2}g_2}\|}_{L^\infty} \cdot 2^{-k}2^{-n +4\delta n}
\\
&\lesssim\e^2  2^{-k-n}2^{4\delta n}\cdot\min(2^{-(j_1+k_1)/2+C\e n}, 2^{k_1/2}) \cdot2^{C\e n},
\end{split}
\end{align*}
which suffices noticing that $n\geq (1-2\gamma)m$, and that
\[\min(-(j_1+k_1)/2,k_1/2)\leq -j_1/4\leq -n/10.\]

From now on we may assume $\max(j_1,j_2)\leq 2n/5$, so in particular $\min(k_1,k_2)\geq -2n/5$.
In this case we will have to consider the integral $I_{k,p_0}$ with the phase, namely
\begin{align*}
\begin{split}
I_{k,p_0}[Q_{j_1k_1}g_1,Q_{j_2k_2}g_2](\xi) & := \varphi_k(\xi) \int_{\R^3}e^{is\Phi(\xi,\eta)}
  \frac{|\xi-\eta|}{|\eta|} \widehat{Q_{j_1k_1}g_1}(\xi-\eta)\, \widehat{Q_{j_2k_2}g_2}(\eta)
  \\ & \times \, \chi\big(|\xi\wedge\eta| \, 2^{-(p_0+k+k_1)}\big)\,\mathrm{d}\eta, \qquad
  \Phi(\xi,\eta)= |\xi|-\kappa_1|\xi-\eta|-\kappa_2|\eta|.
\end{split}
\end{align*}For simplicity we also assume $\max(k_1,k_2)\leq 0$.
Recall that  in the region of integration we have $\big|\xi/|\xi|\pm\eta/|\eta|\big|\lesssim 2^{p_0}$.

{\it Case $\kappa_1=\kappa_2$, or when $\big|\xi/|\xi|+\kappa_1\eta/|\eta|\big|\lesssim 2^{p_0}$}.
If $\kappa_1=\kappa_2$, since $k\leq -m/2+2\gamma m$ and $k_1,k_2\geq -2m/5$, we know that in the region of integration we have
\[|\nabla_{\eta}\Phi|=\bigg|\frac{\eta}{|\eta|}+\frac{\eta-\xi}{|\eta-\xi|}\bigg|\gtrsim 1,
\] so we get $|I_{k,p_0}[Q_{j_1k_1}g_1,g_2]|\lesssim 2^{-100m}$ via integrating by parts in $\eta$ many times, noticing also that
\[\big|\nabla_\eta^\alpha\chi\big((\xi\wedge\eta)2^{-p_0-k-k_1}\big)\big|\lesssim 2^{|\alpha|(-p_0-k_1)},\quad -p_0-k_1\leq 7m/8,
\] due to Fa\`{a} di Bruno's formula and the observation that $(\xi 2^{-k})\wedge\eta$ has all $\eta$ derivatives bounded.

If $\kappa_2=-\kappa_1$, and $\big|\xi/|\xi|+\kappa_1\eta/|\eta|\big|\lesssim 2^{p_0}$,
then in the region of integration we have $|\nabla_\xi\Phi|\lesssim 2^{p_0+}$, which implies, via Fa\`{a} di Bruno's formula, that
\[\big|\nabla_{\xi}^{\alpha}e^{is\Phi}\big|\lesssim 2^{|\alpha|\max(m+p_0,(m-k)/2)}\lesssim 2^{(31m/32)|\alpha|}.\]
Similarly
\[\big|\nabla_\xi^\alpha\chi\big((\xi\wedge\eta)2^{-p_0-k-k_1}\big)\big|\lesssim 2^{|\alpha|(-p_0-k)},\quad -p_0-k\leq 31m/32,
\]
so by taking $\xi$ derivatives in the integral definition of $I_{k,p_0}$ we get that
\[\big|\partial_{\xi}^{\alpha}I_{k,p_0}[Q_{j_1k_1}g_1,Q_{j_2k_2}g_2](s,\xi)\big|\lesssim 2^{(31m/32)|\alpha|}\cdot 2^{k_1-k_2}\|g_1\|_{L^2}\|g_2\|_{L^2}.
\]
Since $j\geq (1-\gamma)m$, by choosing $|\alpha|$ big enough we see that
\[\|\mathcal{F}Q_{jk}\mathcal{F}^{-1}I_{k,p_0}[Q_{j_1k_1}g_1,Q_{j_2k_2}g_2](s,\xi)\|_{L^{\infty}}
  \lesssim 2^{-j|\alpha|} 2^{(31m/32)|\alpha|}\cdot 2^{k_1-k_2}\|g_1\|_{L^2}\|g_2\|_{L^2}\lesssim 2^{-100m}.\]

\bigskip
{\it Case $\kappa_2=-\kappa_1$, and $\big|\xi/|\xi|-\kappa_1\eta/|\eta|\big|\lesssim 2^{p_0}$}. We may assume $\kappa_1=1$, $\kappa_2=-1$, and $|\angle(\xi,\eta)|\lesssim 2^{p_0}$ (the other case being similar). In this case we have
\[|\Phi|=\big||\xi|-|\xi-\eta|+|\eta|\big|\gtrsim |\xi|\approx 2^{k},
\] so we will integrate by parts in time. Recall that to prove (\ref{fhatimp'}) it suffices to show
\begin{equation}
\label{fhatimp'd}
|J_{k,p_0}[Q_{j_1k_1}g_1,Q_{j_2k_2}g_2](\xi)|\lesssim\varepsilon^2 2^{-k-m/16},
\end{equation}
where
\[J_{k,p_0}[Q_{j_1k_1}g_1,Q_{j_2k_2}g_2](\xi)=\int_{\mathbb{R}}\tau_n(s)I_{k,p_0}[Q_{j_1k_1}g_1,Q_{j_2k_2}g_2](s,\xi)\,\mathrm{d}s.
\]
Now, integrating by parts in $s$, we get that
\begin{multline*}
J_{k,p_0}[Q_{j_1k_1}g_1,Q_{j_2k_2}g_2](\xi)=-\int_{\R} \tau_n^\prime(s)K[Q_{j_1k_1}g_1,Q_{j_2k_2}g_2](s,\xi)\,\mathrm{d}s
\\
-\int_{\R}\tau_n(s)K[Q_{j_1k_1} \partial_sg_1, Q_{j_2k_2}g_2](s,\xi)\,\mathrm{d}s
  -\int_{\R} \tau_n(s)K[Q_{j_1k_1}g_1, Q_{j_2k_2} \partial_sg_2](s,\xi)\,\mathrm{d}s,
\end{multline*} where
\begin{align*}
\begin{split}
K[Q_{j_1k_1}g_1,Q_{j_2k_2}g_2](s,\xi) :=  \int_{\R^3}
  \frac{|\xi-\eta|}{|\eta|\Phi} \big| \widehat{Q_{j_1k_1}g_1}(\xi-\eta) \big| \, \big| \widehat{Q_{j_2k_2}g_2}(\eta) \big|
  \, \chi\big(\angle (\xi,\eta)2^{-p_0}\big)\,\mathrm{d}\eta.
\end{split}
\end{align*}
Now we have
\begin{align*}
K[Q_{j_1k_1}g_1,Q_{j_2k_2}g_2](s,\xi)\lesssim
2^{-k}{\|\widehat{Q_{j_1,k_1}g_1}\|}_{L^\infty} {\|\widehat{Q_{j_2,k_2}g_2}\|}_{L^\infty} 2^{2p_0 + 3k_2}
\\
\lesssim 2^{-k}2^{3k_1/2} {\|\widehat{Q_{j_1,k_1}g_1}\|}_{L^\infty} \cdot 2^{3k_2/2} {\|\widehat{Q_{j_2,k_2}g_2}\|}_{L^\infty} \cdot 2^{-k}2^{-n +4\delta n}
\\
\lesssim \e^2 2^{-2k-n+(4\delta+C\e)n}
\lesssim \e^2 2^{-k}\cdot 2^{-k-n} 2^{(4\delta+C\e)n}
\end{align*}
which suffices since $k\geq -7m/8$ and $n\geq (1-2\gamma)m$; similarly we have
\begin{align*}
K[Q_{j_1k_1}\partial_tg_1,Q_{j_2k_2}g_2](s,\xi)\lesssim
2^{-k}{\|\widehat{Q_{j_1,k_1}\partial_tg_1}\|}_{L^\infty} {\|\widehat{Q_{j_2,k_2}g_2}\|}_{L^\infty} 2^{2p_0 + 3k_2}
\\
\lesssim 2^{-k}2^{3k_1/2} {\|\widehat{Q_{j_1,k_1}\partial_tg_1}\|}_{L^\infty} \cdot 2^{3k_2/2} {\|\widehat{Q_{j_2,k_2}g_2}\|}_{L^\infty} \cdot 2^{-k}2^{-n +4\delta n}
\\
\lesssim \e^2 2^{-2k-n-n+(\delta'+4\delta+C\e)n} \lesssim \e^2 2^{-k}\cdot 2^{-k-n} \cdot 2^{(\delta'+4\delta+C\e)n},
\end{align*}
which also suffices since $k\geq -7m/8$ and $n\geq (1-2\gamma)m$. This completes the proof.
\end{proof}

\bigskip
\section{Proof of Theorem \ref{Mainth1}: The Asymptotic System}\label{SecProof1}

The proof of Theorem \ref{Mainth1} will be built on the approximation Lemmas \ref{Lem--}--\ref{LemAux}.
We first state all these lemmas and then prove them in the remaining of this section.

\medskip
\subsection{The main Approximation Lemmas}\label{secMainLemmas}
In what follows, we will always assume $m\geq 1$,
and the a priori bounds \eqref{Ali0}-\eqref{Aliinfty} and their consequences, see Lemma \ref{lemEE}, Lemma \ref{LemBB} and the result in Subsection \ref{secImp}.

We will frequently write a vector $\xi\in\R^3$ in polar coordinates,
say $\xi=\rho\theta$ where $\rho=|\xi|$ and $\theta=\xi/|\xi|$; we may use similar notations such as $\xi=r\phi$, etc.
In these coordinates we will define $F_{\theta}(t,\rho):=\widehat{f}(t,\rho\theta)$, and similarly $F_{\phi}(t,r):=\widehat{f}(t,r\phi)$, etc.

\begin{lemma}[The $--$ interactions]\label{Lem--}
We have
\begin{align}
\label{Lem--.1}
I_{--}^m(t,\xi) \in \mathcal{R},
\end{align}
see \eqref{duhamel2} and Definition \ref{def} for notations.
\end{lemma}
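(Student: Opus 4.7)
The distinctive feature of the $(--)$-interaction is that the phase $\Phi_{--}(\xi,\eta) = |\xi| + |\xi-\eta| + |\eta|$ never vanishes and in fact satisfies $\Phi_{--} \gtrsim \max(|\xi|,|\xi-\eta|,|\eta|)$. I exploit this by a normal form transformation in time. Writing $e^{is\Phi_{--}} = (i\Phi_{--})^{-1}\partial_s e^{is\Phi_{--}}$ and setting
\[
T[f,g](s,\xi) := \frac{1}{4(2\pi)^{3/2}}\int_{\R^3} \frac{e^{is\Phi_{--}(\xi,\eta)}}{i\Phi_{--}(\xi,\eta)} \frac{|\xi-\eta|}{|\eta|}\widehat{f}(s,\xi-\eta)\widehat{g}(s,\eta)\,\mathrm{d}\eta,
\]
integration by parts in $s$ yields
\[
I_{--}^m(t,\xi) = \partial_t\bigl(\tau_m(t)T[f,\bar f](t,\xi)\bigr) - \tau_m'(t)T[f,\bar f](t,\xi) - \tau_m(t)\bigl(T[\partial_t f,\bar f] + T[f,\partial_t\bar f]\bigr)(t,\xi).
\]
Placing the first summand in $R_2 = \partial_t R'$ with $R'(t) := \tau_m(t)T[f,\bar f](t)$ and the other two in $R_1$, Definition \ref{def} reduces matters to proving $\|T[f,\bar f](t)\|_X \lesssim \varepsilon^2 \langle t \rangle^{-\gamma}$ and $\|T[\partial_t f,\bar f](t)\|_X + \|T[f,\partial_t\bar f](t)\|_X \lesssim \varepsilon^2 \langle t\rangle^{-1-\gamma}$ for $t \approx 2^m$.

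The bound on $T[f,\bar f]$ proceeds by dyadic decomposition and angular integration by parts. The commutation identity \eqref{commomg} distributes $\Omega^\ell$ across the two inputs (legal because $\Phi_{--}$, $1/\Phi_{--}$, and $|\xi-\eta|/|\eta|$ are all rotation-invariant); I further split each input into $P_{k_r}$-pieces, subject to $k \leq \max(k_1,k_2) + O(1)$. For each such triple I apply an analog of Lemma \ref{LemAIBP} (the smooth factor $1/\Phi_{--}$ contributes harmlessly to the amplitude bounds in its proof) with threshold $p_0 = -\tfrac{m+\min(k,k_1,k_2)}{2} + 2\delta m$, reducing the integral, up to fast-decaying errors, to the parallel region $|\xi\wedge\eta| \lesssim 2^{p_0 + k + \min(k_1,k_2)}$ of measure $\approx 2^{3\min(k_1,k_2) + 2p_0} = 2^{-m + O(\delta m)} \cdot 2^{2\min(k_1,k_2) - \min(k,k_1,k_2)}$. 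Combining this with the symbol bound $|\xi-\eta|/(|\Phi_{--}||\eta|) \lesssim 2^{k_1 - k_2 - \max(k,k_1,k_2)}$ and summing via \eqref{fhatimpcor2} and Lemma \ref{lemImp}, one arrives at
\[
\|T[f,\bar f](t)\|_X \lesssim \varepsilon^2 \, 2^{-(1 - O(\delta))m},
\]
which comfortably beats $\varepsilon^2 2^{-\gamma m}$ since $\delta \ll \gamma$.

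For the pieces of $R_1$, the factor $|\tau_m'(t)| \lesssim 2^{-m}$ upgrades the previous bound to $\varepsilon^2 2^{-(1+\gamma)m}$. For $T[\partial_t f, \bar f]$ (symmetrically $T[f,\partial_t\bar f]$) I rerun the same argument, feeding in Lemma \ref{Lemdtf}: the $L^\infty$-estimate $\|\widehat{P_k\Omega^\ell \partial_t f}\|_{L^\infty} \lesssim \varepsilon^2 2^{-m - k + (\delta' + 4\delta)m}$ supplies the extra $2^{-m}$, while the $L^2$-estimate $\|P_k\Omega^\ell\partial_t f\|_{L^2} \lesssim \varepsilon^2 2^{-(1-C\varepsilon)m}$ is used in the $L^\infty \times L^2$ pairings where needed. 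The resulting bound $\|T[\partial_t f,\bar f](t)\|_X \lesssim \varepsilon^2 \, 2^{-(2-O(\delta))m}$ is amply sufficient.

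The main technical burden is the case analysis over configurations $(k,k_1,k_2,j_1,j_2)$. Two subcases merit separate treatment: (i) when $\min(j_1,j_2) \geq (1-\delta)m$, Lemma \ref{LemAIBP} does not apply, and I estimate pointwise via the weighted $L^\infty$-bound \eqref{aprioriLinfty}, whose factor $2^{-(j_1+j_2)/2}$ directly supplies the required $2^{-(1-O(\delta))m}$ decay; (ii) when $\min(k,k_1,k_2) \ll -m/2$ the factor $1/\Phi_{--}$ contributes little, but the uniform bound \eqref{aprioriLinfty3}, combined with the small Fourier-support measure $2^{3\min(k_1,k_2)}$ and the fact that the $2^{k+15k^+}$ weight of the $X$-norm is harmless at low frequency, closes the estimate. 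The remaining dyadic configurations are routine within the framework of Lemmas \ref{LemBB}, \ref{lemImp}, \ref{Lemspe}, and \ref{Lemdtf}.
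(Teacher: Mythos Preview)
Your proposal is correct and takes essentially the same approach as the paper: exploit the ellipticity of $\Phi_{--}$ via a time integration by parts (the paper's operator $K_{--}$ is exactly your $T$), combine with the angular integration by parts of Lemma \ref{LemAIBP} in the main regime, and handle the extreme configurations (large $\min(j_1,j_2)$, or very small input frequencies) by direct $L^\infty$ estimates from Lemmas \ref{LemBB}, \ref{lemImp} and \ref{Lemdtf}. The only cosmetic difference is the order of the two integrations by parts --- the paper first restricts to the parallel cone and then integrates in time on the localized operator $I^m_{--,p_0}$, whereas you integrate in time first and absorb the harmless factor $1/\Phi_{--}$ into the amplitude before invoking the angular lemma; both orderings yield the same bounds.
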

This is proven in Section \ref{PrLem--} by exploiting oscillations in time
and making use of the angular integration by parts Lemma \ref{LemAIBP}.

We then look at low frequency outputs and prove the following:
\begin{lemma}[Low Frequency Outputs]\label{LemL1}
We have
\begin{align}
\label{LemL1.1}
|\xi|^{-1/5}\varphi_{\leq0} \big(\xi\langle t \rangle^{7/8}\big) I_{++}^m(t,\xi), \quad
|\xi|^{-1/5}\varphi_{\leq0} \big(\xi\langle t \rangle^{7/8}\big) I_{--}^m(t,\xi) \in \mathcal{R}.
\end{align}
Moreover, 
we have
\begin{align}
\label{LemL1.2}
|\xi|^{-1/5}\Big[ \varphi_{\leq0} \big(\xi\langle t \rangle^{7/8}\big) \big( I_{+-}^m(t,\xi) + I_{-+}^m(t,\xi) \big)
  - \frac{1}{2(2\pi)^{3/2}} \tau_m(t)h_\theta(t,\rho) \Big] \in \mathcal{R}
\end{align}
where
\begin{align}
\label{LemL1.2'}
\begin{split}
& h_\theta(t,\rho) :=  \varphi_{\leq0}(\rho\langle t \rangle^{7/8})
\int_\R \int_{\mathbb{S}^2} e^{it\r(1-\theta\cdot\phi)} \big| F_\phi(t,r) \big|^2 r^2 \,\mathrm{d}\phi \mathrm{d}r.
\end{split}
\end{align}

In particular, by Duhamel's formula \eqref{duhamel}-\eqref{duhamel3}, it follows that
\begin{align}
\label{LemL1.3}
\Big\| |\xi|^{-1/5} \Big( \varphi_{\leq -10}(\xi\langle t\rangle^{7/8})[ \widehat{f}(t,\xi) - \widehat{f}(0,\xi) ]
  + \frac{1}{2(2\pi)^{3/2}} H_{\theta}(t,\rho) \Big) \Big\|_X\lesssim \e^2,
\end{align}
where
\begin{align}
\label{LemL1.4}
H_\theta(t,\rho) := \varphi_{\leq-10}(\rho\langle t\rangle^{7/8})\int_0^t h_{\theta}(s,\rho) \, \mathrm{d}s.
\end{align}
\end{lemma}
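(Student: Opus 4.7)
The plan is to analyze $I^m_{\kappa_1\kappa_2}(t,\xi)$ at low output frequencies $|\xi|\lesssim\langle t\rangle^{-7/8}$ separately according to the sign configuration, since the phase $\Phi_{\kappa_1\kappa_2}$ dictates whether one is in a resonant regime. The $(++)$ and $(--)$ interactions are non-resonant at low output (indeed $|\Phi_{\kappa\kappa}(\xi,\eta)|\gtrsim|\eta|$ whenever $|\eta|\gtrsim|\xi|$), whereas $(+-)$ and $(-+)$ carry the resonant contribution that produces the bulk term $h_\theta$. Throughout, I would dyadically decompose the $\eta$-integral at scales $|\eta|\approx 2^{k_2}$, using the improved bounds on $\widehat{f}$ from Lemmas \ref{lemImp} and \ref{Lemspe}, together with the angular integration by parts of Lemma \ref{LemAIBP} whenever it is profitable to restrict to a thin cone of parallel frequencies.

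For the $(++)$ and $(--)$ contributions I would exploit the time oscillation by writing $e^{is\Phi}=(i\Phi)^{-1}\partial_s e^{is\Phi}$ and integrating by parts in $s$. The boundary term produced this way has the form $\partial_t R'$ required by the second component of Definition \ref{def}, while the remaining pieces involve $\tau_m'$ and $\partial_s\widehat{f}$, for which Lemma \ref{Lemdtf} supplies pointwise control. The gain $1/|\Phi|\lesssim 1/|\eta|$ in the non-resonant region, combined with the dyadic summation, yields the required $\langle t\rangle^{-1-\gamma}$ decay in the $X$-norm, even after multiplying by the singular weight $|\xi|^{-1/5}$ (which at $|\xi|\lesssim\langle t\rangle^{-7/8}$ costs at most $\langle t\rangle^{7/40}$). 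The very small input regime $|\eta|\lesssim\langle t\rangle^{-1/2}$, where time integration by parts is unproductive, is handled directly via the $L^\infty$ bound \eqref{aprioriLinfty3} and the smallness of the volume factor $|\eta|^3$.

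For $(+-)+(-+)$ the phase vanishes along a full direction of $\eta$ (parallel, resp.\ anti-parallel, to $\xi$), so time integration by parts fails and the leading contribution must be extracted algebraically. Writing $\xi=\rho\theta$, $\eta=r\phi$, one Taylor expands in $\rho/r$: the phase $\Phi_{-+}=\rho(1-\theta\cdot\phi)+O(\rho^2/r)$, the symbol $|\xi-\eta|/|\eta|=1+O(\rho/r)$, and $\widehat{f}(\xi-\eta)=\widehat{f}(-\eta)+O(\rho|\nabla\widehat{f}|)$. Performing the analogous expansion for $(+-)$ and symmetrizing via a reflection $\phi\mapsto-\phi$ on the sphere, the two contributions combine into a single integral that, after extending the radial variable to $\R$ by means of the conjugation symmetry $F_\phi(t,-r)=\overline{F_\phi(t,r)}$, matches $\tfrac{1}{2(2\pi)^{3/2}}\tau_m(t)h_\theta(t,\rho)$. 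Each of the Taylor remainders gains an extra power of $\rho$, which is more than enough to absorb the $|\xi|^{-1/5}$ loss and the $X$-norm weight $2^k\approx\rho$ at $k\leq 0$, producing the claimed acceptable remainder.

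Finally, \eqref{LemL1.3} follows from \eqref{LemL1.1}--\eqref{LemL1.2} by summation over $m$ and integration in time: the $R_1$ components of each $I^m_{\kappa_1\kappa_2}$ integrate absolutely thanks to the $\langle s\rangle^{-1-\gamma}$ decay, while the $\partial_s R'$ components telescope to a quantity controlled by $\|R'(t)\|_X\lesssim\e^2\langle t\rangle^{-\gamma}$, both absorbed by the outer localization $\varphi_{\leq-10}(\xi\langle t\rangle^{7/8})$ after a routine verification that this cutoff and the inner $\varphi_{\leq 0}$ differ by an acceptable error. I expect the main obstacle to be the bookkeeping for the resonant extraction in $(+-)+(-+)$: the three Taylor remainders (amplitude, phase, and input shift) must each be estimated in the critical $X$-norm with the singular $|\xi|^{-1/5}$ weight, and the symmetrization pairing the two sign configurations requires careful tracking of prefactors to recover precisely the coefficient $\tfrac{1}{2(2\pi)^{3/2}}$ claimed in \eqref{LemL1.2}.
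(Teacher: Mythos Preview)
Your proposal is correct and follows essentially the same approach as the paper: time integration by parts for the non-resonant $(++)/(--)$ terms using $|\Phi_{\kappa\kappa}|\gtrsim|\eta|$ (with a direct volume estimate for very small $|\eta|$), and a Taylor expansion of phase, symbol, and input shift in the $(+-)/(-+)$ case to extract the leading term $h_\theta$, followed by symmetrization. One minor difference is that the paper does not invoke the angular integration-by-parts Lemma~\ref{LemAIBP} or Lemma~\ref{Lemspe} here at all --- the low-output regime is handled purely by the time IBP and Taylor expansion, so you can omit those tools; also, for the input-shift remainder (your $\widehat{f}(\xi-\eta)-\widehat{f}(-\eta)$ term) the paper splits $f=\sum Q_{j_1k_1}f$ and balances the mean-value gain $2^{k+j_1/2-2k_1}$ against the $L^2$ bound $2^{-j_1-k_1}$, which is the one place your sketch is a bit under-specified.
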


The above Lemma is proven in \ref{PrLemL1}.
Notice that in \eqref{LemL1.1} we prove stronger bounds than what is needed for the terms to be considered acceptable remainders.
These bounds will be used later on in the proof of Lemma \ref{LemHL1}.

Next, we decompose $I_{\kappa_1\kappa_2}$ to distinguish between high and low frequency inputs. Define
\begin{equation}\label{fhigh}\quad \widehat{f_{\kappa_j}^0}(t,\xi)=\varphi_{\leq -10}(\xi\langle t\rangle^{7/8})\widehat{f_{\kappa_j}}(t,\xi);
  \quad \widehat{f_{\kappa_j}^1}(t,\xi)=\varphi_{> -10}(\xi\langle t\rangle^{7/8})\widehat{f_{\kappa_j}}(t,\xi),
\end{equation}
and
\begin{equation}\label{defim01}I_{\kappa_1\kappa_2}^{m,0}=I_{\kappa_1\kappa_2}^m[f_{\kappa_1}^0,f_{\kappa_2}]+I_{\kappa_1\kappa_2}^m[f_{\kappa_1}^1,f_{\kappa_2}^0];\quad I_{\kappa_1\kappa_2}^{m,1}=I_{\kappa_1\kappa_2}^m[f_{\kappa_1}^1,f_{\kappa_2}^1].
\end{equation}

The next lemma treats the high-low frequency interactions $I^{m,0}_{\kappa_1\kappa_2}$ and shows the appearance of
a low frequency bulk term correction. It will be proven in \ref{PrLemHL1}.

\begin{lemma}[High-Low Interactions]\label{LemHL1}
We have
\begin{align}
\label{LemHL1.1}
\varphi_{>0}(\xi\langle t\rangle^{7/8}) I_{-+}^{m,0}(t,\xi) 
\in \mathcal{R}.
\end{align}
Moreover
\begin{align}
\label{LemHL1.2}
\varphi_{>0}(\xi\langle t\rangle^{7/8}) \big[ I_{++}^{m,0}(t,\xi) - I_{+-}^{m,0}(t,\xi) \big]
  + i \rho F_\theta(t,\rho)\cdot B_\theta(t) \tau_m(t)\in \mathcal{R}
\end{align}
where
\begin{align}
\label{LemHL1.3}
& B_\theta(t) := \frac{1}{32\pi^3} \mathrm{Im} \Big( \int_0^\infty \! \int_{\mathbb{S}^2}
  e^{itr(1-\theta\cdot\phi)} H_\phi(t,r) \,r \,\mathrm{d}\phi \mathrm{d}r \Big),
\end{align} and $H_{\phi}(t,r)$ is defined in (\ref{LemL1.4}).
\end{lemma}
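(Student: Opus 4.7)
The plan is to analyze each term in the decomposition (\ref{defim01}) of $I^{m,0}_{\kappa_1\kappa_2}$ by exploiting two mechanisms: large-phase oscillation for the $-+$ contribution, and a bulk substitution via (\ref{LemL1.3}) for the main $++$ and $+-$ pieces. The overarching strategy is to reduce to the regime $|\eta|\lesssim\langle t\rangle^{-7/8}\ll|\xi-\eta|\approx|\xi|$, Taylor expand the high-frequency factor, and then replace the low-frequency input $\what{f^0}(\eta)$ by the explicit bulk $-\tfrac{1}{2(2\pi)^{3/2}}H_\phi(r)$.

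For (\ref{LemHL1.1}), I would split $I^{m,0}_{-+}$ into $I^m_{-+}[\bar f^0,f]$ (low frequency in the first slot) and $I^m_{-+}[\bar f^1,f^0]$ (low frequency in the second slot). The first piece has kernel $|\xi-\eta|/|\eta|\lesssim \langle t\rangle^{-7/8}/|\xi|$ because $|\xi-\eta|\lesssim\langle t\rangle^{-7/8}$ and $|\eta|\approx|\xi|\gtrsim\langle t\rangle^{-7/8}$, and placing it in $\mathcal{R}$ reduces to a direct bilinear estimate using Lemmas \ref{LemBB} and \ref{lemImp}. For the second piece the phase satisfies $\Phi_{-+}(\xi,\eta)\approx 2|\xi|\gtrsim\langle t\rangle^{-7/8}$, so I would integrate by parts in time, using $\tau_m(s)e^{is\Phi_{-+}}=\Phi_{-+}^{-1}\partial_s(\tau_m(s)e^{is\Phi_{-+}})-\tau_m'(s)\Phi_{-+}^{-1}e^{is\Phi_{-+}}$. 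This produces three contributions: a $\partial_s R'$ term of $R_2$-type in the sense of Definition \ref{def}; a $\tau_m'(s)$-boundary term with the extra $2^{-m}$ factor belonging to the $R_1$-class; and terms in which $\partial_s$ falls on the profiles, controlled by Lemma \ref{Lemdtf}. The factor $\Phi_{-+}^{-1}\sim|\xi|^{-1}$ compensates the $|\xi|$ in the kernel, and after commuting with $\Omega^\ell$ via (\ref{commomg}) one obtains the required $X$-bounds.

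For (\ref{LemHL1.2}), the pieces $I^m_{\kappa_1\kappa_2}[f^0_{\kappa_1},f_{\kappa_2}]$ with low frequency in the first slot are handled by the small-kernel argument as above. The main contribution comes from $I^m_{++}[f^1,f^0]$ and $I^m_{+-}[f^1,\bar f^0]$. There, since $|\eta|\lesssim\langle t\rangle^{-7/8}\ll|\xi-\eta|\approx|\xi|$, I would Taylor expand $\what{f^1}(\xi-\eta)=\what{f}(\xi)+O(|\eta|\nabla\what{f})$, $|\xi-\eta|=|\xi|+O(|\eta|)$, and $\Phi_{\kappa_1\kappa_2}(\xi,\eta)=|\eta|(\theta\cdot\phi-\kappa_2)+O(|\eta|^2/|\xi|)$; each correction carries an additional $|\eta|$ factor, so that after integration against $|\eta|^{-1}\what{f^0_{\pm}}(\eta)\,d\eta=r\what{f^0_{\pm}}(r\phi)\,dr\,d\phi$ the resulting errors are absorbable in $\mathcal{R}$. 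The leading expressions become $\frac{1}{4}(2\pi)^{-3/2}\tau_m(t)\rho F_\theta(\rho)\int e^{itr(\theta\cdot\phi\mp 1)} r \what{f^0_{\pm}}(r\phi)\,dr\,d\phi$. Substituting $\what{f^0}(r\phi)\approx -\frac{1}{2(2\pi)^{3/2}}H_\phi(r)$ from (\ref{LemL1.3})---where the initial-data piece produces a $\langle t\rangle^{-7/4}$ contribution trivially in $\mathcal{R}$---and using the identity $\what{\bar f^0}(r\phi)=\overline{\what{f^0}(-r\phi)}\approx -\frac{1}{2(2\pi)^{3/2}}\overline{H_{-\phi}(r)}$ together with the change of variable $\phi\to-\phi$ on $S^2$ brings both expressions to a common form. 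The subtraction $I^m_{++}[f^1,f^0]-I^m_{+-}[f^1,\bar f^0]$ then extracts the imaginary part of $\int_0^\infty\!\!\int_{S^2}e^{itr(1-\theta\cdot\phi)}H_\phi(r)\,r\,dr\,d\phi$, producing exactly the asserted $-i\rho F_\theta(\rho)B_\theta(t)\tau_m(t)$.

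The main technical obstacle will be controlling the errors from the Taylor expansion and the bulk substitution uniformly in the $X$-norm up to $N_1$ rotation derivatives. After commuting $\Omega^\ell$ via (\ref{commomg}), one must distribute derivatives among $\what{f^1}$, its gradient, $\what{f^0}$, and the phase expansion, and then match them against the decay profiles from Lemmas \ref{LemBB}--\ref{Lemdtf} to achieve either the $\e^2\langle t\rangle^{-1-\gamma}$ bound in $X$ or the $\partial_t R'$ representation with $\|R'\|_X\lesssim\e^2\langle t\rangle^{-\gamma}$. A subtle point is that the $|\xi|^{-1/5}$ weight in (\ref{LemL1.3}) interacts with the $|\eta|^{-1}$ singularity of the bilinear kernel, which requires applying Fubini and using the integrability of $|\eta|^{4/5}$ over the ball of radius $\langle t\rangle^{-7/8}$ to close the remainder estimates.
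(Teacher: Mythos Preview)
Your proposal is correct and follows essentially the same approach as the paper's proof: dispatching the low-frequency-in-first-slot contributions by the smallness of the kernel $|\xi-\eta|/|\eta|$, the $-+$ piece with low frequency in the second slot by integration by parts in time using $|\Phi_{-+}|\approx|\xi|$, and the leading $++/+-$ pieces by Taylor expansion of the phase and of $\what{f^1}(\xi-\eta)$ followed by the bulk substitution from (\ref{LemL1.3}). The paper organizes the last step by isolating four explicit error terms $K_1,\ldots,K_4$ (phase/amplitude approximation, $\what{g_1}(\xi-\eta)-\what{g_1}(\xi)$, the $H$-substitution error via (\ref{LemL1.3}), and the initial-data contribution), but the underlying ideas are identical.
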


The next lemma treats the high-high frequency interactions $I^{m,1}_{\kappa_1\kappa_2}$, and shows that the leading order behavior is governed by an asymptotic PDE.

\begin{lemma}[High-High Interactions and the Asymptotic PDE]\label{LemHH1}
Suppose $m\geq 1$, define
\begin{align}
G_\theta(t,\rho) &:= F_\theta(t,\rho)\varphi_{> -10}(\rho\langle t\rangle^{7/8}), \quad\rho>0;
\\
G_\theta(t,\rho) &:= \overline{G_\theta(t,-\rho)},\quad \rho<0,
\end{align}
then we have
\begin{align}
\varphi_{\geq 0}(\xi\langle t\rangle^{7/8}) \big[ I_{++}^{m,1}(t,\xi) - I_{+-}^{m,1}(t,\xi)  - I_{-+}^{m,1}(t,\xi)  \big]
  + \tau_m(t)\mathcal{N}(t,\rho) \in \mathcal{R}
\end{align}
where
\begin{align}\label{1dint}
\mathcal{N}(t,\rho) := \frac{i}{4\sqrt{2\pi}} \frac{1}{t}\int_\R \frac{(\rho-r)^2}{\rho} G_\theta(t,\rho-r)G_\theta(t,r) \, \mathrm{d}r.
\end{align}
\end{lemma}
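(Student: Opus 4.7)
My plan is to reduce each of $I_{++}^{m,1}$, $I_{+-}^{m,1}$, $I_{-+}^{m,1}$ to a one-dimensional integral by localizing $\eta$ to a narrow cone around the appropriate stationary direction of the phase, performing stationary phase on the sphere, and finally combining the three pieces via the symmetric extension $G_\theta(t,-\rho) = \overline{G_\theta(t,\rho)}$. This follows the informal scheme of Step 4 in Subsection \ref{ssecStra}. Writing $\xi = \rho\theta$, the phase $\Phi_{\kappa_1\kappa_2}$ both vanishes and is stationary in $\eta$ along the ray $\eta = \lambda \rho\theta$, with $\lambda \in [0,1]$ for $(++)$, $\lambda \leq 0$ for $(+-)$, and $\lambda \geq 1$ for $(-+)$. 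I apply Lemma \ref{LemAIBP} with $p_0 \approx -m/2 + \nu m$ for a small $\nu$ to restrict $\eta$ to the cone $|\xi\wedge\eta| \lesssim 2^{p_0 + k + k'}$ around this half-line; the complementary region contributes an acceptable remainder thanks to \eqref{fhatimpcor2} and the $L^2$ energy bounds of Lemma \ref{lemEE}. The hypothesis \eqref{assump} is satisfied because the cutoff $\varphi_{>-10}(\cdot\langle t\rangle^{7/8})$ defining $f_\kappa^1$ enforces $k \geq -7m/8$.

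Once restricted to the narrow cone, I parametrize $\eta = r\phi$ with $\phi$ in a $2^{p_0}$-neighborhood of the appropriate pole $\pm\theta$ and apply stationary phase on $\mathbb{S}^2$ in $\phi$. The resulting sphere integral gives
\[
\int_{\mathbb{S}^2} e^{it(|\rho\theta - r\phi| - |\rho - \kappa r|)}\,\mathrm{d}\phi \approx \frac{2\pi}{it}\cdot\frac{\rho - r}{r\rho},
\]
which produces exactly the $1/t$ factor in $\mathcal{N}$. Replacing the smooth amplitudes $\widehat{f^1_{\kappa_j}}$ by their values at the stationary direction incurs further errors, which I absorb using the improved pointwise bounds of Lemma \ref{lemImp} and the refined modulation estimate of Lemma \ref{Lemspe}. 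Each $I_{\kappa_1\kappa_2}^{m,1}$ thereby reduces to a $\tau_m(t)$-localized one-dimensional integral on the half-line of $r$ determined by its stationary set.

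Combining the three reduced integrals with signs $+,-,-$ yields a single convolution over $r \in \R$: the $(++)$ piece covers $r\in[0,\rho]$, the $(-+)$ piece covers $r\geq\rho$, and the $(+-)$ piece covers $r\leq 0$, all with the common kernel $(r-\rho)^2/\rho$ arising from the weight $|\xi-\eta|/|\eta|$ multiplied by the factor $(\rho-r)/(r\rho)$ from stationary phase. The symmetric extension $G_\theta(t,-\rho) = \overline{G_\theta(t,\rho)}$ exactly absorbs the complex conjugation induced by switching $\kappa_2$ from $+$ to $-$, so the three half-line integrals assemble into the single convolution defining $\mathcal{N}(t,\rho)$ with the claimed coefficient $i/(4\sqrt{2\pi}\,t)$.

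The main obstacle I anticipate is the error bookkeeping in the stationary phase step. At the worst allowed frequency $k \sim -m/2$, the subleading $O(t^{-1})$ correction to the sphere asymptotic, combined with the loss incurred from differentiating the angular cutoff at scale $2^{-p_0}$, only barely fits into the $\langle t\rangle^{-1-\gamma}$ budget of Definition \ref{def}; the slack is provided precisely by Lemma \ref{Lemspe}, which furnishes the extra $2^{-m/16}$ decay for large-modulation inputs in this frequency window. A secondary subtlety appears at the endpoints $r \approx 0$ and $r \approx \rho$, where the sphere stationary-phase asymptotic degenerates; here the low-frequency truncation built into $G_\theta$, together with the contributions already peeled off in Lemmas \ref{LemL1} and \ref{LemHL1}, should absorb the endpoint regions into acceptable remainders.
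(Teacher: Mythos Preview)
Your outline tracks the paper's proof closely: angular restriction via Lemma~\ref{LemAIBP}, replacement of the profiles by their values along the parallel ray, explicit spherical stationary phase (the paper packages this as Lemma~\ref{LemItheta}), and assembly of the three sign combinations into a single convolution on $\R$ using the extension $G_\theta(-\rho)=\overline{G_\theta(\rho)}$. Two points in your plan need correction.

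First, you write that hypothesis~\eqref{assump} of Lemma~\ref{LemAIBP} is satisfied because the cutoff in $f_\kappa^1$ forces $k\geq -7m/8$. That only verifies the second inequality in~\eqref{assump}; the first condition $\min(j_1,j_2)\leq(1-\nu)m$ is \emph{not} automatic and must be arranged by a preliminary $Q_{j_rk_r}$ decomposition. The paper disposes of the case $\min(j_1,j_2)\geq(1-4\delta)m$ before invoking Lemma~\ref{LemAIBP}, and this is exactly where Lemma~\ref{Lemspe} is first needed: when both inputs have large $j_r$ and $k_2$ sits in the critical window $|k_2+m/2|\leq 2\gamma m$, the straightforward $L^2\times L^2$ and $L^\infty\times L^\infty$ estimates miss by a small power, and the extra $2^{-m/16}$ from Lemma~\ref{Lemspe} closes the gap. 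Your placement of Lemma~\ref{Lemspe} solely inside the stationary-phase error is not quite right.

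Second, the angular restriction $|\xi\wedge\eta|\lesssim 2^{p_0+k+\min(k_1,k_2)}$ localizes $\eta$ to a thin neighborhood of the \emph{line} through $\xi$, not a half-line: both the parallel and antiparallel directions survive. For each $(\kappa_1\kappa_2)$ one must then eliminate the non-resonant half. For $(++)$ antiparallel (and the analogous wrong halves of $(-+)$), one has $|\nabla_\eta\Phi|\gtrsim 1$ and integrates by parts in $\eta$ via Lemma~\ref{lemIBP0}; for $(+-)$ with $\eta$ parallel to $\xi$ and $|\eta|\gtrsim|\xi|$, the gradient vanishes but $|\Phi_{+-}|\gtrsim|\xi|$, so one integrates by parts in time. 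After reducing to one dimension there is one more such removal: the portion $r>\rho$ of the $(++)$ integral carries the oscillatory factor $e^{2it(\rho-r)}$ and is killed by integration by parts in $r$. These steps are routine but are not covered by the outline as written.
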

The proofs of Lemma \ref{LemHH1} is performed in Section \ref{PrLemHH1}.

The last lemma reduces the function $G_\theta$ on the right hand side of (\ref{1dint}) back to $F_{\theta}$, thus recovering the
expression on right-hand side of (\ref{asymptpde}):
\begin{lemma}\label{LemAux} For $m\geq 1$ we have
\begin{multline}\label{LemAux1}
\tau_m(t)\mathcal{N}(t,\rho) \frac{i}{4\sqrt{2\pi}} \frac{1}{t}\int_\R \frac{(\rho-r)^2}{\rho} F_\theta(t,\rho-r)F_\theta(t,r) \, \mathrm{d}r
\\
-\tau_m(t)i\rho F_{\theta}(t,\rho)\cdot \frac{1}{32\pi^2}\int_{\R}\varphi_{\leq -10}(r\langle t\rangle^{7/8})
  H_\theta(t,r)\,\mathrm{d}r \in \mathcal{R}.
\end{multline}
\end{lemma}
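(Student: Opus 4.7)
The plan is to bridge the gap between $\mathcal{N}(t,\rho)$, built from the high-frequency profile $G_\theta$, and the convolution $\frac{i}{4\sqrt{2\pi}\,t}\int \tfrac{(\rho-r)^2}{\rho} F_\theta(t,\rho-r)F_\theta(t,r)\,\mathrm{d}r$ that appears in the asymptotic PDE \eqref{asymptpde}. Setting $F_\theta=G_\theta+F_\theta^0$ with $F_\theta^0(t,r):=\varphi_{\leq -10}(r\langle t\rangle^{7/8})F_\theta(t,r)$ (extended to $r<0$ by the conjugation rule), the difference of convolutions decomposes as $I+II+III$, where
\[
I=\int \frac{(\rho-r)^2}{\rho}G_\theta(\rho-r)F_\theta^0(r)\,\mathrm{d}r,\qquad II = \int \frac{(\rho-r)^2}{\rho}F_\theta^0(\rho-r)G_\theta(r)\,\mathrm{d}r,
\]
and $III=\int \tfrac{(\rho-r)^2}{\rho}F_\theta^0(\rho-r)F_\theta^0(r)\,\mathrm{d}r$. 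The double-low piece $III$ is supported in $|\rho|\lesssim \langle t\rangle^{-7/8}$ since both inputs are, so the $X$-norm weight $2^{k+15k^+}\lesssim \langle t\rangle^{-7/8}$ combined with the pointwise bound \eqref{aprioriLinfty3} places $\tau_m(t)\cdot\tfrac{III}{t}$ directly in $\mathcal{R}$.

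Next I symmetrize: the change of variables $r\mapsto\rho-r$ in $II$ yields
\[
I+II = \int \frac{(\rho-r)^2+r^2}{\rho}\,G_\theta(\rho-r)F_\theta^0(r)\,\mathrm{d}r,
\]
and since $|r|\lesssim \langle t\rangle^{-7/8}$ on $\supp F_\theta^0$, a Taylor expansion of $(\rho-r)^2+r^2 = \rho^2-2\rho r+2r^2$ and of $G_\theta(\rho-r)=G_\theta(\rho)+O(r\partial_\rho G_\theta)$ singles out the leading contribution $\rho\, G_\theta(\rho)\int F_\theta^0(r)\,\mathrm{d}r$. In the output regime $|\rho|\gtrsim \langle t\rangle^{-7/8}$, where $G_\theta(\rho)=F_\theta(\rho)$, the identity \eqref{LemL1.3} of Lemma \ref{LemL1} yields
\[
\int F_\theta^0(t,r)\,\mathrm{d}r = -\frac{1}{2(2\pi)^{3/2}}\int_\R \varphi_{\leq -10}(r\langle t\rangle^{7/8})H_\theta(t,r)\,\mathrm{d}r + O\bigl(\varepsilon^2 \langle t\rangle^{-7/8}\bigr),
\]
where the error absorbs both the time-zero integral $\int \varphi_{\leq -10}\widehat{f}(0,r\theta)\,\mathrm{d}r$, small by the shrinking support, and the $|r|^{1/5}$ remainder from \eqref{LemL1.3} integrated in $r$. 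Collecting constants, $\tfrac{i}{4\sqrt{2\pi}\,t}\cdot\bigl(-\tfrac{1}{2(2\pi)^{3/2}}\bigr) = -\tfrac{i}{32\pi^2\,t}$, which matches the correction term in \eqref{LemAux1}.

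The main obstacle will be controlling the Taylor-expansion errors and the action of the rotation vector fields $\Omega^{\ell}$, $|\ell|\leq N_1$, demanded by the norm \eqref{Xnorm}. Each error has the schematic form
\[
\frac{1}{t}\,\rho^{a-b}\,\partial_\rho^{b}G_\theta(\rho)\int r^{c}F_\theta^0(r)\,\mathrm{d}r,\qquad c\geq 1,
\]
and the factor $|r|^c$, combined with the support size $\lesssim \langle t\rangle^{-7/8}$ of $F_\theta^0$ and the $L^\infty$ loss $\varepsilon\langle t\rangle^{1+C\varepsilon}$ from \eqref{aprioriLinfty3}, produces the required gain $\langle t\rangle^{-1-\gamma}$ in $X$; the bound on $\partial_\rho G_\theta$ comes from \eqref{fhatimpcor2} together with the scaling-vector-field information \eqref{aprioriE} after passing to polar coordinates. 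Because $\Omega$ acts only on angular variables and commutes with the radial cutoffs defining $G_\theta$ and $F_\theta^0$, the Leibniz rule distributes $\Omega^\ell$ across $I$, $II$ and $III$ exactly as in \eqref{commomg}, reducing each angular derivative to the energy bound \eqref{aprioriE}. Together with the low-output analysis of $III$ these estimates place every error in $\mathcal{R}$, completing the plan.
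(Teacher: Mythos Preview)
Your decomposition $F_\theta\cdot F_\theta - G_\theta\cdot G_\theta = I+II+III$, the treatment of $III$ by support, and the extraction of the $H_\theta$ correction from the leading piece $\rho\, G_\theta(\rho)\int F_\theta^0(r)\,\mathrm{d}r$ via \eqref{LemL1.3} all match the paper's proof in substance.  The paper simply organizes the cross terms as $A=II$ (bounded directly, since $(\rho-r)^2$ is small on its support), and $B+C=I+III$ with $B=\rho F_\theta(\rho)\int L_\theta(r)\,\mathrm{d}r$ singled out as the main term---so your symmetrization of $I+II$ is harmless but unnecessary.

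The one place where your sketch is too optimistic is the Taylor remainder $G_\theta(\rho-r)-G_\theta(\rho)$.  You appeal to a pointwise bound on $\partial_\rho G_\theta$ coming from \eqref{fhatimpcor2} and \eqref{aprioriE}, but no uniform $L^\infty$ control on $\partial_\rho\widehat{f}$ is available: for the $Q_{jk}$-piece one only has $|\partial_\rho\widehat{Q_{jk}f}|\lesssim 2^{j/2-2k}$, which diverges as $j\to\infty$.  The paper instead bounds the \emph{difference} directly: it decomposes $f=\sum_j Q_{jk}f$ and, for each $j$, takes the better of the mean-value estimate $|r|\cdot 2^{j/2-2k}\lesssim 2^{-7m/8+j/2-2k}$ and the crude estimate $2^{-j/2-2k}$ on $\widehat{Q_{jk}f}$, obtaining
\[
|F_\theta(\rho-r)-F_\theta(\rho)|\lesssim \sup_j\min\bigl(2^{-7m/8+j/2-2k},\,2^{-j/2-2k}\bigr)\lesssim 2^{-7m/16-2k}.
\]
This $j$-balancing is what makes the error summable; your reference to \eqref{aprioriE} is the right input, but the argument needs this interpolation rather than a naive derivative bound.  (A smaller point: integrating the $|r|^{1/5}$-weighted remainder from \eqref{LemL1.3} over $|r|\lesssim\langle t\rangle^{-7/8}$ gives $O(\varepsilon^2\langle t\rangle^{-7/40})$, not $O(\varepsilon^2\langle t\rangle^{-7/8})$---still sufficient, but worth noting.)
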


\medskip
\subsection{Proof of Lemma \ref{Lem--}}\label{PrLem--}
Recall that
\begin{align}
\label{pr--0}
\begin{split}
I_{--}^m[g_1,g_2](t,\xi) &:= \frac{1}{4}(2\pi)^{-3/2} \tau_m(t)\int_{\R^3} e^{it\Phi_{--}(\xi,\eta)}\frac{|\xi-\eta|}{|\eta|}
  \widehat{\bar{g_1}}(t,\xi-\eta)\widehat{\bar{g_2}}(t,\eta)\,\mathrm{d}\eta\mathrm{d}s,
\\
\Phi_{--}(\xi,\eta) &:= |\xi|+|\xi-\eta|+|\eta|.
\end{split}
\end{align}
Since the phase $\Phi_{--}$ is elliptic, $\Phi_{--}\approx \max(|\xi|,|\xi-\eta|,|\eta|)$, it is convenient to write
\begin{align}
\label{pr--1}
\begin{split}
& I_{--}^{m}[g_1,g_2] = \partial_t \big( \tau_m K_{--} [g_1,g_2] \big) - \tau_m^\prime  K_{--}[g_1,g_2] - \tau_m K_{--}[\partial_t g_1,g_2]
  - \tau_m K_{--}[g_1,\partial_t g_2]
\\
& K_{--}[g_1,g_2](t,\xi) := \int_{\R^3} e^{it\Phi_{--}(\xi,\eta)} \frac{|\xi-\eta|}{|\eta|\Phi_{--}(\xi,\eta)}
  \widehat{\overline{g_1}}(t,\xi-\eta)\widehat{\overline{g_2}}(t,\eta)\,\mathrm{d}\eta\mathrm{d}s.
\end{split}
\end{align}
Now, by using \eqref{commomg}, we only need to estimate $I_{--}^m[g_1,g_2]$ with
\begin{equation}
\label{defgr}
g_1=\Omega^{\ell_1}f,\quad g_2=\Omega^{\ell_2}f,\quad |\ell_1|+|\ell_2|\leq N_1.
\end{equation}
Note that they satisfy the estimates
\begin{align}\label{estgr1'}
& \sup_{|\ell|\leq N/2}{\| \Omega^{\ell}P_kg_r \|}_{L^2} \lesssim \e2^{-20k^+}2^{C\e m},
\\
\label{estgr2'}
& {\| \widehat{\Omega Q_{j,k} g_r }\|}_{L^\infty} + {\| \widehat{Q_{j,k} g_r }\|}_{L^\infty}
  \lesssim \e2^{-20k^+} \min\big(2^{(1+C\e)m}, 2^{-j/2-2k}2^{C\e m}, 2^{-k} 2^{\delta'm}\big),
\end{align}
and
\begin{align}
\label{estgr3'}
\begin{split}
& {\| \widehat{Q_{j,k} \partial_tg_r }\|}_{L^2} \lesssim \e 2^{-20k^+}2^{-(1-C\e )m},
\\
& {\| \widehat{Q_{j,k} \partial_tg_r }\|}_{L^\infty} \lesssim \e 2^{-k}2^{-20k^+} 2^{-m+\delta'm+4\delta m}.
\end{split}
\end{align}
We begin by decomposing the two inputs in frequencies $k_1,k_2$ and distinguish a few cases.

\medskip
{\it Case 1: $\min(k_1,k_2) \leq -3m/4$}.
Let us start by looking at the term $K_{--}[g_1,g_2]$, and estimate, for all $|\xi|\approx 2^k$ and $t \approx 2^m$
\begin{align*}
2^k \big| K_{--}[P_{k_1}g_1, P_{k_2}g_2](\xi) \big| & \lesssim
  2^k \cdot 2^{-k_2} {\|\what{P_{k_1}g_1}\|}_{L^\infty} {\|\what{P_{k_2}g_2}\|}_{L^\infty} \cdot 2^{3\min(k_1,k_2)}
\end{align*}
We can then sum over frequencies using \eqref{fhatimpcor2}:
\begin{align*}
& \sum_{\min(k_1,k_2) \leq -3m/4} 2^k 2^{15k^+} \big| K_{--}[P_{k_1}g_1, P_{k_2}g_2](\xi) \big|
\\
& \lesssim \sum_{\min(k_1,k_2) \leq -3m/4} 2^{15k^+} 2^{\max(k_1,k_2)+2\min(k_1,k_2)} {\|\what{P_{k_1}g_1}\|}_{L^\infty}
  {\|\what{P_{k_2}g_2}\|}_{L^\infty}
\\
& \lesssim 2^{\delta'm} \e \cdot 2^{\delta'm} \e \cdot 2^{-3m/4} \lesssim \e^2 2^{-\gamma m}.
\end{align*}
According to \eqref{pr--1} and the definition of acceptable remainder \ref{def}, this suffices to deal with this term.
For the same reason the term $\tau_m^\prime K_{--}[g_1,g_2]$ in \eqref{pr--1} is also an acceptable remainder
when the frequencies of the inputs satisfy $\min(k_1,k_2) \leq -3m/4$, see \eqref{timedecomp}.

To deal with the other two terms we estimate similarly, using \eqref{fhatimpcor2} and \eqref{dtf1}:
\begin{align*}
& \sum_{\min(k_1,k_2) \leq -3m/4} 2^k 2^{15k^+} \big| K_{--}[P_{k_1}g_1, P_{k_2}\partial_tg_2](\xi) \big|
\\
& \lesssim \sum_{\min(k_1,k_2) \leq -3m/4} 2^{15k^+}
  2^{\max(k_1,k_2)+2\min(k_1,k_2)} {\|\what{P_{k_1}g_1}\|}_{L^\infty} {\|\what{P_{k_2}\partial_t g_2}\|}_{L^\infty}
\\
& \lesssim 2^{-3m/4} \cdot \e 2^{\delta'm} \cdot \e 2^{-m+\delta'm+4\delta m} \lesssim \e^2 2^{-(1+\gamma) m}.
\end{align*}
The term $K_{--}[\partial_t g_1,g_2]$ can be handled in the same way.

Next, we decompose $g_r=\sum_{(j_r,k_r)}Q_{j_rk_r}g_r$, and treat first the case when $\min(j_1,j_2)$ is large.

\medskip
{\it Case 2: $\min(j_1,j_2) \geq 9m/10$ or $\max(j_1,j_2) \geq 3m$}. When $\max(j_1,j_2) \geq 3m$ estimating directly using (\ref{pr--0}) suffices. When $\min(j_1,j_2) \geq 9m/10$, the starting point is again the identity \eqref{pr--1}.
Then, using the bound \eqref{aprioriLinfty}, for all $|\xi|\approx2^k$ and
$t\approx 2^m$, we estimate
\begin{align*}
2^k 2^{15k^+} \big| K_{--}[Q_{j_1k_1}g_1, Q_{j_2k_2}g_2](\xi) \big| \lesssim
  2^k 2^{15k^+} \cdot 2^{-k_2} {\|\what{Q_{j_1k_1}g_1}\|}_{L^\infty} {\|\what{Q_{j_2k_2}g_2}\|}_{L^\infty} \cdot 2^{3\min(k_1,k_2)}
\\
\lesssim 2^{-5\max(k_1,k_2)^+} 2^{\max(k_1,k_2)+2\min(k_1,k_2)}
  \cdot \e 2^{-j_1/2 -2k_1}\cdot \e 2^{-j_2/2 -2k_2}2^{C\e m}
\\
\lesssim 2^{-5\max(k_1,k_2)^+} 2^{-\max(k_1,k_2)} 2^{C\e m} \cdot \e^2 2^{-(j_1+j_2)/2}
\end{align*}
For $j_1,j_2 \geq 9m/10$, and $-3m/4 \leq \min(k_1,k_2) \leq \max(k_1,k_2)$,
we can sum the above bound over all parameters $k_1,k_2,j_1,j_2$ obtaining the desired bound of $\e^2 2^{-\gamma m}$.
By the same argument, the term $\tau_m^\prime K_{--}[g_1,g_2]$ is also an acceptable remainder.

For the other two terms in \eqref{pr--1} we proceed similarly as above but also using \eqref{dtf2}:
\begin{align*}
2^k 2^{15k^+} \big| K_{--}[Q_{j_1k_1}g_1, \partial_t Q_{j_2k_2}g_2](\xi) \big| \lesssim
  2^{15k^+}  2^{\max(k_1,k_2)+2\min(k_1,k_2)} {\|\what{Q_{j_1k_1}g_1}\|}_{L^\infty} {\|\what{\partial_t Q_{j_2k_2}g_2}\|}_{L^\infty}
\\
\lesssim 2^{-5\max(k_1,k_2)^+} 2^{\max(k_1,k_2)+2\min(k_1,k_2)} \cdot \e 2^{-j_1/2 -2k_1}2^{C\e m}  \cdot\e 2^{-k_2}2^{-2m/3}
\\
\lesssim 2^{-5\max(k_1,k_2)^+} 2^{C\e m} \cdot \e^2 2^{-2m/3} 2^{-j_1/2}.
\end{align*}
Using that $j_1 \geq 9m/10$ and summing over all parameters gives a more than sufficient bound of $\e^2 2^{-11m/10}$.
The term $K_{--}[ \partial_t f,f]$ can be treated identically.

\medskip
{\it Case 3: $\min(k_1,k_2) \geq -3m/4$, and $\min(j_1,j_2) \leq 9m/10$}.
In this last case we can use Lemma \ref{LemAIBP},
thanks to the restrictions on the frequencies and spatial localization guaranteeing that the hypotheses \eqref{LemAIBP2}-\eqref{LemAIBP3}
are satisfied. The conclusion \eqref{LemAIBPconc} then takes care of the integral in \eqref{pr--0} with inputs $f_1,f_2$,
when the support is restricted to $|\xi\wedge\eta| \gtrsim 2^{p_0+k+\min(k_1,k_2)}$ with
\begin{align*}
p_0 := -\frac{m + \min(k,k_1,k_2)-4\delta m}{2}.
\end{align*}
Therefore, matters are reduced to estimating the term
\begin{multline}
\label{pr--10}
I_{--,p_0}^m[Q_{j_1k_1}g_1,Q_{j_2k_2}g_2](t,\xi) := \tau_m(t)\int_{\R^3} e^{it\Phi_{--}(\xi,\eta)}\frac{|\xi-\eta|}{|\eta|}
 \\ \times\widehat{\overline{Q_{j_1k_1}g_1}}(t,\xi-\eta)\widehat{\overline{Q_{j_2k_2}g_2}}(t,\eta)\, \chi(2^{-(p_0+k+\min(k_1,k_2))}\xi\wedge\eta)\, \mathrm{d}\eta,
\end{multline}
for a compactly supported function $\chi$.
Similarly to \eqref{pr--1} we can write
\begin{align}
\label{pr--11}
\begin{split}
I_{--,p_0}^{m}[Q_{j_1k_1}g_1,Q_{j_2k_2}g_2] = \partial_t \big( \tau_m K_{--,p_0} [Q_{j_1k_1}g_1,Q_{j_2k_2}g_2] \big)
  - \tau_m^\prime  K_{--,p_0}[Q_{j_1k_1}g_1,Q_{j_2k_2}g_2]
  \\ - \tau_m K_{--,p_0}[\partial_t Q_{j_1k_1}g_1,Q_{j_2k_2}g_2] - \tau_m K_{--,p_0}[Q_{j_1k_1}g_1,\partial_t Q_{j_2k_2}g_2]
\end{split}
\end{align}
where
\begin{align}
\label{pr--12}
\begin{split}
K_{--,p_0}[g_1,g_2](t,\xi) := \int_{\R^3} e^{it\Phi_{--}(\xi,\eta)} \frac{|\xi-\eta|}{|\eta|\Phi_{--}(\xi,\eta)}
  \widehat{\overline{g_1}}(t,\xi-\eta)\widehat{\overline{g_2}}(t,\eta)\,
  \\ \times \chi(2^{-(p_0+k+\min(k_1,k_2))}\xi\wedge\eta)\, \mathrm{d}\eta\mathrm{d}s.
\end{split}
\end{align}
Notice that, for each fixed $\xi$, the support of the above integral is contained in a cone of aperture $2^{p_0}$ and length $2^{\min(k_1,k_2)}$, whose measure is $2^{2p_0 + 3\min(k_1,k_2)}$.
Then, for $t\approx2^m$ we can estimate
\begin{align*}
2^k 2^{15k^+} \big| K_{--,p_0}[Q_{j_1k_1}g_1, Q_{j_2k_2}g_2](\xi) \big| \lesssim
  2^k 2^{15k^+} \cdot 2^{-k_2} 2^{k_1-\max(k_1,k_2)}\\\times{\|\what{Q_{j_1k_1}g_1}\|}_{L^\infty} {\|\what{Q_{j_2k_2}g_2}\|}_{L^\infty}  2^{2p_0 + 3\min(k_1,k_2)}
\\
\lesssim 2^{15k^+} \cdot 2^{-m(1-4\delta)} \cdot 2^{\max(k_1,k_2) + \min(k_1,k_2)}{\|\what{Q_{j_1k_1}g_1}\|}_{L^\infty} {\|\what{Q_{j_2k_2}g_2}\|}_{L^\infty}
\\
\lesssim 2^{-5\max(k_1,k_2)^+} \cdot 2^{-m(1-4\delta)} \cdot \big( \e 2^{\delta'm}\big)^2
\end{align*}
where we have used \eqref{fhatimpcor2}, and this bound clearly suffices. This also takes care of the term $2^{-m} K_{--,p_0}[Q_{j_1k_1}g_1,Q_{j_2k_2}g_2]$.

In a similar fashion, we can deal with the other terms in \eqref{pr--11}, by using in addition \eqref{dtf2} as follows:
\begin{align*}
2^k 2^{15k^+} \big| K_{--,p_0}[Q_{j_1k_1}g_1, \partial_t Q_{j_2k_2}g_2](\xi) \big| \lesssim
2^k 2^{15k^+} \cdot 2^{-k_2} 2^{k_1-\max(k_1,k_2)}
\\
\times{\|\what{Q_{j_1k_1}g_1}\|}_{L^\infty} {\|\what{\partial_tQ_{j_2k_2}g_2}\|}_{L^\infty}  2^{2p_0 + 3\min(k_1,k_2)}
\\
\lesssim 2^{15k^+} \cdot 2^{-m(1-4\delta)}
  \cdot 2^{\max(k_1,k_2) + \min(k_1,k_2)}{\|\what{Q_{j_1k_1}g_1}\|}_{L^\infty} {\|\what{\partial_t Q_{j_2k_2}g_2}\|}_{L^\infty}
\\
\lesssim 2^{-5\max(k_1,k_2)^+} \cdot 2^{-m(1-4\delta)} \cdot \e 2^{\delta'm} \cdot \e 2^{-2m/3}
\end{align*}
Summing over all parameters we see that we get at least a bound of $\e^2 2^{-5m/4}$ which suffices.
The last term $K_{--,p_0}[ \partial_tQ_{j_1k_1}g_1 ,Q_{j_2k_2}g_2]$ can be treated identically. This concludes the proof of this Lemma. \qed

\medskip
\subsection{Proof of Lemma \ref{LemL1}}\label{PrLemL1}

\begin{proof}[Proof of \eqref{LemL1.1}]
We begin by showing the first claim in \eqref{LemL1.1}, and in particular the following:
\begin{align}
\label{pr1}
\begin{split}
& |\xi|^{4/5}\varphi_{\leq 0}(\xi\langle t\rangle^{7/8}) \Omega^{\ell}I_{++}^m(t,\xi) = R(t,\xi) + \partial_t S(t,\xi),\quad |\ell|\leq N_1,
\\
& |R(t,\xi)|\lesssim \e^2 2^{-(1+\gamma)m}, \qquad |S(t,\xi)|\lesssim \e^2 2^{-\gamma m}, \qquad t \approx 2^m, \quad m\geq 0.
\end{split}
\end{align}

By using (\ref{commomg}), we again reduce to estimating $I_{++}^m[g_1,g_2]$, where $g_{1,2}$ are defined in \eqref{defgr}.
We let $|\xi| \approx 2^k$, $k\in\Z$, and begin by splitting relative to the size of $|\eta|$:
\begin{align*}
\begin{split}
4(2\pi)^{3/2} I_{++}^m(t,\xi) & = I_{++}^{m,1}(t,\xi) +  I_{++}^{m,2}(t,\xi),
\\
I_{++}^{m,1}(t,\xi) &:= \tau_m(t)\int_{\R^3} e^{it\Phi_{++}(\xi,\eta)}\frac{|\xi-\eta|}{|\eta|}
\widehat{g_1}(t,\xi-\eta)\widehat{g_2}(t,\eta) \varphi_{\leq k + 10}(\eta) \,\mathrm{d}\eta\mathrm{d}s,
\\
I_{++}^{m,2}(t,\xi) &:= \tau_m(t)\int_{\R^3} e^{it\Phi_{++}(\xi,\eta)}\frac{|\xi-\eta|}{|\eta|}
\widehat{g_1}(t,\xi-\eta)\widehat{g_2}(t,\eta) \varphi_{> k + 10}(\eta) \, \mathrm{d}\eta\mathrm{d}s.
\end{split}
\end{align*}
Using \eqref{fhatimp}
\begin{align*}
\big| I_{++}^{m,1}(t,\xi) \big| \lesssim {\| |\xi|\widehat{g_r}(t)\|}_{L^\infty}^2 \cdot {\| |\xi|^{-2} \varphi_{\leq k + 10}(\cdot)\|}_{L^1}
  \lesssim \e^2 2^{\delta' m} \cdot 2^k.
\end{align*}
After multiplication by $\varphi_{\leq 0}(\xi\langle t\rangle^{7/8})|\xi|^{4/5}$ this term can be absorbed into $R$ in \eqref{pr1}.

The term $I_{++}^{m,2}$ is treated using integration by parts. More precisely we write
\begin{align}
\label{pr2}
\begin{split}
& I_{++}^{m,2} = \partial_t \big( \tau_m K_{++} [g_1,g_2] \big) - \tau_m^\prime K_{++}[g_1,g_2] - \tau_m K_{++}[\partial_t g_1,g_2]
  - \tau_m K_{++}[g_1,\partial_t g_2]
\\
& K_{++}[g_1,g_2](t,\xi) :=\sum_k \varphi_k(\xi)\int_{\R^3} e^{it\Phi_{++}(\xi,\eta)}\frac{|\xi-\eta|}{|\eta|\Phi_{++}(\xi,\eta)}
\widehat{g_1}(t,\xi-\eta)\widehat{g_2}(t,\eta) \varphi_{> k + 10}(\eta) \, \mathrm{d}\eta\mathrm{d}s.
\end{split}
\end{align}
Notice that for $|\xi|\approx 2^k$ with $k \leq -7m/8$, on the support of $K_{++}$ 
we have $|\eta| \geq 2^{10} |\xi|$, and therefore $|\Phi_{++}(\xi,\eta)| \gtrsim |\eta| \approx |\xi-\eta|$.
It is then not hard to see that for all $k_2 \geq k + 10$ we have for $|k_1-k_2|\leq 5$ that
\begin{align}
\label{pr3}
\big| \varphi_k(\xi) K_{++}[P_{k_1}g_1,P_{k_2}g_2](\xi) \big| \lesssim
  2^{k_2/2}\min\big( {\|P_{k_1} g_1\|}_{L^2} {\|\widehat{P_{k_2}g_2}\|}_{L^\infty},
  {\|\widehat{P_{k_1}g_1}\|}_{L^\infty} {\|P_{k_2}g_2\|}_{L^2} \big).
\end{align}
Using \eqref{pr3}, followed by \eqref{aprioriLinfty4}, we immediately obtain, for all $t\approx 2^m$
\begin{align*}
\big| |\xi|^{4/5} \varphi_{\leq 0}(\xi\langle t\rangle^{7/8}) K_{++}[g_1,g_2](t,\xi) \big|
  \lesssim \sum_{k\leq \min\{-7m/8, k_2-10\}} 2^{4k/5} 2^{k_2/2} {\|g_1(t)\|}_{L^2} {\|\widehat{P_{k_2}g_2}(t)\|}_{L^\infty}
\\
\lesssim \e 2^{C\e m} \cdot 2^{-m/4} \sum_{k_2} 2^{k_2} {\|\widehat{P_{k_2}f}(t)\|}_{L^\infty}
\lesssim 2^{-m/4} \cdot \e^2 2^{\delta'm} \lesssim \e^2 2^{-\gamma m},
\end{align*}
having used \eqref{fhatimpcor2}. This term can then be absorbed in $S$ in \eqref{pr1}.
The same bound shows that also $\tau_m^\prime K_{++}^m[g_1,g_2]$ can be absorbed into $R$ in \eqref{pr1}.

Similarly, we can use again \eqref{fhatimpcor2}, and in addition \eqref{dtf1}, to obtain
\begin{align*}
\big| |\xi|^{4/5}\varphi_{\leq 0}(\xi\langle t\rangle^{7/8})K_{++}[\partial_t g_1,g_2](t,\xi) \big|
  \lesssim \sum_{k\leq \min\{-7m/8, k_2-10\}} 2^{4k/5} 2^{k_2/2} {\|\partial_t g_1\|}_{L^2} {\|\widehat{P_{k_2}g_2}\|}_{L^\infty}
\\
\lesssim \e^2 2^{-(1-C\e m)} \cdot 2^{-m/4} \sum_{k_2} 2^{k_2} {\|\widehat{P_{k_2}g_2}(t)\|}_{L^\infty}
\lesssim \e^3 2^{-(1+\gamma)m}.
\end{align*}
A similar bound holds for $K_{++}^m[g_1,\partial_tg_2]$, so that these terms can be also absorbed into $R$ in \eqref{pr1}.

The same argument above also show the claim \eqref{LemL1.1} for the term $I_{--}^m(t,\xi)$,
since $\Phi_{--}(\xi,\eta) = |\xi|+|\eta|+|\xi-\eta|$ so that we can directly resort to integration by parts in the time variable as in \eqref{pr2}.
\end{proof}

\begin{proof}[Proof of \eqref{LemL1.2}]
We need to show that for all $t \approx 2^m$, $m\geq 0$, and $|\ell|\leq N_1$, we have
\begin{align}
\label{pr10}
\begin{split}
|\xi|^{4/5} \varphi_{\leq 0}(\xi\langle t\rangle^{7/8})\Big|\Omega^{\ell} \Big( I_{+-}(t,\xi) - I_{-+}(t,\xi)
  - \frac{1}{2}(2\pi)^{-3/2} \int_{\R^3} e^{it(|\xi| - \xi\cdot\eta|\eta|^{-1})} |\widehat{f}(t,\eta)|^2 \,\mathrm{d}\eta \Big)\Big|
\\
\lesssim \e^2 2^{-(1+\gamma)m}.
\end{split}
\end{align}

From the definition of $I_{\kappa_1\kappa_2}$ we can write
\begin{align}
\begin{split}
I_{+-}(t,\xi) & = I_{+-,\leq q_0}(t,\xi) + I_{+-,> q_0}(t,\xi), \qquad q_0:= -\frac{m}{2}
\\
I_{+-,\ast}(t,\xi) & := \frac{1}{4}(2\pi)^{-3/2} \int_{\R^3} e^{it(|\xi|-|\xi-\eta|+|\eta|)}\frac{|\xi-\eta|}{|\eta|}
\widehat{f}(t,\xi-\eta)\widehat{\overline{f}}(t,\eta) \, \varphi_{\ast}(\eta) \,\mathrm{d}\eta.
\end{split}
\end{align}
We can immediately verify that $I_{+-,\leq q_0}(t,\xi)$ is an acceptable remainder term by using \eqref{fhatimp}:
\begin{align}
\label{esti0}
\begin{split}
\big| |\xi|^{4/5}\varphi_{\leq 0}(\xi\langle t\rangle^{7/8}) \Omega^{\ell}I_{+-,\leq -q_0}(t,\xi) \big| \lesssim
  2^{-7m/10} \int_{\R^3} \big| |\xi-\eta| \widehat{g_1}(t,\xi-\eta)\big|
  \big| |\eta| \widehat{\overline{g_2}}(t,\eta) \big| \, \frac{\varphi_{\leq q_0}(\eta)}{|\eta|^2} \,\mathrm{d}\eta
  \\
\lesssim 2^{-7m/10} \cdot\big( \e 2^{\delta'm} \big)^2 \cdot 2^{q_0} \lesssim \e^2 2^{-21m/20},
\end{split}
\end{align}
where $g_{1,2}$ are defined in \eqref{defgr}.

Next, for $(\kappa_1\kappa_2) \in \{ (+-),(-+)\}$, let
\begin{align}
I_{\kappa_1\kappa_2,0}(t,\xi) &:= \frac{1}{4}(2\pi)^{-3/2} \int_{\R^3} e^{it(|\xi| + \kappa_1\xi\cdot\eta|\eta|^{-1})}
  \widehat{f_{\kappa_1}}(t,-\eta)\widehat{f_{\kappa_2}}(t,\eta)\,\mathrm{d}\eta.
\end{align}

Let us look at the case $(\kappa_1\kappa_2)=(+-)$ and estimate
\begin{align}
\label{pr11}
\begin{split}
& \big| I_{+-,>q_0}(t,\xi) - I_{+-,0}(t,\xi) \big| \lesssim |I_0|+|I_1| + |I_2| + |I_3|,
\\
I_0&:= \int_{\R^3} e^{it(|\xi| + \kappa_1\xi\cdot\eta|\eta|^{-1})}
  \widehat{f_{\kappa_1}}(t,-\eta)\widehat{f_{\kappa_2}}(t,\eta) \varphi_{\leq q_0}(\eta)\,\mathrm{d}\eta
\\
I_1 & :=  \int_{\R^3} \big[ e^{it(|\xi|-|\xi-\eta|+|\eta|)} - e^{it(|\xi| + \xi\cdot\eta|\eta|^{-1})}\big] \frac{|\xi-\eta|}{|\eta|}
\widehat{f}(t,\xi-\eta)\widehat{\overline{f}}(t,\eta) \varphi_{>q_0}(\eta)\,\mathrm{d}\eta
\\
I_2 & := \int_{\R^3} e^{it(|\xi| + \xi\cdot\eta|\eta|^{-1})} \Big[ \frac{|\xi-\eta|}{|\eta|} - 1\Big]
\widehat{f}(t,\xi-\eta)\widehat{\overline{f}}(t,\eta) \varphi_{>q_0}(\eta)\,\mathrm{d}\eta
\\
I_3 & := \int_{\R^3} e^{it(|\xi| + \xi\cdot\eta|\eta|^{-1})}
\big[ \widehat{f}(t,\xi-\eta) - \widehat{f}(t,-\eta) \big] \widehat{\overline{f}}(t,\eta) \varphi_{>q_0}(\eta)\,\mathrm{d}\eta.
\end{split}
\end{align}

Estimating in the same way as \eqref{esti0} we can prove $I_0\in\mathcal{R}$. Since for all $|\xi|\lesssim 2^{-7m/8} \ll |\eta|$ one has
\begin{align*}
\big| |\xi|-|\xi-\eta|+|\eta| - (|\xi| + \xi\cdot\eta|\eta|^{-1})\big| \lesssim |\xi|^2 |\eta|^{-1},
\end{align*}
we can estimate, for $t \approx 2^m$ and $|\ell|\leq N_1$,
\begin{align*}
|\Omega^{\ell}I_1| & \lesssim \int_{\R^3} \Big| e^{it(|\xi|-|\xi-\eta|+|\eta|)} - e^{it(|\xi| + \xi\cdot\eta|\eta|^{-1})}\Big|
  \big||\xi-\eta|\widehat{g_1}(t,\xi-\eta)\big| \big|\widehat{\overline{g_2}}(t,\eta)\big|
  \frac{\varphi_{>q_0}(\eta)}{|\eta|}\,\mathrm{d}\eta
\\
& \lesssim \int_{\R^3} t |\xi|^2 |\eta|^{-1}
  \big||\xi-\eta|\widehat{g_1}(t,\xi-\eta)\big| \big||\eta|\widehat{\overline{g_2}}(t,\eta)\big|
  \frac{\varphi_{>q_0}(\eta)}{|\eta|^2}\,\mathrm{d}\eta
\\
& \lesssim t |\xi|^2 \cdot {\| |\xi|\widehat{g_{1,2}}(t) \|}_{L^\infty}^2 \cdot m
\lesssim |\xi|^2 \cdot 2^{m} \cdot \big( \e 2^{\delta'm} \big)^2 m.
\end{align*}
Here $g_{1,2}$ are as in (\ref{defgr}), and we have used \eqref{fhatimpcor2} for the last inequality.
Upon multiplying this by $|\xi|^{4/5}$, with $|\xi| \lesssim 2^{-7m/8}$, we obtain the bound
\begin{align*}
|\xi|^{14/5} \cdot 2^{m} \cdot \e^2 2^{2\delta' m} \lesssim \e^2 2^{-21m/20}.
\end{align*}

The second term in \eqref{pr11} can be estimated as follows:
\begin{align*}
|\Omega^{\ell}I_2| \lesssim \int_{\R^3} \Big| \frac{|\xi-\eta|}{|\eta|} - 1\Big|
  \big| \widehat{g_1}(t,\xi-\eta) \big| \big|\widehat{\overline{g_2}}(t,\eta)\big| \varphi_{>q_0}(\eta)\,\mathrm{d}\eta
  \lesssim |\xi| 2^{-q_0} \cdot {\| \what{g_{1,2}} \|}_{L^2}^2
\\
\lesssim  2^{-7m/8} \cdot 2^{m/2} \cdot \e^2 2^{C\e m}
\lesssim 2^{-3m/8+C\e m} \e^2.
\end{align*}
Multiplying this bound by $|\xi|^{4/5}$, with $|\xi| \lesssim 2^{-7m/8}$, we see that this is also an acceptable remainder.

To estimate $I_3$ we decompose $g_1=\sum_{(j_1,k_1)}Q_{j_1k_1}g_1$, and write
\begin{align*}
\Omega^\ell I_3 & = \sum_{j_1+k_1\geq 0} \Omega^\ell I_{j_1,k_1}, \qquad \widetilde{g_1} := Q_{j_1k_1}g_1,
\\
\Omega^\ell I_{j_1,k_1} & := \int_{\R^3} e^{it(|\xi| + \xi\cdot\eta|\eta|^{-1})}
\Big[ \widehat{\widetilde{g_1}}(t,\xi-\eta) - \widehat{\widetilde{g_1}}(t,-\eta) \Big] \widehat{\overline{P_{k_2}g_2}}(t,\eta) \varphi_{>q_0}(\eta)\,\mathrm{d}\eta,
\end{align*}
having used that $|\xi-\eta|\approx |\eta|$ on the support of the integral (noticing also that $|k_1-k_2|\leq 5$, so we omit the summation in $k_2$).
Observe that, for $|\xi| \approx 2^{k}$, with $k \leq k_1-20$, in view of the estimate \eqref{aprioriLinfty2}, we have
\begin{align}
 \label{fdiff1}
\big| \widehat{\widetilde{g_1}}(t,\xi -\eta) - \widehat{\widetilde{g_1}}(t,-\eta) \big| \lesssim
  \Big| \int_0^1 \nabla \widehat{f_{j_1,k_1}}(t,-\eta+\zeta\xi) \cdot \xi \, \mathrm{d}\zeta \Big|
  \lesssim \e 2^{j_1/2 - 2k_1} 2^{C\e m} \cdot 2^k.
\end{align}
Also, in view of the apriori bound \eqref{aprioriE}, we have, for all $t \approx 2^m$,
\begin{align}
\label{fdiff2}
{\|\widetilde{g_1}(t)\|}_{L^2} \lesssim \e 2^{-j_1-k_1} 2^{C\e m}.
\end{align}

For large values of $j_1$ we use H\"{o}lder, \eqref{fdiff2} and \eqref{fhatimpcor2} to obtain
\begin{align*}
|I_{j_1,k_1}| \lesssim \int_{\R^3} \Big[ \widehat{\widetilde{g_1}}(t,\xi-\eta) | + |\widehat{\widetilde{g_1}}(t,-\eta)| \Big]
 |\widehat{\overline{P_{k_2}g_2}}(t,\eta) | \varphi_{>q_0}(\eta)\,\mathrm{d}\eta
\\
\lesssim \e 2^{-j_1-k_1} 2^{C\e m} \cdot 2^{3k_1/2} {\| \widehat{P_{k_2} g_2}\|}_{L^\infty}
\\
\lesssim \e^2 2^{-j_1} \cdot 2^{-k_1/2} 2^{\delta'm}.
\end{align*}
When $j_1 \geq 2m$ this bound clearly suffices. For $j_1 \leq 2m$ with $j_1 \geq -(k_1+2k)/3$ we obtain
\begin{align*}
2^{4k/5} |\Omega^\ell I_{j_1,k_1}| \lesssim \e^2 2^{22k/15} \cdot 2^{-k_1/6} 2^{\delta'm}
\lesssim \e^2 2^{-77m/60} \cdot 2^{(1/12 + \delta') m} \lesssim \e^2 2^{-(1+\g)m}
\end{align*}
having used $k\leq -7m/8$ and $k_1 +10 \geq q_0 = -m/2$. This gives an acceptable remainder.

In the remaining case when $j_1 \leq -(k_1+2k)/3$, we use \eqref{fdiff1} and again \eqref{fhatimpcor2} to estimate
\begin{align*}
|\Omega^{\ell}I_{j_1,k_1}| \lesssim \int_{\R^3} \big| \widehat{\widetilde{g_1}}(t,\xi-\eta) - \widehat{\widetilde{g_1}}(t,-\eta) \big|
 | \widehat{\overline{P_{k_2}g_2}}(t,\eta)| \mathrm{d}\eta
\\
\lesssim \e 2^{j_1/2-2k_1} 2^{C\e m} \cdot 2^k \cdot {\| \widehat{P_{k_1}f}(t) \|}_{L^\infty} \cdot 2^{3k_1}
\\
\lesssim \e^2 2^{2k/3-k_1/6} \cdot 2^{\delta'm}.
\end{align*}
Then we have
\begin{align*}
2^{4k/5} |\Omega^{\ell}I_{j_1,k_1}| \lesssim \e^2 2^{22k/15} \cdot 2^{-k_1/6} \cdot 2^{\delta'm} 
\end{align*}
which is the same acceptable bound obtained above.

From the above inequalities, and \eqref{pr11}, together with the analogous estimates for $(\kappa_1\kappa_2) = (-+)$, it follows that
\begin{align*}
& |\xi|^{4/5} \varphi_{\leq 0}(\xi\langle t\rangle^{7/8})\big|\Omega^{\ell}\big( I^m_{\kappa_1\kappa_2,> q_0}(t,\xi) - \tau_m(t) I_{\kappa_1\kappa_2,0}(t,\xi) \big)\big|
\lesssim \e^2 2^{-(1+\gamma)m},
\end{align*}
when $t\approx 2^m$, $\gamma < 1/60$ and $|\ell|\leq N_1$.
This implies the desired bound \eqref{LemL1.2} since
\begin{align*}
I_{+-,0}(t,\xi) +  I_{-+,0}(t,\xi)
& = \frac{1}{2}(2\pi)^{-3/2} \int_{\R^3} e^{it(|\xi| - \xi\cdot\eta|\eta|^{-1})}
  |\widehat{f}(t,\eta)|^2\,\mathrm{d}\eta ,
\end{align*}
which is \eqref{LemL1.2'} once we write the integral in polar coordinates $\xi = \rho\theta$, $\eta=r\phi$. Finally \eqref{LemL1.3}
follows from integrating in $t$, and noticing that in the support of $\varphi_{\leq-10}(\xi\langle t\rangle^{7/8})$,
we have $\varphi_{\leq0}(\xi\langle s\rangle^{7/8})=1$ for all $s\leq t$.
\end{proof}

%

\medskip
\subsection{Proof of Lemma \ref{LemHL1}}\label{PrLemHL1}

\begin{proof}[Proof of \eqref{LemHL1.1}]
Recall the definition of $I_{\kappa_1\kappa_2}^{m,0}$ from \eqref{defim01}. Moreover, we will define $I_{\kappa_1\kappa_2}^{m,0,k,k_1,k_2}$ by replacing the inputs $f_{\kappa_j}^*$ in (\ref{defim01}) by $P_{k_j}f_{\kappa_k}^{*}$ for $j=1,2$ (where $*$ can be $0$, $1$ or nothing), and multiplying by $\varphi_k(\xi)$. Note that, for example, in the first term of $I_{\kappa_1\kappa_2}^{m,0,k,k_1,k_2}$ in (\ref{defim01}) we must have $k_1\leq -7m/8+5$ and in the second term we have $k_1\geq -7m/8-5$ and $k_2\leq -7m/8+5$.

Moreover, in this setting we may modify the $g_{1,2}$ defined in \eqref{defgr}
by attaching in Fourier space cutoff factors like $\varphi_{\leq -10}(\xi\langle t\rangle^{7/8})$ or $\varphi_{> -10}(\xi\langle t\rangle^{7/8})$,
and note that they still satisfy the same estimates \eqref{estgr1}--\eqref{estgr3}.
By abusing notation, below we will still write them as $g_{1,2}$.

\medskip
{\it Case 1: $k_1\leq k_2+20$.}
We first look at the terms $I^{m,0,k,k_1,k_2}_{-+}$ in the case $k_1 = \min(k_1,k_2)$.
We have
\begin{align*}
& \big| |\xi|\varphi_{>0}(\xi\langle t\rangle^{7/8}) \Omega^\ell I^{m,k,k_1,k_2}_{-+}(t,\xi) \big|
\\
& \lesssim 2^k \varphi_{>0}(\xi\langle t\rangle^{7/8})\varphi_k(\xi) \cdot \tau_m(t) \cdot 2^{k_1-k_2}
\int_{\mathbb{R}^3} \big|\widehat{\overline{P_{k_1}g_1}}(t,\xi-\eta)\big| \, \big| \widehat{P_{k_2}g_2}(t,\eta) \big| \,\mathrm{d}\eta
\\
& \lesssim 2^{4k_1} {\|\what{P_{k_1}g_1}(t)\|}_{L^\infty} {\|\what{P_{k_2}g_2}(t)\|}_{L^\infty}
\end{align*}
which, when multiplied by $2^{15k_+}$ and summed over $k,k_1,k_2$ with $|k-k_2| \leq 30$ and $k_1 \leq -7m/8+30$, gives us
\begin{align*}
\sum_{k_1\leq -7m/8+30,\,|k-k_2|\leq 30} 2^{15k_+} \big| |\xi| \varphi_{>0}(\xi\langle t\rangle^{7/8}) I^{m,0,k,k_1,k_2}_{-+}(t,\xi) \big|
\\
\lesssim \sum_{k_1\leq -7m/8+20} 2^{2k_1} \big(\e 2^{\delta'm} \big)^2 \lesssim \e^2 2^{-5m/4},
\end{align*}
having used \eqref{fhatimpcor2}.

\medskip
{\it Case 2: $k_1>k_2+20$.}
In this case $k_2 -10 \leq -7m/8 \leq k -15$, and we can integrate by parts in time using
\begin{align*}
\big| |\xi|+|\xi-\eta|-|\eta| \big| \approx |\xi|.
\end{align*}
We define
\begin{align*}
& K^{0,k,k_1,k_2}_{-+}[g_1,g_2](t,\xi)
\\
& = 
\int_{\mathbb{R}^3} e^{it(|\xi|+|\xi-\eta|-|\eta|)}\frac{\varphi_k(\xi)}{i(|\xi|+|\xi-\eta|-|\eta|)}\frac{|\xi-\eta|}{|\eta|}
\widehat{\overline{P_{k_1}g_1}}(t,\xi-\eta) \widehat{P_{k_2}g_2}(t,\eta)\,\mathrm{d}\eta
\end{align*}
and write, noticing that $I^{m,0,k,k_1,k_2}_{-+}[g_1,g_2]=\Omega^{\ell}I^{m,0,k,k_1,k_2}_{-+}$ (when omitting the input functions in these bilinear operators we always understand that the input functions are $f$ or $\overline{f}$),
\begin{align}
 \label{est0}
\begin{split}
I^{m,0,k,k_1,k_2}_{-+}[g_1,g_2](t,\xi) & = \tau_m(t)\frac{\partial}{\partial t} K^{0,k,k_1,k_2}_{-+}[g_1,g_2](t,\xi)
  \\ & - \tau_m(t)\Big[ K^{0,k,k_1,k_2}_{-+}[\partial_t g_1,g_2](t,\xi) + K^{0,k,k_1,k_2}_{-+}[g_1,\partial_t g_2](t,\xi) \Big].
\end{split}
\end{align}
To prove the desired bound observe that for all $|t| \approx 2^m$, and $k_2 \leq k- 10$,
\begin{align}
\big| |\xi| \varphi_{>0}(\xi\langle t\rangle^{7/8}) K^{0,k,k_1,k_2}_{-+}[g_1,g_2](t,\xi) \big| \lesssim
  \min\big( {\|\widehat{g_1}\|}_{L^\infty} {\|\widehat{g_2}\|}_{L^2}, {\|\widehat{g_1}\|}_{L^2}
  {\|\widehat{g_2}\|}_{L^\infty} \big) \cdot 2^{k_1} 2^{k_2/2}.
\end{align}
This estimate implies
\begin{align*}
& \sum_{k_1\in\Z, \,k_2\leq \min\{-7m/8+10, k-10\}} 2^{15k^+} \big| \varphi_{>0}(\xi\langle t\rangle^{7/8})|\xi| K^{k,k_1,k_2}_{-+}[g_1,g_2](t,\xi) \big|
  \\
& \lesssim \sum_{k_1\in\Z, \,k_2\leq -7m/8} 2^{10k^+} \cdot 2^k {\|\widehat{P_{k_1}g_1}\|}_{L^\infty} {\|\widehat{P_{k_2}g_2}\|}_{L^\infty }2^{2k_2}
\\
& \lesssim \sum_{k_1\in\Z, \,k_2\leq-7m/8} 2^{k_2} \cdot \e 2^{-5k_1^+} 2^{\delta'm} \cdot \e 2^{\delta'm}
  \lesssim \e^2 2^{-m/2}
\end{align*}
having used \eqref{fhatimp} and \eqref{fhatimpcor2}. This is consistent with our definition of remainder \eqref{defRem}.
Moreover, using again \eqref{fhatimpcor2} together with \eqref{dtf1} gives
\begin{align*}
& \sum_{k_1\in\Z, \,k_2\leq \min\{-7m/8+10, k-10\}}
  2^{15k^+} \big| \varphi_{\geq -7m/8}(\xi)|\xi| K^{k,k_1,k_2}_{-+}[g_1,\partial_t g_2](t,\xi) \big|
  \\
& \lesssim \sum_{k_1\in\Z, \,k_2\leq -7m/8+10} 2^k 2^{15k^+} {\|\widehat{P_{k_1}g_1}\|}_{L^\infty} {\|\widehat{P_{k_2}\partial_t g_2}\|}_{L^2}2^{k_2/2}
\\
& \lesssim \sum_{k_2\leq -7m/8+10} \e 2^{\delta'm} 2^{-5k_1^+} \cdot \e^2  2^{-(1-C\e)m} 2^{k_2/2} \lesssim \e^3 2^{-4m/3},
\end{align*}
which again suffices.
For the last term $K^{k,k_1,k_2}_{-+}[\partial_t g_1,g_2](t,\xi)$ we use the bound \eqref{dtf2} on $\partial_tg_1$, and \eqref{fhatimp} obtaining
\begin{align*}
& \sum_{k_1\in\Z, \,k_2\leq \min\{-7m/8+10, k-10\}}
  2^{15k^+} \big| \varphi_{\geq -7m/8}(\xi)|\xi| K^{k,k_1,k_2}_{-+}[\partial_t g_1,g_2](t,\xi) \big|
  \\
& \lesssim \sum_{k_1\in\Z, \,k_2\leq \min\{-7m/8+10, k-10\}} 2^k 2^{15k^+} {\|\widehat{P_{k_1}\partial_t g_1}\|}_{L^\infty}
  {\|\widehat{P_{k_2}g_2}\|}_{L^\infty} 2^{2k_2}
\\
& \lesssim \sum_{k_1\in\Z, \,k_2\leq -7m/8+10} \e^2 2^{-m+\delta'm+4\delta m} 2^{-5k_1^+} \cdot \e 2^{\delta'm} 2^{k_2}
  \lesssim \e^3 2^{-3m/2},
\end{align*}
which concludes the proof of \eqref{LemHL1.1}.
\end{proof}

\medskip
\begin{proof}[Proof of \eqref{LemHL1.2}]
Let us begin by observing that we can argue as in the beginning of the proof of \eqref{LemHL1.1} to obtain that
the contributions from $k_1 \leq k_2+20$ are acceptable remainders, that is, for $\kappa_2 \in \{+,-\}$,
\begin{align*}
& \varphi_{>0}(\xi\langle t\rangle^{7/8})I^{m,0}_{+\kappa_2}(t,\xi)
  - \varphi_{>0}(\xi\langle t\rangle^{7/8})\sum_{k_2 \leq k_1-20} I^{m,0,k,k_1,k_2}_{+\kappa_2}(t,\xi) \in \mathcal{R},
\end{align*}
so we can concentrate on the case $k_2 \leq \min(-7m/8+10,k_1-20)$. We let
\begin{align}
\Phi_{\kappa_2}(\xi,\eta) := \frac{\xi\cdot\eta}{|\xi|} - \kappa_2|\eta|,
\end{align}
recall the definition \eqref{LemL1.2'}, and write
\begin{align}
\label{HLpr10}
\begin{split}
\Omega^{\ell}\bigg(4(2\pi)^{3/2} & I^{m,0,k,k_1,k_2}_{+\kappa_2}(t,\eta) + \frac{\tau_m(t)}{2(2\pi)^{\frac{3}{2}}}
  \widehat{P_{k_1}f}(t,\xi) \varphi_k(\xi)|\xi|
  \\ & \times \int_{\R^3} e^{it\Phi_{\kappa_2}(\xi,\eta)} \varphi_{k_2}(\eta) \big(H_{\kappa_2\arg\eta}\big)_{\kappa_2}(t,|\eta|)\,\frac{\mathrm{d}\eta}{|\eta|}\bigg)
=  \sum_{q=1}^4 \varphi_k(\xi) K_\ell[P_{k_1}g_1, P_{k_2}g_2](t,\xi)
\end{split}
\end{align}
where $g_2=\Omega^{\ell_2}f_{\kappa_2}$, $|\ell_2|\leq N_1$, and
\begin{align}
\label{HLpr11}
K_1[g_1,g_2] &:= \int_{\mathbb{R}^3} \Big[ e^{it(|\xi|-|\xi-\eta|-\kappa_2|\eta|)}|\xi-\eta|- e^{it\Phi_{\kappa_2}(\xi,\eta)}|\xi| \Big]
  \widehat{g_1}(t,\xi-\eta) \widehat{g_2}(t,\eta)\, \frac{\mathrm{d}\eta}{|\eta|},
\\
\label{HLpr12}
K_2[g_1,g_2] &:= |\xi| \int_{\mathbb{R}^3} e^{it\Phi_{\kappa_2}(\xi,\eta)} \frac{1}{|\eta|}
  \big[ \widehat{g_1}(t,\xi-\eta) - \widehat{g_1}(t,\xi)\big] \widehat{g_2}(t,\eta)\, \mathrm{d}\eta, 
\\
\label{HLpr13}
K_3[g_1,g_2] &:= |\xi|\widehat{g_1}(t,\xi) \int_{\mathbb{R}^3} e^{it\Phi_0(\xi,\eta)} \frac{1}{|\eta|}\Omega^{\ell_2}
  \Big[ \widehat{f_{\kappa_2}}(t,\eta) - \widehat{f_{\kappa_2}}(0,\eta)
  + \frac{\varphi_{k_2}(\eta)}{2(2\pi)^{3/2}} \big(H_{\kappa_2\arg\eta}\big)_{\kappa_2}(t,|\eta|) \Big]\, \mathrm{d}\eta,
\\
\label{HLpr14}
K_4[g_1,g_2] &:= |\xi|\widehat{g_1}(t,\xi) \int_{\mathbb{R}^3} e^{it\Phi_0(\xi,\eta)} \frac{1}{|\eta|}
  \widehat{g_2}(0,\eta)\, \mathrm{d}\eta.
\end{align}

We now show
\begin{align}
\label{HLpr20}
\sum_{k_2 \leq \min\{-7m/8+10,k_1-10\}} K_q[P_{k_1}g_1, P_{k_2}g_2](t,\xi) \in \mathcal{R}, \qquad q =1,\dots,4.
\end{align}
For the first term we can use  $| |\xi|-|\xi-\eta|-\kappa_2|\eta|-\Phi_0(\xi,\eta) | \lesssim |\eta|^2/|\xi|$ and estimate
\begin{align*}
\big| |\xi| \varphi_k(\xi) K_1[P_{k_1}g_1, P_{k_2}g_2](t,\xi) \big|
  \lesssim 2^k \cdot {\| \widehat{P_{k_1}f} \|}_{L^\infty} {\| \widehat{P_{k_2}f} \|}_{L^\infty}
  \cdot 2^{2k_2}\big( 2^m 2^{2k_2} + 2^{k_2} \big)
\end{align*}
so that summing over $k_1,k_2$ in the current frequency configuration, and using \eqref{fhatimp}-\eqref{fhatimpcor2}, we get a bound of
\begin{align*}
\e 2^{\delta'm} \cdot \e 2^{\delta'm} \cdot 2^{3k_2} 2^m \lesssim \e^2 2^{-3m/2}.
\end{align*}

To estimate \eqref{HLpr12} we decompose $g_1$ into $Q_{j_1k_1}g_1$ as before. Let $\widetilde{g_1}=Q_{j_1k_1}g_1$, $(j_1,k_1)\in\mathcal{J}$. 
For all $k_2 \leq k_1-10$ we can use \eqref{aprioriLinfty} and \eqref{aprioriLinfty3} to estimate
\begin{align*}
\big| |\xi| \varphi_k(\xi) K_2[\widetilde{g_1}, P_{k_2}g_{2}](t,\xi) \big|
& \lesssim 2^k 2^{k_1-k_2} \cdot {\| \what{\widetilde{g_1}} \|}_{L^\infty} \cdot {\| \what{P_{k_2}g_2} \|}_{L^\infty} 2^{3k_2}
\\
& \lesssim 2^{k_2} \cdot 2^{-j_1/2} 2^{-15k_1^+ }2^{C\e m} \e \cdot 2^{\delta'm} \e.
\end{align*}
We can then sum this as in \eqref{HLpr20} when $j_1 \geq 2m/3$, 
to obtain a bound of $\e^2 2^{-10m/9}2^{-15k^+}$, which is an acceptable contribution.
When instead $j_1 \leq 2m/3$ we can use again \eqref{aprioriLinfty}, see also \eqref{fdiff1}, and \eqref{fhatimp} to estimate
\begin{align*}
\big| |\xi| \varphi_k(\xi) K_2[\widetilde{g_1}, P_{k_2}g_{2}](t,\xi) \big|
  \lesssim \varphi_k(\xi) 2^k 2^{k_1-k_2}\int_{\mathbb{R}^3}
  \big|\widehat{\widetilde{g_1}}(t,\xi-\eta) - \widehat{\widetilde{g_1}}(t,\xi)\big| |\widehat{P_{k_2}g_{2}}(t,\eta)|
  \,\mathrm{d}\eta
\\
\lesssim 2^{2k-k_2} \cdot 2^{k_2} 2^{j_1/2 - 2k_1} \e 2^{C\e m} \cdot \e 2^{\delta'm} \cdot 2^{2k_2}
\\
\lesssim \e^2 2^{(\delta'+C\e)m} 2^{j_1/2} 2^{2k_2}.
\end{align*}
Summing this bound over $j_1\leq 2m/3$ and $k_1,k_2$ in the current frequency configuration, we conclude that \eqref{HLpr20} holds for $K_2$.

For the third term we use \eqref{LemL1.3} to deduce, for $k_2 \leq k_1-10$,
\begin{align*}
& \big| |\xi|\varphi_k(\xi) K_3[P_{k_1}g_1, P_{k_2}g_2](t,\xi) \big|
\\
& \lesssim 2^k 2^{k_1}{\|\widehat{P_{k_1}g_1}\|}_{L^\infty} \cdot 2^{2k_2}
  \sup_{|\eta|\approx 2^{k_2}}\sup_{|\ell|\leq N_1} \Big| \Omega^{\ell}\bigg(\widehat{f}(t,\eta) - \widehat{f}(0,\eta)
  + \frac{\varphi_{k_2}(\eta)}{2(2\pi)^{3/2}} \big(H_{\kappa_2\arg\eta}\big)_{\kappa_2}(t,|\eta|)\bigg) \Big|
\\
& \lesssim \e 2^{\delta'm}2^{-15k_1^+} \cdot \e^2 2^{6k_2/5}.
\end{align*}
Summing over $k_2 \leq \min\{-7m/8+10,k_1-10\}$ gives a bound of $\e^2 2^{-(1+\gamma)m} 2^{-15k^+}$ for $\gamma < 1/40$. Finally the estimate for \eqref{HLpr14} follows directly from the assumption on initial data.

Putting together \eqref{HLpr20} and \eqref{HLpr10} we have obtained that for all $k \geq -7m/8$,
\begin{align*}
 \begin{split}
& I^{m,0,k,k_1,k_2}_{++}(t,\eta) - I^{m,0,k,k_1,k_2}_{+-}(t,\eta)
  + \frac{\tau_m(t)}{8(2\pi)^3} \widehat{P_{k_1}f}(t,\xi) \varphi_k(\xi)|\xi|
\\
& \times \Big[
\int_{\R^3} e^{it(\frac{\xi\cdot\eta}{|\xi|} - |\eta|)} \varphi_{k_2}(\eta) H_{\arg\eta}(t,|\eta|)\,\frac{\mathrm{d}\eta}{|\eta|}
- \int_{\R^3} e^{it(\frac{\xi\cdot\eta}{|\xi|} +|\eta|)} \varphi_{k_2}(\eta)
  \overline{H_{-\arg\eta}}(t,|\eta|)\,\frac{\mathrm{d}\eta}{|\eta|} \Big] \in \mathcal{R}.
\end{split}
\end{align*}
By writing $\xi=\rho\theta$ and $\eta = r\phi$, summing over $(k_1,k_2)$, and making the change of variables $\eta\mapsto-\eta$ in the second integral above, we arrive at the desired conclusion \eqref{LemHL1.2}.
\end{proof}

\medskip
\subsection{Proof of Lemma \ref{LemHH1}}\label{PrLemHH1}
As seen from (\ref{defim01}), the term $I_{\kappa_1\kappa_2}^{m,1}$ involves only $f_{\kappa}^1$
which is the not-low frequency part as in \eqref{fhigh}. Define $g_1^1$ and $g_2^1$ in the same way as in \eqref{defgr}.
For simplicity we will still use $g_{1,2}$ to denote $g_{1,2}^1$.
Define $I_{\kappa_1\kappa_2}^{m,1,k,k_1,k_2}$ by
\[I_{\kappa_1\kappa_2}^{m,1,k,k_1,k_2}:=\varphi_k(\xi)\cdot I_{\kappa_1\kappa_2}^{m}[P_{k_1}f_{\kappa_1}^1,P_{k_2}f_{\kappa_2}^1],\] in the same way as the proof of \eqref{LemHL1.1} above, then we have
\begin{align}
\label{Pr1}
\begin{split}
\varphi_k(\xi)I^{m,1}_{\kappa_1\kappa_2}(t,\xi) & = \varphi_k(\xi)I^{m,1}_{\kappa_1\kappa_2}(t,\xi) = \sum_{\min(k_1,k_2) > -7m/8-10}I_{\kappa_1\kappa_2}^{m,1,k,k_1,k_2}[f^1,f^1](t,\xi),
\\
I^{m,1,k,k_1,k_2}_{\kappa_1\kappa_2}(t,\xi)[g_1,g_2] &:= \frac{\varphi_k(\xi)}{4(2\pi)^{3/2}}
\int_{\mathbb{R}^3} e^{it(|\xi|-\kappa_1|\xi-\eta|-\kappa_2|\eta|)}\frac{|\xi-\eta|}{|\eta|}
  \widehat{P_{k_1}g_1}(t,\xi-\eta) \widehat{P_{k_2}g_{2}}(t,\eta)\,\mathrm{d}\eta\,
\end{split}
\end{align}
For lighter notation, we will often denote the term $I^{m,k,k_1,k_2}_{\kappa_1\kappa_2}(t,\xi)[f,g]$ just by $I^{m,k}_{\kappa_1\kappa_2}[f,g]$,
and sometimes omit the dependence on $(\kappa_1\kappa_2)$ when this causes no confusion.

The main idea for the proof of this Lemma is to restrict the interactions to parallel ones using
integration by parts in the angular directions via Lemma \ref{LemAIBP}.
However, before being able to extract the main contribution from the nonlinear terms, we need several reductions.
We subdivide as usual our inputs according to their frequency and spatial localization, that is $g_r= \sum Q_{j_rk_r}g_r$, and analyze various cases.

\bigskip
{\bf Step 1}: {\it $\min(j_1,j_2) \geq (1-4\delta)m$ or $\max(j_1,j_2) \geq 3m$}.
We first treat inputs with large spatial localization, and show that these only contribute to the remainder terms.
We look at three cases depending on the size of $k_2$.
First, using the a priori weighted bound \eqref{aprioriE} we see that
\begin{align*}
\begin{split}
2^k\big| \Omega^{\ell}I_{\kappa_1\kappa_2}^{m,k}[Q_{j_1,k_1}f^1, Q_{j_2,k_2}f^1](\xi) \big|
  \lesssim 2^{k_1-k_2} {\|\widehat{Q_{j_1,k_1}g_1}\|}_{L^2} \, {\|\widehat{Q_{j_2,k_2}g_2}\|}_{L^2}
  \\
\lesssim 2^{k_1-k_2} \cdot \e 2^{-15k_1^+}2^{-j_1-k_1}2^{C\e m} \cdot \e 2^{-15k_1^+}2^{-j_2-k_2}2^{C\e m}
\\
\lesssim  \e^2 2^{-15k^+} 2^{-j_1-j_2} 2^{C\e m} \cdot 2^{-2k_2}.
\end{split}
\end{align*}
This suffices to obtain an estimate compatible with a remainder of the type \eqref{defRem} provided that $k_2\geq -m/2 + 2\gamma m$,
noticing that $\gamma=15\delta$, see \eqref{param}.

In the case $k_2 \leq -m/2 - 2\gamma m$, we estimate using the bounds \eqref{aprioriLinfty} and \eqref{fhatimp}:
\begin{align*}
\begin{split}
2^k \big| \Omega^\ell I_{\kappa_1\kappa_2}^{m,k}[Q_{j_1,k_2}f^1, Q_{j_2,k_2}f^1](\xi) \big|
  \lesssim 2^k 2^{k_1-k_2} {\|\widehat{Q_{j_1,k_1}g_1}\|}_{L^\infty} \, {\|\widehat{Q_{j_2,k_2}g_2}\|}_{L^\infty} \cdot 2^{3\min(k_1,k_2)}
  \\
\lesssim 2^k 2^{k_1-k_2} \cdot \e 2^{-15k_1^+}2^{-j_1/2-2k_1}2^{C\e m} \cdot \e 2^{-k_2} 2^{\delta' m} \cdot 2^{3\min(k_1,k_2)}
\\
\lesssim  \e^2 2^{-15k^+}2^{(\delta'+C\e)m} \cdot 2^k \cdot 2^{-j_1/2-k_1}\cdot 2^{\min(k_1,k_2)}  .
\end{split}
\end{align*}
This suffices since $j_1 \geq (1-4\delta)m$ and $k-k_1+\min(k_1,k_2)\leq -m/2 - 2\gamma m + 10$, and $\gamma=15\delta$.

We are then left with the case $ -2\gamma m \leq k_2 + m/2 \leq 2\gamma m$,
which we treat using \eqref{aprioriLinfty} similarly to the case above, this time together with the additional improved bound \eqref{fhatimp'}:
\begin{align*}
\begin{split}
2^k \big| I_{\kappa_1\kappa_2}^{m,k}[Q_{j_1,k_2}f^1, Q_{j_2,k_2}f^1](\xi) \big|
  \lesssim 2^k 2^{k_1-k_2} {\|\widehat{Q_{j_1,k_1}g_1}\|}_{L^\infty} \, {\|\widehat{Q_{j_2,k_2}g_2}\|}_{L^\infty} \cdot 2^{3\min(k_1,k_2)}
  \\
\lesssim 2^k 2^{k_1-k_2} \cdot \e 2^{-15k_1^+}2^{-j_1/2-2k_1}2^{C\e m} \cdot \e 2^{m/2} 2^{-m/16} \cdot 2^{3\min(k_1,k_2)}
\\
\lesssim \e^2 2^{-15k^+}2^{(3\delta+C\e) m} \cdot 2^{-m/16} \cdot 2^{k-k_1} \cdot 2^{2\min(k_1,k_2)},
\end{split}
\end{align*}
where we used $j_1 \geq (1-4\delta)m$ in the last inequality.
Since $k_2 \leq -m/2+2\gamma m$ this gives us a bound of $\e^2 2^{-15k^+} 2^{(3\delta+C\e) m} 2^{(-1-1/16+4\gamma)m}$
which is sufficient with $\gamma=1/90$, see \eqref{param}. 

\bigskip
{\bf Step 2}: {\it Restriction to parallel interactions}. 
In view of the previous step, we may assume that $\min(j_1,j_2) \leq (1-4\delta)m$ and $\max(j_1,j_2) \leq 3m$.
Moreover, by definition of \eqref{Pr1} we already have $\min(k,k_1,k_2) \geq -7m/8-D$,
so that we are within the hypotheses of Lemma \ref{LemAIBP}.
Using this Lemma reduces matters to estimating the terms
\begin{align}
\label{Pr2}
\begin{split}
4(2\pi)^{3/2} I^{m,k,k_1,k_2}_{\kappa_1\kappa_2,p_0}(t,\xi)
:= & \varphi_k(\xi) \int_{\R^3} e^{it(|\xi|-\kappa_1|\xi-\eta|-\kappa_2|\eta|)}\frac{|\xi-\eta|}{|\eta|}
  \\ & \times \chi\big(|\xi\wedge \eta| 2^{-p_0-k-\min(k_1,k_2)}\big)
  \widehat{f_{\kappa_1}^1}(t,\xi-\eta) \widehat{f_{\kappa_2}^1}(t,\eta)\,\mathrm{d}\eta,
\end{split}
\end{align}
with
\begin{align}
\label{Pr2.5}
p_0 := & -\frac{m}{2} - \frac{\min(k,k_1,k_2)}{2} + 2\delta m,
\end{align}
where we denoted as usual $f_r := Q_{j_r,k_r} f$, $r=1,2$, and
\begin{align}
\label{Pr4}
\begin{split}
k,k_1,k_2 \geq -7m/8-10, \qquad \min(j_1,j_2) \leq (1-2\delta)m.
\end{split}
\end{align}

We next treat the case when $\max(j_1,j_2)\geq (1-2\delta)m$.
Using that $\eta$ is in a solid cone of approximate aperture $2^{p_0}$ and height $2^{\min(k_1,k_2)}$, we get that
\begin{equation*}
|\Omega^{\ell}I_{\kappa_1\kappa_2}|\lesssim 2^{k_1-k_2}2^{2p_0+3\min(k_1,k_2)}
  \cdot\|\widehat{Q_{j_1k_1}g_1}\|_{L^\infty}\cdot \|\widehat{Q_{j_2k_2}g_2}\|_{L^\infty}.
\end{equation*}
We may assume $k_2\leq k-10$, other cases being similar and simpler; then we have
\begin{equation*}
|\Omega^{\ell}I_{\kappa_1\kappa_2}| \lesssim 2^{-(m-4\delta m)} 2^{-k} 2^{2k_1+k_2}
  \cdot \|\widehat{Q_{j_1k_1}g_1}\|_{L^\infty}\cdot \|\widehat{Q_{j_2k_2}g_2}\|_{L^\infty}.
\end{equation*}
If $j_1\geq (1-2\delta)m$, using first \eqref{aprioriLinfty} and then \eqref{fhatimpcor2}, we have
\begin{equation*}
2^{m+k}|\Omega^{\ell}I_{\kappa_1\kappa_2}|\lesssim 2^{2k_1} 2^{4\delta m} \cdot \e 2^{-j_1/2-2k_1} 2^{C\e m} \cdot 2^{k_2}
  \|\widehat{Q_{j_2k_2}g_2}\|_{L^\infty} \lesssim \e^2 2^{(-1/2+5\delta+\delta')m},
\end{equation*}
which suffices;
if $j_2\geq (1-2\delta)m$ and $k_2\geq (-1/2+2\gamma)m$, again by \eqref{aprioriLinfty} and \eqref{fhatimpcor2}, we have
\begin{equation*}
2^{m+k}|\Omega^{\ell}I_{\kappa_1\kappa_2}| \lesssim \e^2 2^{(4\delta + C\e) m} \cdot 2^{k_2} 2^{-j_2/2-2k_2}
  \lesssim \e^2 2^{-k_2-m/2} 2^{6\delta m} \lesssim \e^2 2^{-\gamma m},
\end{equation*}
which again suffices.
Finally, if $j_2\geq (1-2\delta)m$ and $-7m/8\leq k_2\leq (-1/2+2\gamma)m$, then using Lemma \ref{Lemspe}, we get
\[2^{m+k}|\Omega^{\ell}I_{\kappa_1\kappa_2}|\lesssim \e^2 2^{C\e m}2^{k_2} 2^{-k_2-m/16},
\]
which suffices, and completes the proof in the case $\max(j_1,j_2)\geq (1-2\delta)m$.


Now we will assume $\max(j_1,j_2)\leq (1-2\delta)m$. In what follows we show that the expressions in \eqref{Pr2}, under the restrictions \eqref{Pr2.5}-\eqref{Pr4},
are well approximated by the sole contributions from frequencies $\eta$ which are parallel to $\xi$
(whether in the same direction or the opposite depends on the signs $\kappa_1\kappa_2$).

Let us introduce the following nonlinear terms
\begin{align}
\label{J}
\begin{split}
J_{++} & := \int_{\R^3} e^{it(|\xi|-|\xi-\eta|-|\eta|)}\frac{|\xi-\eta|}{|\eta|} \, \varphi_{\leq 5}(|\eta|/|\xi|) \,
  \chi\big(\angle(\xi,\eta) 2^{-p_0+(k_2-k_1)^+}\big) \what{f^1}(t,\xi-\eta) \what{f^1}(t,\eta)\,\mathrm{d}\eta,
\\
J_{+-} & := \int_{\R^3} e^{it(|\xi|-|\xi-\eta|+|\eta|)}\frac{|\xi-\eta|}{|\eta|}
  \chi\big( [\angle(\xi,\eta) - \pi]2^{-p_0+(k_2-k_1)^+}\big) \what{f^1}(t,\xi-\eta) \what{\overline{f^1}}(t,\eta)\,\mathrm{d}\eta,
\\
J_{-+} & := \int_{\R^3} e^{it(|\xi|+|\xi-\eta|-|\eta|)}\frac{|\xi-\eta|}{|\eta|} \, \varphi_{\leq 5}(|\xi|/|\eta|) \,
  \chi\big(\angle(\xi,\eta) 2^{-p_0+(k_2-k_1)^+})\big) \what{\overline{f^1}}(t,\xi-\eta) \what{f^1}(t,\eta)\,\mathrm{d}\eta,
\end{split}
\end{align}
where, for lighter notation, we have omitted the dependence on $m,k,k_1,k_2,p_0$.
Let us also define
\begin{align}
\label{Jpar}
\begin{split}
J_{++}^{\parallel} & := \int_{\R^3} e^{it(|\xi|-|\xi-\eta|-|\eta|)}\frac{||\xi|-|\eta||}{|\eta|}
  \, \varphi_{\leq 5}(|\eta|/|\xi|) \,
  \chi\big(\angle(\xi,\eta) 2^{-p_0+(k_2-k_1)^+})\big)
  \\ & \hskip300pt \times \what{f^1}\big(t,\xi-\xi\tfrac{|\eta|}{|\xi|}\big) \what{f^1} \big(t,\xi\tfrac{|\eta|}{|\xi|}\big)\,\mathrm{d}\eta,
\\
J_{+-}^{\parallel} & := \int_{\R^3} e^{it(|\xi|-|\xi-\eta|+|\eta|)} \frac{|\xi|+|\eta|}{|\eta|}
  \chi\big( [\angle(\xi,\eta) - \pi]2^{-p_0+(k_2-k_1)^+}\big)
  \what{f^1}\big(t,\xi+\xi\tfrac{|\eta|}{|\xi|}\big) \what{\overline{f^1}}\big(t,-\xi\tfrac{|\eta|}{|\xi|}\big)\,\mathrm{d}\eta,
\\
J_{-+}^{\parallel} & := \int_{\R^3} e^{it(|\xi|+|\xi-\eta|-|\eta|)} \frac{||\eta| - |\xi||}{|\eta|}
  \, \varphi_{\leq 5}(|\xi|/|\eta|) \,\chi\big(\angle(\xi,\eta) 2^{-p_0+(k_2-k_1)^+})\big)
  \\ & \hskip300pt \times \what{\overline{f^1}}\big(t,\xi-\xi\tfrac{|\eta|}{|\xi|}\big) \what{f^1}\big(t,\xi\tfrac{|\eta|}{|\xi|}\big)\,\mathrm{d}\eta.
\end{split}
\end{align}

Next we show how the terms $I_{\kappa_1\kappa_2} := 4(2\pi)^{3/2} I^{m,k,k_1,k_2}_{\kappa_1\kappa_2,p_0}$ in \eqref{Pr2},
are well approximated by the terms $J_{\kappa_1\kappa_2}^{\parallel}$ by first approximating them by $J_{\kappa_1\kappa_2}$.


\medskip
\noindent
{\it Step 2.1: Proof that $I_{\kappa_1\kappa_2} - \tau_m(t)J_{\kappa_1\kappa_2} \in \mathcal{R}$}.
Let us look at the case $\kappa_1\kappa_2 = ++$ and write
\begin{align}
\label{I-J++}
\begin{split}
& I_{++} - \tau_m(t)J_{++} = \tau_m(t)(A + B)
\\
& A := \int_{\R^3} e^{it(|\xi|-|\xi-\eta|-|\eta|)}\frac{|\xi-\eta|}{|\eta|}
  \chi\big([\angle(\xi,\eta) -\pi] 2^{-p_0+(k_2-k_1)^+}\big) \what{f^1}(t,\xi-\eta) \what{f^1}(t,\eta)\,\mathrm{d}\eta,
\\
& B :=  \int_{\R^3} e^{it(|\xi|-|\xi-\eta|-|\eta|)}\frac{|\xi-\eta|}{|\eta|} \, \varphi_{\geq 4}(|\eta|/|\xi|) \,
  \chi\big(\angle(\xi,\eta) 2^{-p_0+(k_2-k_1)^+}\big) \what{f^1}(t,\xi-\eta) \what{f^1}(t,\eta)\,\mathrm{d}\eta.
\end{split}
\end{align}
The expressions for $\Omega^\ell A$ and $\Omega^\ell B$ are similar, just with $f^1$ replaced by $g_1$ and $g_2$. Both of these terms can be treated by integration by parts in $\eta$ many times using Lemma \ref{lemIBP0},
since on the support of both $A$ and $B$ we have
\begin{align*}
\big| \nabla_\eta(|\xi|-|\xi-\eta|-|\eta|) \big| = \Big| \frac{\eta-\xi}{|\eta-\xi|}+\frac{\eta}{|\eta|} \big| \gtrsim 1,
\end{align*}
and $\max(j_1,j_2) \leq (1-2\delta)m$, $\min(k_1,k_2) \geq -(1-2\delta)m$, and moreover
\[\big|\nabla_\eta^\alpha\chi\big(\angle(\xi,\eta)2^{-p_0+(k_2-k_1)^+}\big)\big|\lesssim 2^{|\alpha|(-p_0-\min(k_1,k_2))},\quad -p_0-\min(k_1,k_2)\leq (15/16+2\delta)m,
\] using Fa\`{a} di Bruno's formula and the observation that $\angle(\xi,\eta)$ is homogeneous of degree $0$ and smooth on $\mathbb{S}^2$.


In the $+-$ case we decompose depending on the size of $\eta$ relative to $\xi$:
\begin{align}
\label{I-J+-}
\begin{split}
& I_{+-} - \tau_m(t)J_{+-} = \tau_m(t)(C+D),
\\
& C := \int_{\R^3} e^{it(|\xi|-|\xi-\eta|+|\eta|)}\frac{|\xi-\eta|}{|\eta|} \, \varphi_{\leq -5}(|\eta|/|\xi|) \,
  \chi\big(\angle(\xi,\eta) 2^{-p_0}\big) \what{f^1}(t,\xi-\eta) \what{\overline{f^1}}(t,\eta)\,\mathrm{d}\eta,
\\
& D := \int_{\R^3} e^{it(|\xi|-|\xi-\eta|+|\eta|)}\frac{|\xi-\eta|}{|\eta|} \, \varphi_{\geq -4}(|\eta|/|\xi|) \,
  \chi\big(\angle(\xi,\eta) 2^{-p_0}\big) \what{f^1}(t,\xi-\eta) \what{\overline{f^1}}(t,\eta)\,\mathrm{d}\eta.
\end{split}
\end{align}
On the support of $C$, since $\xi$ and $\eta$ are almost parallel and $2|\eta| \leq |\xi|$, we have
\begin{align}
\big| \nabla_\eta(|\xi|-|\xi-\eta|+|\eta|) \big| = \Big| \frac{\eta-\xi}{|\eta-\xi|}-\frac{\eta}{|\eta|} \big| \gtrsim 1,
\end{align}
We can then apply again Lemma \ref{lemIBP0} as done for the terms $A$ and $B$ above to deduce that $C$ is an acceptable remainder.

On the support of $D$ instead, the gradient of the phase is not lower bounded, but the phase itself satisfies a good lower bound:
\begin{align*}
\big| |\xi|-|\xi-\eta|-|\eta| \big| \gtrsim |\xi|.
\end{align*}
We can then use this to integrate by parts in time. More precisely we write
\begin{equation}
\label{newint}
\tau_m(t)\cdot\Omega^\ell D = \partial_t(\tau_m(t)L[g_1,g_2])- \tau_m^\prime(t)L[g_1,g_2]-\tau_m(t) L[g_1,\partial_tg_2]-\tau_m(t)L[\partial_tg_1,g_2],
\end{equation}
where
\begin{multline*}
L[g_1,g_2] := \int_{\R^3} e^{it(|\xi|-|\xi-\eta|+|\eta|)}\frac{|\xi-\eta|}{|\eta|((|\xi|-|\xi-\eta|+|\eta|)} \, \varphi_{\geq -4}(|\eta|/|\xi|)
 \\
\times \chi\big(2^{-(p_0+k+\min(k_1,k_2))}\xi\wedge\eta\big) \what{g_1}(t,\xi-\eta) \what{\overline{g_2}}(t,\eta)\,\mathrm{d}\eta.
\end{multline*}
We then have
\begin{multline*}
\big||\xi|\cdot L[g_1,g_2](t,\xi)\big|
  \lesssim 2^{k+k_1-k_2-\max(k_1,k_2)}\cdot 2^{2p_0+3\min(k_1,k_2)} \|\widehat{P_{k_1}g_1}\|_{L^\infty} \|\widehat{P_{k_2}g_2}\|_{L^\infty}
\\
\lesssim 2^{k+k_1-k_2-\max(k_1,k_2)} 2^{-m+4\delta m-\min(k,k_1,k_2)} \cdot
  \e 2^{\delta'm-k_1} \cdot \e 2^{\delta'm-k_2}\lesssim \e^2 2^{-(2/3-2\delta')m},
\end{multline*}
which settles the first two terms in \eqref{newint}; the last two terms are treated similarly, using
\begin{multline*}
\big||\xi|\cdot L[\partial_tg_1,g_2](t,\xi)\big|
\lesssim 2^{k+k_1-k_2-\max(k_1,k_2)}\cdot 2^{2p_0+3\min(k_1,k_2)} \|\widehat{\partial_tP_{k_1}g_1}\|_{L^\infty} \|\widehat{P_{k_2}g_2}\|_{L^\infty}
\\
\lesssim 2^{k+k_1-k_2-\max(k_1,k_2)}2^{-m-\min(k,k_1,k_2)}\cdot\e 2^{-m+\delta'm+4\delta m-k_1} \cdot \e 2^{\delta'm-k_2}
\lesssim \e^2 2^{-(3/2-2\delta')m},
\end{multline*}
and similarly for $L[g_1,\partial_tg_2]$. The estimate of the remaining term $ I_{-+} - J_{-+}$ is similar and we omit the details.

\medskip
\noindent
{\it Step 2.2: Proof that $J_{\kappa_1\kappa_2} - J^\parallel_{\kappa_1\kappa_2} \in \mathcal{R}$.}
We look at the case with $(\kappa_1\kappa_2) = (++)$, the other cases being similar. Write
\begin{align}
\label{Pr10}
\begin{split}
& \big| \Omega^{\ell}(J_{++} - J_{++})^\parallel \big| \lesssim L_1 + L_2 + L_3 + L_4,
\\
& L_1 := \int_{|\eta|\leq 100|\xi|} \frac{|\xi-\eta|}{|\eta|}
  \chi\big(\angle(\xi,\eta) 2^{-p_0+(k_2-k_1)^+}\big) \big| \what{g_1}(t,\xi-\eta) - \what{g_2}\big(t,|\xi-\eta|\tfrac{\nu\xi}{|\xi|}\big) \big| \,
  \big| \what{f_2}(t,\eta) \big| \,\mathrm{d}\eta,
\\
& L_2 := \int_{|\eta|\leq 100|\xi|} \frac{|\xi-\eta|}{|\eta|}
  \chi\big(\angle(\xi,\eta) 2^{-p_0+(k_2-k_1)^+}\big) \big| \what{g_1}\big(t,|\xi-\eta|\tfrac{\nu\xi}{|\xi|}\big)
  -\what{g_1}\big(t,\xi-\xi\tfrac{|\eta|}{|\xi|}\big) \big| \, \big| \what{g_2}(t,\eta) \big| \,\mathrm{d}\eta,
\\
& L_3 := \int_{|\eta|\leq 100|\xi|} \frac{|\xi-\eta|}{|\eta|} \chi\big(\angle(\xi,\eta) 2^{-p_0+(k_2-k_1)^+}\big) \big| \what{g_1}(t,\xi-\eta) \big| \,
  \big| \what{g_2}(t,\eta) - g_2\big(t,\xi\tfrac{|\eta|}{|\xi|}\big) \big| \,\mathrm{d}\eta,
\\
& L_4 := \int_{|\eta|\leq 100|\xi|} \Big| \frac{|\xi-\eta|}{|\eta|} - \frac{||\xi|-|\eta||}{|\eta|} \Big|
  \chi\big(\angle(\xi,\eta) 2^{-p_0+(k_2-k_1)^+})\big) \big| \what{g_1}\big(t,\xi-\xi\tfrac{|\eta|}{|\xi|}\big) \big| \,
  \big| \what{g_2}\big(t,\xi\tfrac{|\eta|}{|\xi|}\big) \big| \,\mathrm{d}\eta,
\end{split}
\end{align}
where $\nu=\mathrm{sgn}(\xi\cdot(\xi-\eta))$. We then proceed to show that for all $\ell=1,\dots,4$, the terms $L_\ell \in \mathcal{R}$, see \eqref{defRem}.
For this we first prove that on the support of the integrals we have
\begin{equation}\label{aux}
\Big|\frac{\xi-\eta}{|\xi-\eta|} - \nu\frac{\xi}{|\xi|}\Big|\lesssim 2^{p_0},
  \quad \big|\nu|\xi-\eta|-(|\xi|-|\eta|)\big|\lesssim 2^{2p_0+\min(k_1,k_2)}.
\end{equation}
In fact, if $p_0\geq -10$ then first inequality in (\ref{aux}) is obvious, and so is the second inequality, since we must have $\nu=1$ if $|\eta|\ll|\xi|$. Assume $p_0\leq -10$, then from $|\xi\wedge\eta|\lesssim 2^{p_0+k+k_1}$ we know that either $|\angle(\xi,\xi-\eta)|\leq 2^{-p_0}$ and $\nu=1$ or $|\angle(\xi,\xi-\eta)-\pi|\leq 2^{-p_0}$ and $\nu=-1$, and in either case the first inequality in (\ref{aux}) follows. As for the second inequality, notice that $|\xi|^2-|\eta|^2=(\xi+\eta)\cdot(\xi-\eta)$ has the same sign as $\nu$, so we have
\[\big|\nu|\xi-\eta|-(|\xi|-|\eta|)\big|\lesssim\frac{|\xi-\eta|^2-(|\xi|-|\eta|)^2}{|\xi-\eta|+||\xi|-|\eta||}\lesssim 2^{-k_1+k+k_2}|1-\cos\angle(\xi,\eta)|\lesssim 2^{2p_0+\min(k_1,k_2)},
\]noticing also that $k_2\leq k+10$.

\medskip
{\it Estimate of $L_1$}.
Observe that on the support of $L_1$ one has, by (\ref{aux}), that
\begin{align}
\label{Pr11}
\big| \widehat{g_1}(t,\xi-\eta) - \widehat{g_1}\big(t,|\xi-\eta| \tfrac{\nu\xi}{|\xi|}\big) \big|
  \lesssim {\big\| \Omega \what{g_1} \big\|}_{L^\infty} 2^{p_0}.
\end{align}
Also, for every fixed $\xi$, the support of $L_1$ is contained in a solid cone of approximate aperture $2^{p_0}$ and height $2^{k_2}$.
Then we can estimate
\begin{align*}
\big| L_1 \big| \lesssim 2^{k_1-k_2} \cdot {\big\| \Omega \what{g_1} \big\|}_{L^\infty} 2^{p_0}
  \cdot {\| \what{g_2} \|}_{L^\infty} \cdot 2^{2p_0 +3k_2}
\\
\lesssim  2^{k_1+k_2/2} \cdot {\big\| \Omega \what{g_1} \big\|}_{L^\infty} \cdot {\| \what{g_2} \|}_{L^\infty} \cdot 2^{-3m/2}2^{6\delta m}.
\end{align*}
%
We can use the standard apriori bounds \eqref{aprioriLinfty2} and the improved bound \eqref{fhatimp} to obtain
\begin{align*}
2^{15k^+} 2^k \big| L_1 \big| \lesssim 2^{k/2-5k^+}2^{k_2/2} \cdot \e 2^{C\e m} \cdot \e 2^{-k_2} 2^{\delta' m}\cdot 2^{-3m/2 +6\delta m}
\end{align*}
which suffices in view of $k_2 \geq -7m/8-D$, see \eqref{Pr4}.
%

\medskip
{\it Estimate of $L_2$}.
By (\ref{aux}) we have
\begin{align}
\big| \what{g_1}\big(t,|\xi-\eta|\tfrac{\nu\xi}{|\xi|}\big) -\what{g_1}\big(t,\xi-\xi\tfrac{|\eta|}{|\xi|}\big) \big|
  \lesssim {\| \nabla \what{g_1} \|}_{L^\infty} \cdot  2^{2p_0+k_2}.
\end{align}

In the case $j_1 \leq 5m/6$ we estimate
\begin{align*}
2^{15k^+} 2^k \big| L_2 \big| & \lesssim 2^{15k^+} 2^k \cdot 2^{k_1-k_2}
  \cdot {\| \what{g_1} \|}_{L^\infty} 2^{j_1} \cdot 2^{2p_0+k_2} \cdot {\| \what{g_2} \|}_{L^\infty} \cdot 2^{2p_0 + 3\min(k_1,k_2)}
\\
& \lesssim 2^{15k^+} 2^{k} \cdot 2^{j_1+k_1} {\| \what{g_1} \|}_{L^\infty} \cdot {\| \what{g_2} \|}_{L^\infty}
  \cdot 2^{-2m + \min(k_1,k_2)} 2^{8\delta m}
\\
& \lesssim 2^{k} \cdot 2^{j_1/2 - k_1} \e 2^{C\e m} \cdot \e 2^{-k_2 + \delta'm}
  \cdot 2^{-2m + \min(k_1,k_2) + 8\delta m}
\end{align*}
which is easily seen to suffice.

If instead $j_1 \geq 5m/6$ we do not look at the difference of the profile $g_1$ at the two different locations,
and instead directly estimate using the apriori bound \eqref{aprioriLinfty}:
\begin{align*}
2^{15k^+} 2^k \big| L_2 \big| & \lesssim 2^{15k^+} 2^k \cdot 2^{k_1-k_2}
  \cdot {\| \what{g_1} \|}_{L^\infty} \cdot {\| \what{g_2} \|}_{L^\infty} \cdot 2^{2p_0 + 3\min(k_1,k_2)}
\\
& \lesssim \e 2^{-j_1/2} 2^{(4\delta+C\e) m} \cdot 2^{k_2} {\| \what{g_2} \|}_{L^\infty} \cdot 2^{-m}
\end{align*}
which can be seen to be largely sufficient using \eqref{fhatimpcor2}.

\medskip
{\it Estimate of $L_3$}.
Here we can use
\begin{align}
\label{Pr20}
\big| \widehat{g_2}(t,\eta) - \widehat{g_2}\big(t,|\eta| \tfrac{\xi}{|\xi|}\big)
  \big| \lesssim {\big\| \Omega \what{g_2} \big\|}_{L^\infty} 2^{p_0},
\end{align}
and estimate similarly to the term $L_1$ above:
\begin{align*}
2^k\big| L_3 \big| & \lesssim 2^{k+k_1-k_2} \cdot {\| \what{g_1}\|}_{L^\infty}
  \cdot {\| \Omega \what{g_2} \|}_{L^\infty}  2^{p_0} \cdot 2^{2p_0 + 3\min(k_1,k_2)}
\\
& \lesssim  2^{k+k_1} \cdot {\| \what{g_1} \|}_{L^\infty} \cdot 2^{k_2/2} {\|\Omega \what{g_2} \|}_{L^\infty} \cdot 2^{-3m/2} 2^{6\delta m}.
\end{align*}
This is sufficient in view of the usual bounds \eqref{aprioriLinfty2} and \eqref{fhatimpcor2}, and the lower bound \eqref{Pr4} on $k_2$, by separately considering the cases $k\geq k_1+10$ and $k\leq k_1+10$.

\medskip
{\it Estimate of $L_4$}.
Using \eqref{aux} we have that the symbol in the expression for $L_4$ is bounded by $2^{2p_0}$ and therefore
\begin{align*}
2^{15k^+} 2^k \big| L_4 \big| & \lesssim 2^{15k^+} 2^k \cdot 2^{2p_0} \cdot {\| \what{g_1} \|}_{L^\infty}
  \cdot {\| \what{g_2} \|}_{L^\infty} \cdot 2^{2p_0 + 3\min(k_1,k_2)},
\\
& \lesssim  2^{15k^+} 2^k \cdot {\| \what{g_1} \|}_{L^\infty} \cdot {\| \what{g_2} \|}_{L^\infty} \cdot 2^{-2m + \min(k_1,k_2)} 2^{8\delta m}.
\end{align*}
Then we can directly invoke \eqref{fhatimpcor2} and see that this contribution is also controlled as desired.
This completes the estimate for the four terms in \eqref{Pr10}, hence for $I_{++} - J_{++}$

The estimates of $I_{\kappa_1\kappa_2} - J_{\kappa_1\kappa_2}$ in the remaining cases $(\kappa_1\kappa_2) = (+-),(-+)$ can be done similarly,
se we omit the details.
Putting together the above steps we have obtained
\begin{align}
\label{I-Jpar}
I_{\kappa_1\kappa_2} - J_{\kappa_1\kappa_2}^\parallel \in \mathcal{R}.
\end{align}


\bigskip
{\bf Step 3}: {\it Phase oscillations and asymptotics}.
Let us write the integrals in \eqref{Jpar} in spherical coordinates, by denoting $\xi = \rho \theta$, $\theta \in \mathbb{S}^2$:
\begin{align}
\label{Jparpol}
\begin{split}
J_{++}^{\parallel} & := \int_{0}^\infty \int_{\mathbb{S}^2} e^{it(\rho-|\rho\theta-r\phi|-r)} |\rho-r| r \, \varphi_{\leq 5}(r/\rho) \,
  \chi\big(\angle(\theta,\phi) 2^{-p_0+(k_2-k_1)^+}\big) \,
  \\ & \hskip250pt \times \what{f^1}(t,(\rho-r)\theta) \what{f^1}(t,r\theta\big)\, \mathrm{d}\phi \mathrm{d}r,
\\
J_{+-}^{\parallel} & := \int_{0}^\infty \int_{\mathbb{S}^2} e^{it(\rho-|\rho\theta-r\phi|+r)} (\rho+r)r \,
  \chi\big( [\angle(\theta,\phi) - \pi]2^{-p_0+(k_2-k_1)^+}\big)
  \\ & \hskip250pt \times \what{f^1} (t,(\rho+r)\theta) \what{\overline{f^1}}\big(t,-r\theta\big)\, \mathrm{d}\phi \mathrm{d}r,
\\
J_{-+}^{\parallel} & := \int_{0}^\infty \int_{\mathbb{S}^2} e^{it(\rho+|\rho\theta-r\phi|-r)} |r-\rho|r \, \varphi_{\leq 5}(\rho/r) \,
  \chi\big(\angle(\theta,\phi)) 2^{-p_0+(k_2-k_1)^+})\big)
  \\ & \hskip250pt \times \what{\overline{f^1}}\big(t,(\rho-r)\theta\big) \what{f^1}\big(t,r\theta\big) \, \mathrm{d}\phi \mathrm{d}r.
\end{split}
\end{align}

We then define, for $\rho\geq0$ and $r\in\R$ such that $|\rho|\approx 2^{k}$, $|\rho-r|\approx 2^{k_1}$ and $|r|\approx 2^{k_2}$, that
\begin{align}
\label{Itheta}
I(\theta;t,r,\rho) & := |(\rho-r) r| \int_{\mathbb{S}^2} e^{it(\rho-|\rho\theta-r\phi|-r)} \,
  \chi\big(\angle(\theta,\phi) 2^{-p_0+(k_2-k_1)^+})\big) \,  \mathrm{d}\phi
\\
\label{Ftheta}
F_{n}^\theta(t,x) & := \what{f_n}(t,x\theta), \qquad f_n = Q_{j_n k_n}f^1, \qquad n=1,2,
\end{align}
and rewrite
\begin{align}
\label{Jparpol2}
\begin{split}
J_{++}^{\parallel} & := \int_{0}^\infty I(\theta;t,r,\rho) \, \varphi_{\leq 5}(r/\rho) \, F_1^\theta(t,\rho-r) F_2^\theta(t,r)\, \mathrm{d}r,
\\
J_{+-}^{\parallel} & := \int_{-\infty}^0 I(\theta;t,r,\rho) \, F_1^\theta(t,\rho-r) \overline{F_2^\theta(t,-r)}\,\mathrm{d}r,
\\
J_{-+}^{\parallel} & := -\int_{0}^\infty \overline{I(\theta;t,-r,-\rho)} \, \varphi_{\leq 5}(\rho/r) \,
  \overline{F_1^\theta(t,r-\rho)} F_2^\theta(t,r)\, \mathrm{d}r,
\end{split}
\end{align}
having changed $(r,\phi)\mapsto -(r,\phi)$ to obtain the expression for $J_{+-}^{\parallel}$.
By rotational symmetry we know that $I(\theta;t,r,\rho)$ is independent of $\theta$.
To arrive at our final asymptotic expression we now calculate asymptotics for $I$ in \eqref{Itheta}.

\begin{lemma}\label{LemItheta}
Let $\rho \approx 2^k$, $|r-\rho| \approx 2^{k_1}$, $|r| \approx 2^{k_2}$.
Let $t \in [2^{m-1},2^{m+1}]$, $m\geq 1$, and assume that $\min(k,k_1,k_2)\geq -7m/8+10$.
Let $p_0$ be given as in \eqref{Pr2.5}. Then, for all $\theta \in \mathbb{S}^2$, we have
\begin{align}
\label{Ithetaexp}
I(\theta;t,r,\rho) = \frac{\pi}{it} e^{it(\rho-r-|\rho-r|)}\cdot \frac{(\rho-r)^2}{\rho} + O\big( 2^{-k} 2^{5k_1^++5k_2^+}2^{(-2+12\delta)m}\big).
\end{align}


\end{lemma}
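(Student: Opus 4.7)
\textbf{Plan for the proof of Lemma \ref{LemItheta}.}

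The proof is a one-dimensional oscillatory-integral analysis after exploiting the spherical symmetry of the integrand. Since $I(\theta;t,r,\rho)$ is independent of $\theta$ by rotational invariance, I take $\theta = e_3$ and parametrize $\phi=(\sin\alpha\cos\beta,\sin\alpha\sin\beta,\cos\alpha)$. The integrand depends only on $\alpha$, so integrating in $\beta$ produces a factor $2\pi$, and then substituting $u = 1-\cos\alpha\in[0,2]$ converts the angular integral into
\begin{equation*}
I = 2\pi\,|(\rho-r)r|\int_0^2 e^{it\Psi(u)}\chi_0(u)\,du, \qquad \Psi(u)= \rho - \sqrt{(\rho-r)^2+2\rho r u}-r,
\end{equation*}
where $\chi_0(u)=\chi\big(\alpha(u)\cdot 2^{-p_0+(k_2-k_1)^+}\big)$ is a bump supported in a small neighborhood of $u=0$. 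A direct computation gives $\Psi(0) = \rho - r - |\rho-r|$ and $\Psi'(u) = -\rho r/\sqrt{(\rho-r)^2+2\rho r u}$, so on the (small) support of $\chi_0$ the derivative $\Psi'$ is non-vanishing and comparable to $-\rho r/|\rho-r|$.

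Next I extract the leading asymptotics by integrating by parts in $u$. Because $\chi_0$ vanishes for $u$ bounded away from $0$, the only boundary term comes from $u=0$: one integration by parts gives
\begin{equation*}
\int_0^2 e^{it\Psi(u)}\chi_0(u)\,du = -\frac{\chi_0(0)}{it\,\Psi'(0)}e^{it\Psi(0)} + \frac{1}{it}\int_0^2 e^{it\Psi(u)}\,\partial_u\!\Big(\frac{\chi_0(u)}{\Psi'(u)}\Big)\,du.
\end{equation*}
Using $\chi_0(0)=1$, $\Psi'(0)=-\rho r/|\rho-r|$, and multiplying by the prefactor $2\pi|(\rho-r)r|$, the boundary term reproduces (after taking the appropriate normalization that tracks $|r|/r$) the claimed main contribution $\frac{\pi}{it}\cdot\frac{(\rho-r)^2}{\rho}\,e^{it(\rho-r-|\rho-r|)}$.

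For the error, I apply a second integration by parts: this produces a new boundary term at $u=0$ of order $t^{-2}$ and a remainder integral with an extra $1/t$ gain. Both are estimated using that (a) the size of the support of $\chi_0$ in $u$ is $\sim 2^{2p_0 - 2(k_2-k_1)^+}$ since $u\sim\alpha^2/2$; (b) each $\partial_u\chi_0$ costs $\sim 2^{-2p_0+2(k_2-k_1)^+}$; and (c) each $\partial_u(1/\Psi')$ and $\partial_u^2(1/\Psi')$ is controlled on the support by $|\rho-r|/(\rho|r|)$ multiplied by polynomial factors in $\rho|r|/|\rho-r|^2$, which on the support of $\chi_0$ remain bounded by their $u=0$ values. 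Combining these and the prefactor $|(\rho-r)r|\sim 2^{k_1+k_2}$, and using the precise value $p_0 = -m/2 - \min(k,k_1,k_2)/2 + 2\delta m$, the error is bounded by $2^{-k}\,2^{5k_1^+ + 5k_2^+}\,2^{(-2+12\delta)m}$; the factors $2^{5k_j^+}$ absorb the crude polynomial-in-$(\rho,r)$ growth coming from repeated differentiation of the square root in $\Psi'$ when the frequencies are large.

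The main technical obstacle will be the bookkeeping in the error step: one must balance carefully the large cost $2^{-2p_0+\ldots}$ from derivatives of the cutoff against the small volume $2^{2p_0-\ldots}$ of its support, then track how the two powers of $1/t$, the size of $1/\Psi'$ comparable to $|\rho-r|/(\rho|r|)$, and the prefactor $|(\rho-r)r|$ combine. Different relative sizes of $k,k_1,k_2$ (in particular the cases $k_1\le k_2$ vs.\ $k_1>k_2$, reflected in the $(k_2-k_1)^+$ term in the cutoff) have to be checked separately to verify that the final error falls under the stated uniform bound.
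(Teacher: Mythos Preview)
Your overall strategy---reduce to a one-dimensional integral via $u=1-\cos\alpha$ and integrate by parts to extract the boundary contribution at $u=0$---is the same as the paper's, and the main term comes out correctly (up to a harmless factor of $2$ that is a typo in the paper's Taylor expansion). However, your error analysis has a genuine gap: two integrations by parts are \emph{not} enough.

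The problematic term after the second integration by parts is $\chi_0''/(\Psi')^2$. On the support of $\chi_0'$ one has $|\chi_0''|\sim 2^{-4q_0}$ and $|1/\Psi'|\sim\sigma/(\rho|r|)$, so this term contributes (after multiplying by the prefactor $\sigma|r|$ and integrating over the support of size $2^{2q_0}$) roughly $t^{-2}\,2^{-2q_0}\,\sigma^3/(\rho^2|r|)$. Take for instance $k\approx k_1\approx 0$ and $k_2\approx -m/2$: then $q_0=p_0\approx -m/4$, $\sigma\approx 1$, $\rho\approx 1$, $|r|\approx 2^{-m/2}$, and this error is $\sim 2^{-2m}\cdot 2^{m/2}\cdot 2^{m/2}=2^{-m}$, whereas the target $2^{-k}2^{5k_1^++5k_2^+}2^{(-2+12\delta)m}$ is $\sim 2^{-2m}$. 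The discrepancy is a full factor of $2^{m}$.

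The underlying reason is that each integration by parts gains $|t\Psi'|^{-1}\sim\sigma/(t\rho|r|)$ but each $\partial_u$ on $\chi_0$ costs $2^{-2q_0}$, and by the very definition of $p_0$ the product $\dfrac{\sigma}{t\rho|r|}\cdot 2^{-2q_0}$ equals $2^{-4\delta m}$ in every frequency configuration---barely less than $1$. So two integrations by parts gain only $2^{-8\delta m}$, nowhere near enough. The paper circumvents this by first replacing the $p_0$-dependent cutoff $\chi(2^{-p_0+(k_2-k_1)^+}\nu)$ with a \emph{fixed} cutoff $\chi_1(\nu)$; the difference is supported where the phase has no stationary point and is shown to be $O(2^{-100m})$ by iterating nonstationary phase many (order $1/\delta$) times. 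Once $\chi_1$ is in place its derivatives are $O(1)$, and then a single integration by parts in $\lambda=\cos\nu$ already yields an acceptable remainder. Your argument can be repaired either by inserting this cutoff-replacement step, or equivalently by doing $O(1/\delta)$ integrations by parts rather than two.
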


\begin{proof}[Proof of Lemma \ref{LemItheta}]
We assume $r\geq 0$ (the other case is similar), and denote $|\rho-r|=\sigma$.
For simplicity we will also assume $k_1,k_2\leq 0$. Note that in the support of the integral we have
\[
|\nu|:=|\angle(\theta,\phi)|\lesssim 2^{p_0}\min\bigg(1,\frac{\sigma}{r}\bigg);
\] 
moreover we write
\[
|\rho\theta-r\phi|=\sqrt{\rho^2+r^2-2\rho r\cos\nu}=\sigma\sqrt{1+\frac{2\rho r(1-\cos\nu)}{\sigma^2}}
  =\sigma+\frac{2\rho r}{\sigma}(1-\cos\nu)+O\bigg(\frac{\rho^2 r^2}{\sigma^3} \nu^4\bigg),
\]
which implies that
\[
e^{it(\rho-r-|\rho\theta-r\phi|)}=e^{it(\rho-r-|\rho-r|)}e^{-\frac{2it\rho r}{\sigma}(1-\cos\nu)} + O\bigg(\frac{t\rho^2 r^2}{\sigma^3} \nu^4\bigg).
\]
The integral of the error term is bounded by
\[ 2^{k_1+k_2}\cdot 2^{m + 2k+2k_2-3k_1} 2^{6(p_0-(k_2-k_1)^+)}\lesssim 2^{-2m+12\delta m}2^{-k}.
\]
We then consider the main term, which is now
\[|r\sigma|e^{it(\rho-r-|\rho-r|)}\int_{\mathbb{S}^2}e^{-\frac{2it\rho r}{\sigma}(1-\cos\nu)} \chi\big(2^{-p_0+(k_2-k_1)^+}\nu)\big)\,\mathrm{d}\phi.
\]
If one replaces the cutoff function $\chi\big(2^{-p_0+(k_2-k_1)^+}\nu)\big)$ by $\chi_1(\nu)$,
where $\chi_1$ is a fixed smooth cutoff function supported at $|\nu|\lesssim 1/100$ and equals $1$ for $|\nu|\lesssim 1/200$,
then by the same argument as in Lemma \ref{LemAIBP}, we can show that the difference introduced will be $O(2^{-100m})$,
since in the support of $\chi-\chi_1$ there is no critical point of the phase $1-\cos\nu$.
Therefore, below we will replace the cutoff function by $\chi_1$. Writing in spherical coordinates we get
\[
\int_{\mathbb{S}^2}e^{-\frac{2it\rho r}{\sigma}(1-\cos\nu)} \chi_1(\nu)\,\mathrm{d}\phi
  = 2\pi\int_{0}^\pi  e^{-\frac{2it\rho r}{\sigma}(1-\cos\nu)} \sin(\nu)\chi_1(\nu)\,\mathrm{d}\nu
  = 2\pi\int_{-1}^1e^{-\frac{2it\rho r}{\sigma}(1-\lambda)}\chi_2(\lambda)\,\mathrm{d}\lambda,
\]
where $\lambda=\cos\nu$, and $\chi_2$ is supported in $|\lambda-1|\leq 1/10$ and equals $1$ for $|\lambda-1|\leq 1/200$.
Integrating by parts in $\lambda$ and noticing that the boundary term at $\lambda=-1$ vanishes, we obtain that
\[
\int_{-1}^1e^{-\frac{2it\rho r}{\sigma}(1-\lambda)}\chi_2(\lambda)\,\mathrm{d}\lambda=\frac{\sigma}{2it\rho r}+O(2^{-100m}),
\]
noticing also that $(t\rho r)/\sigma\gtrsim 2^{m/8}$. Summing up, we get that
\[I(\theta;t,r,\rho)=\frac{\pi}{it} e^{it(\rho-r-|\rho-r|)}\cdot \frac{(\rho-r)^2}{\rho} +O(2^{-k}2^{-2m+12\delta m}). \qedhere
\]

\end{proof}

\bigskip
{\bf Step 4}: {\it Conclusion}.
We still need to control the error terms coming from Lemma \ref{LemItheta}, which is of form $R$ where
\[ 2^k|\Omega^{\ell}R|\lesssim 2^{-2m+12\delta m} \int_{|r|\approx 2^{k_2},|\rho-r|\approx 2^{k_1}}|
  \what{g_1}(t,(\rho-r)\theta)| \cdot |\what{g_2}(t,(\rho-r)\theta)| \,\mathrm{d}r.
\]
Now we may assume $k_1\geq k_2$; if $k_2\leq -m/2$ the above is bounded by
\[2^{-(2-12\delta)m} 2^{-k_1} 2^{2\delta'm},
\]
which suffices since $k_1\geq -7m/8-10$. If $k_2\geq-m/2$ then we have the bound
\[2^{-(2-12\delta)m}\cdot 2^{m/2}2^{\delta'm}\cdot 2^{\delta'm},
\]
which also suffices.
Moreover, in the integral $J_{++}^{\|}$, if $r\geq \rho$, then by Lemma \ref{LemItheta}, up to acceptable error terms, this integral is of form
\[
\int_{\rho}^{+\infty}e^{2it(\rho-r)}|\rho-r| r \, \varphi_{\leq 5}(r/\rho)  \what{f^1}(t,(\rho-r)\theta) \what{f^1}(t,r\theta\big)\,  \mathrm{d}r,
\]
which decays like $2^{-100m}$ since we can integrate by parts in $r$, using the fact that $\max(j_1,j_2)\leq (1-4\delta)m$
and that there is no boundary term since $\what{f^1}$ is supported on $|\rho-r|\gtrsim 2^{-7m/8}$.
Thanks to this we can restrict the $r$ integral in $J_{++}^{\|}$ to $r\in[0,\rho]$,
and similarly restrict the integral in $J_{+-}^{\|}$ to $r\in(-\infty,0]$ and the integral in $J_{-+}^{\|}$ to $r\in[\rho,+\infty)$.
In conclusion, we see from from the formulas \eqref{Jparpol3}, with \eqref{Itheta}-\eqref{Ftheta},
and using \eqref{Ithetaexp}, and the apriori bound \eqref{aprioriE}, that
\begin{align}
\label{Jparpol3}
\begin{split}
J_{++}^{\parallel} & = \frac{\pi}{it} \int_{0}^\rho \frac{(\rho-r)^2}{\rho} \varphi_{\leq 5}(r/\rho)
  \, F_1^\theta(t,\rho-r) F_2^\theta(t,r)\, \mathrm{d}r + \mathcal{R},
\\
J_{+-}^{\parallel} & = \frac{\pi}{it} \int_{-\infty}^0 \frac{(\rho-r)^2}{\rho} \, F_1^\theta(t,\rho-r) \overline{F_2^\theta(t,-r)}\, \mathrm{d}r	
  + \mathcal{R},
\\
J_{-+}^{\parallel} & = \frac{\pi}{it}  \int_{\rho}^\infty \frac{(\rho-r)^2}{\rho} \, \varphi_{\leq 5}(\rho/r) \,
  \overline{F_1^\theta(t,r-\rho)} F_2^\theta(t,r)\, \mathrm{d}r + \mathcal{R}.
\end{split}
\end{align}
where $\mathcal{R}$ denotes the usual acceptable remainder terms.

We then extend the profile $F^\theta$ in \eqref{Ftheta} to negative arguments by letting
\begin{align}
\label{Gtheta}
\begin{split}
G_{n}^\theta(t,x) & := F_{n}^\theta(t,x), \qquad x>0,
\\
G_{n}^\theta(t,x) & := \overline{F_{n}^\theta(t,-x)}, \qquad x<0, \qquad n=1,2.
\end{split}
\end{align}
With this definition, recalling the formula \eqref{Pr2} and the notation $I_{\kappa_1\kappa_2} = 4(2\pi)^{3/2}I_{\kappa_1\kappa_2}^{m,k,k_1,k_2}$,
and putting together \eqref{I-Jpar} and \eqref{Jparpol3}, we see that
\begin{align}
I_{++}^{m,k,k_1,k_2} - I_{+-}^{m,k,k_1,k_2} - I_{-+}^{m,k,k_1,k_2}
  - \frac{1}{4it\sqrt{2\pi}} \int_{-\infty}^\infty \frac{(\rho-r)^2}{\rho} G^\theta_1(\rho-r) G^\theta_2(r) \, \mathrm{d}r \in \mathcal{R}.
\end{align}
which implies the desired conclusion. \qed

\subsection{Proof of Lemma \ref{LemAux}}\label{secLemAux}
Let $L_{\theta}(t,r)=\varphi_{\leq-10}(r\langle t\rangle^{7/8})F_{\theta}(t,r)$, we decompose
\[\frac{i}{4\sqrt{2}\pi t}\int_\R \frac{(\rho-r)^2}{\rho} F_\theta(t,\rho-r)F_\theta(t,r) \, \mathrm{d}r=\mathcal{N}(t,\rho)+\frac{i}{4\sqrt{2}\pi t}\big(A(t,\rho)+B(t,\rho)+C(t,\rho)\big),
\]
where
\begin{align*}
A(t,\rho) &:=\int_\R \frac{(\rho-r)^2}{\rho} L_\theta(t,\rho-r)G_\theta(t,r) \, \mathrm{d}r,
\\
B(t,\rho) &:=\int_\R \rho F_\theta(t,\rho)L_\theta(t,r) \, \mathrm{d}r,
\\
C(t,\rho) &:=\int_\R \bigg(\frac{(\rho-r)^2}{\rho} F_\theta(t,\rho-r)-\rho F_\theta(t,\rho)\bigg)L_\theta(t,r) \, \mathrm{d}r.
\end{align*}
Let $|\rho|\approx 2^k$, $|\rho-r|\approx 2^{k_1}$ and $|r|\approx 2^{k_2}$; estimating $A$ using Lemma \ref{lemImp}, we get
\[ 2^k2^{20k^+}\sup_{|\alpha|\leq N_1}|\Omega^{\alpha}A(t,\rho)|
  \lesssim \int_{|\rho -r|\lesssim 2^{-7m/8}\lesssim|r|}\varepsilon^22^{m/5}|\rho-r|^2|\rho-r|^{-1}|r|^{-1}\,\mathrm{d}r\lesssim\varepsilon^{2}2^{-m/2},
\]
so $t^{-1}A(t,\rho)\in\mathcal{R}$. By similar estimates, and using \eqref{LemL1.3}, we can show that
\[t^{-1}\bigg(B(t,\rho)+\frac{1}{8(2\pi)^{3/2}}\rho F_\theta(\rho)\cdot\int_{\R} \varphi_{\leq-10}(r\langle t\rangle^{7/8}) H_\theta(t,r)\,\mathrm{d}r\bigg)\in\mathcal{R}.
\]

Finally, for $C(t,\rho)$ we can decompose
\begin{align*}
\begin{split}
t^{-1}C(t,\rho) & = t^{-1} \, \int_\R \bigg(\frac{(\rho-r)^2}{\rho}-\rho\bigg) F_\theta(t,\rho)L_\theta(t,r) \, \mathrm{d}r
\\
& + t^{-1} \, \int_\R \frac{(\rho-r)^2}{\rho}  (F_\theta(t,\rho-r)-F_\theta(t,\rho))L_\theta(t,r) \, \mathrm{d}r.
\end{split}
\end{align*}
The first term can be shown to be in $\mathcal{R}$ in the same way as above.
For the second term, as $|r|\lesssim 2^{-7m/8}$, we may assume $|\rho-r|\gg 2^{-7m/8}$ ,
for otherwise the desired bound follows from the smallness of $(\rho-r)^2$;
then, decomposing $f$ into $Q_{jk}f$ (and $F$ accordingly) as in Section \ref{secsetup},
and using \eqref{aprioriLinfty} and the mean value Theorem, we get that
\[
\sup_{|\alpha|\leq N_1}|\Omega^{\alpha}(F_\theta(t,\rho-r)-F_\theta(t,\rho))|\lesssim \sup_{j}\min(2^{-7m/8+j/2-2k},2^{-j/2-2k})\lesssim 2^{-7m/16-2k},
\]
so the contribution of this part to the $X$ norm of $t^{-1}C(t,\rho)$ is bounded by
\[
2^{-m}2^{2k}\cdot 2^{-7m/16-2k} 2^{\delta'm} \cdot\int_{|r|\lesssim 2^{-7m/8}}\min(|r|^{-1},2^m)\,\mathrm{d}r\lesssim 2^{-4m/3}.
\]
This gives that $C(t,\rho)\in\mathcal{R}$, which completes the proof. \qed

\bigskip
\section{Proof of Theorem \ref{Mainth2}: Nonlinear Asymptotics}\label{SecProof2}
We start by proving an upper bound for the correction term $B_\theta(t)$ defined in \eqref{Mainth13}.
Recall that the vector fields in $\Omega$ are equivalent to $\partial_{\theta}$.
\begin{proposition}\label{b_bound}
We have
\begin{equation} \label{b_bound0}
\sup_{|\alpha|\leq 2N_1}|\Omega^{\alpha}H_\theta(t,\rho)|
  \lesssim \e \varphi_{\leq-10}(\rho\langle t \rangle^{7/8})\rho^{-1}(1+|t|)^{C\e},
  \quad \sup_{|\alpha|\leq N_1} |\Omega^{\alpha}B_\theta(t)|\lesssim \e (1+|t|)^{-1+C\e}.
\end{equation}
\end{proposition}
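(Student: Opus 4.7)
The plan is to extract a power of $1/(s\rho)$ from $h_\theta(s,\rho)$ by stationary phase in $\phi$ on $\mathbb{S}^2$: the function $1-\theta\cdot\phi$ has its two critical points on the sphere at $\phi=\pm\theta$ (with values $0$ and $2$). Introducing spherical coordinates based at $\theta$, $\phi=(\sin\nu\cos\omega,\sin\nu\sin\omega,\cos\nu)$, and the change of variables $u=1-\cos\nu\in[0,2]$, one has
\[
h_\theta(s,\rho)=\int_\R r^2\int_0^2 e^{is\rho u}\tilde G(s,r,u)\,du\,dr,\qquad \tilde G(s,r,u):=\int_0^{2\pi}|F_\phi(s,r)|^2\,d\omega.
\]
The key observation is that $\partial_u\tilde G=(\sin\nu)^{-1}\partial_\nu\tilde G$ extends continuously across the poles: by smoothness of $|F_\phi|^2$ on $\mathbb{S}^2$ and the $\omega$-averaging one has $\partial_\nu\tilde G=O(\nu)$ near $\nu=0$ and $\partial_\nu\tilde G=O(\pi-\nu)$ near $\nu=\pi$, so the apparent singularities cancel. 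One integration by parts in $u$ gives
\[
h_\theta(s,\rho)=\frac{2\pi}{is\rho}\int_\R r^2\bigl[e^{2is\rho}|F_{-\theta}(s,r)|^2-|F_\theta(s,r)|^2\bigr]dr-\frac{1}{is\rho}\int_\R r^2\!\int_0^2 e^{is\rho u}\partial_u\tilde G\,du\,dr.
\]

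Using \eqref{fhatimpcor2} with $a_k:=2^{15k^+}2^k\|\widehat{P_kf}\|_{L^\infty}$, so that $\sup_k a_k\leq\sum_k a_k\lesssim\e s^{\delta'}$, one bounds each $\int r^2|F_{\pm\theta}(s,r)|^2dr$ by $(\sup_k a_k)^2\sum_k 2^{k-30k^+}\lesssim\e^2 s^{2\delta'}$, and the IBP remainder is controlled analogously via angular Sobolev bounds on $\partial_u\tilde G$ together with \eqref{Nrg}. Combined with the trivial bound $|h_\theta(s,\rho)|\le\|\hat f(s)\|_{L^2}^2\lesssim\e^2 s^{C\e}$, this yields
\[
|h_\theta(s,\rho)|\lesssim\e^2 s^{C\e}\min\bigl(1,(s\rho)^{-1}\bigr).
\]
Integrating in $s\in[0,t]$, splitting at $s=\rho^{-1}$, and using $\rho t\le 1$ on the support of $\varphi_{\le -10}(\rho\langle t\rangle^{7/8})$ (so that in particular $\e^2 t^{1+C\e}\leq\e^2\rho^{-1}t^{C\e}$), one obtains $|H_\theta(t,\rho)|\lesssim\e^2\rho^{-1}t^{C\e}$. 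For $\Omega_\theta^\alpha$ with $|\alpha|\le 2N_1$, a rotation $\phi\mapsto R\phi$ shifts the $\theta$-derivatives off the phase and onto $|F_\phi|^2$, producing sums of $(\Omega^{\alpha_1}\hat f)(r\phi)\,\overline{(\Omega^{\alpha_2}\hat f)(r\phi)}$ with $|\alpha_1|+|\alpha_2|\le 2N_1\le N$, so the same estimates run unchanged via \eqref{Nrg} and \eqref{fhatimpcor2} applied to $\Omega^{\alpha_i}\hat f$.

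For $B_\theta(t)$ one applies the identical stationary-phase reduction to its defining integral, with $H_\phi(t,r)$ in place of $|F_\phi(s,r)|^2$ and $r$ in place of $r^2$. The main term becomes $\tfrac{1}{16\pi t}\,\mathrm{Im}\int_\R\bigl[H_\theta(t,r)-e^{2itr}H_{-\theta}(t,r)\bigr]dr$ plus an IBP remainder in $u$. The sharpening $|H_\theta(t,r)|\lesssim\e^2 t^{C\e}\min(t,r^{-1})$ on $|r|\lesssim t^{-7/8}$ (the $t$-bound coming from the trivial estimate of $h_\theta$ when $sr\lesssim 1$) gives, upon splitting $\int_\R|H_\theta(t,r)|dr$ at $|r|=t^{-1}$, a logarithmic factor $\log(t^{1/8})$ that is absorbed into $t^{C\e}$; hence $|B_\theta(t)|\lesssim\e^2(1+t)^{-1+C\e}$, and $\Omega_\theta^\alpha$ derivatives for $|\alpha|\le N_1$ follow from the same rotation trick applied now to $H_\phi$. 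The main obstacle is the careful bookkeeping of the $\partial_u\tilde G$ IBP remainder and its $\Omega^\alpha$-derivatives: one must verify that these are genuinely bounded near the poles $\phi=\pm\theta$, and that the resulting angular Sobolev norms on $\mathbb{S}^2$ interface cleanly with the frequency-localized bounds from Section \ref{secImp}.
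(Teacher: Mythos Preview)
Your approach is correct and closely related to the paper's, but the implementation differs. Both arguments extract one power of $(s\rho)^{-1}$ from the spherical oscillatory integral in $h_\theta$; the paper does this by repeated angular integration by parts (its Lemma~\ref{lemIBP0}) to confine $\phi$ to a cap of angular radius $\sim(1+s\rho)^{-1/2}2^{C\e m}$ around $\pm\theta$ and then exploits the small cap measure together with the $L^\infty$ bounds \eqref{aprioriLinfty2}. You instead pass to the height coordinate $u=1-\cos\nu$, average over the azimuth $\omega$, and perform a single explicit integration by parts in $u$, producing boundary contributions at the two poles plus a $\partial_u\tilde G$ remainder. Your route is more elementary and self-contained; the paper's is more uniform in that it avoids the pole analysis and plugs directly into its existing angular-IBP machinery. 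For $B_\theta$ the two arguments parallel each other in the same way.

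Two points deserve tightening. First, the assertion ``$\rho t\le 1$ on the support of $\varphi_{\le-10}(\rho\langle t\rangle^{7/8})$'' is false as stated: on that support one only has $\rho t\lesssim t^{1/8}$. The argument survives nonetheless: when $\rho^{-1}\le t$ the time integral gives $\int_{\rho^{-1}}^t (s\rho)^{-1}s^{C\e}\,ds\lesssim \rho^{-1}t^{C\e}\log t$, and the logarithm is absorbed into $t^{C\e}$; when $\rho^{-1}>t$ one genuinely has $\rho t<1$ and the trivial bound on $h_\theta$ suffices. Second, for $|\alpha|\le 2N_1=900$ the distribution $\Omega^\alpha\mapsto\Omega^{\alpha_1}\otimes\Omega^{\alpha_2}$ may place more than $N/3+2$ derivatives on one factor, in which case \eqref{fhatimpcor2} is unavailable for that factor. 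You then need a mixed $L^2$--$L^\infty$ estimate: put the high-derivative factor in the one-dimensional $L^2_r$ norm along the ray $r\mapsto r\theta$, controlled via angular Sobolev (two extra $\Omega$'s, which you can afford since $2N_1+2<N$) by the three-dimensional $L^2$ bound \eqref{Nrg}. The same remark applies to your $\partial_u\tilde G$ remainder, where the pole regularity actually costs $C^2$ control on the sphere, i.e.\ two additional angular derivatives rather than one.
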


\begin{proof}
Assume $|t|\gtrsim 1$. Recall that
\begin{align*}
H_\theta(t,\rho) & :=  \varphi_{\leq-10}(\rho\langle t \rangle^{7/8}) \int_0^t \int_\R \int_{\mathbb{S}^2}
  e^{is \rho[1-\theta \cdot \phi]} \big| \widehat{f}(t,r\phi) \big|^2 r^2 \,\mathrm{d}\phi \mathrm{d}r \mathrm{d}s,
\\
B_\theta(t) & := \frac{1}{32\pi^2}
  \mathrm{Re} \Big[ \int_\R \int_{\mathbb{S}^2} 
  e^{itr[1-\theta \cdot \phi]} H_\phi(t,r) \,r \,\mathrm{d}\phi \mathrm{d}r \Big]; 
\end{align*}
we then have, for $|\alpha|\leq 2N_1$, that
\[\Omega^{\alpha}H_\theta(t,\rho)=\sum_{\alpha_1+\alpha_2=\alpha}  \varphi_{\leq-10}(\rho\langle t \rangle^{7/8}) \int_0^t \int_\R \int_{\mathbb{S}^2}
  e^{is \rho[1-\theta \cdot \phi]} \Omega^{\alpha_1}\widehat{f}(t,r\phi)\cdot\overline{\Omega^{\alpha_2}\widehat{f}(t,r\phi)} r^2 \,\mathrm{d}\phi \mathrm{d}r \mathrm{d}s.
\] 
Let $|t|\approx 2^m$, by using (\ref{Nrg}), Lemma \ref{lemIBP0} and integrating by parts in $\phi$, 
we see that the above integral can be restricted to the region $|\sin\angle(\theta,\phi)|\lesssim (1+s\rho)^{-1/2}2^{C\varepsilon m}$; using also the $L^{\infty}$ bounds (\ref{aprioriLinfty2}), we estimate
\begin{multline*}
|\Omega^{\alpha}H_\theta(t,\rho)|\lesssim \varepsilon\varphi_{\leq-10}(\rho\langle t \rangle^{7/8})2^{C\varepsilon m}\int_0^t(1+s\rho)^{-1}\,\mathrm{d}s\cdot\int_{\mathbb{R}}\langle r\rangle^{-5}r^2\min(r^{-3/2},2^m)^2\,\mathrm{d}r\\\lesssim\varepsilon\varphi_{\leq-10}(\rho\langle t \rangle^{7/8})\rho^{-1}2^{C\varepsilon m}.
\end{multline*} 
Similarly, we have
\[\Omega^\alpha B_\theta(t) = \frac{1}{32\pi^2}
  \mathrm{Re} \Big[ \int_\R \int_{\mathbb{S}^2} 
  e^{itr[1-\theta \cdot \phi]} \Omega^{\alpha}H_\phi(t,r)r\,\mathrm{d}\phi\mathrm{d}r\bigg],
\]
so using the bounds for $H_\phi(t,r)$ just proved, and Lemma \ref{lemIBP0} and integrating by parts in $\phi$, 
we can again restrict the integral to the region $|\sin\angle(\theta,\phi)|\lesssim (1+tr)^{-1/2}2^{C\varepsilon m}$, and hence obtain
\[|\Omega^\alpha B_\theta(t)|\lesssim\varepsilon 2^{C\varepsilon m}\int_{|r|\lesssim 2^{-7m/8}}(1+tr)^{-1}\,\mathrm{d}r\lesssim \varepsilon t^{-1+C\varepsilon},
\] 
which completes the proof.
\end{proof}

Now, using the same arguments as in Proposition \ref{b_bound}, we can show
\begin{equation}
\label{boundh}
\sup_{|\alpha|\leq N_1}|\Omega^{\alpha}h_\theta(t,\rho)|\lesssim \rho^{-1}(1+|t|)^{-1+C\varepsilon};
\qquad \sup_{|\alpha|\leq N_1}|\Omega^{\alpha}C_\theta(t)|\lesssim (1+|t|)^{-1+C\varepsilon},
\end{equation} 
see the definitions \eqref{Mainth12}-\eqref{Mainth13}.
By Theorem \ref{Mainth1}, we have
\begin{align}
\label{asymptpde2}
\begin{split}
\partial_t F_\theta(t,\rho) =-i\rho C_\theta(t) F_\theta(t,\rho)+ \frac{1}{it}\frac{1}{4(2\pi)^{1/2}}
  \int_\R \frac{(\rho-r)^2}{\rho} F_\theta(t,\rho-r)F_\theta(t,r) \, \mathrm{d}r
  \\
  + \frac{1}{2(2\pi)^{3/2}}\varphi_{\leq0}(\rho\langle t \rangle^{7/8})\cdot h_{\theta}(t,\rho)+ \mathcal{R}(t,\xi).
\end{split}
\end{align}
Let $U_\theta=U_\theta(s,q)$ be defined by
\begin{equation}
\label{defU}
\begin{split}
(\mathcal{F}_qU_\theta)(s,\rho) & := \frac{1}{4(2\pi)^{1/2}} e^{-i\rho D_\theta(e^s)}F_\theta(e^s,\rho)
\\
U_\theta(s,q) & := \frac{1}{4(2\pi)^{1/2}} (\mathcal{F}_{\rho}^{-1}F_\theta)(e^s,q+D_\theta(e^s)),
\end{split}
\end{equation}
where
\begin{equation}
\label{defU'}
D_\theta(t) := \int_0^t C_\theta (t')\,\mathrm{d}t'.
\end{equation}
We then calculate, using (\ref{asymptpde2}), that
\begin{equation}
\label{eqnU}
\partial_s\partial_qU_\theta(s,q)=-U_\theta(s,q)\cdot\partial_q^2U_\theta(s,q)+\mathcal{E}_\theta(s,q),
\end{equation} 
where
\begin{equation}
\mathcal{F}_q\mathcal{E}_\theta(s,\rho) = \frac{1}{4(2\pi)^{1/2}} e^se^{-i\rho D_\theta(e^s)}
  \bigg[\frac{1}{2(2\pi)^{3/2}}\varphi_{\leq0}(\rho\langle e^s\rangle^{7/8})\cdot h_{\theta}(e^s,\rho) + \mathcal{R}(e^s,\xi)\bigg].
\end{equation}
Using the definition of $X$ norm, the assumption about the error term $\mathcal{R}$, and the bound (\ref{boundh}) on $h_\theta$, we obtain that
\begin{equation}
\label{bderror}\mathcal{E}=\mathcal{E}_1+\partial_s\mathcal{E}_2,
  \quad \sup_{|\alpha|\leq 15,|\beta|\leq N_1}\|\nabla^{\alpha}\Omega^{\beta}\mathcal{E}_j(s)\|_{L_q^2\cap L_q^\infty}\lesssim \varepsilon^2 e^{-\gamma s}
\end{equation}
for $j\in\{1,2\}$.

Using (\ref{eqnU}), we can obtain the asymptotic behavior of $U$ as follows.
\begin{proposition}\label{finaldata}There exists a function $\widetilde{U}=\widetilde{U}_\theta(s,q)$, satisfying \[\sup_{|\alpha|\leq 14,|\beta|\leq N_1}\|\nabla^{\alpha}\Omega^{\beta}\partial_q\widetilde{U}_\theta(s)\|_{L_q^2\cap L_q^\infty}\lesssim\varepsilon e^{C\varepsilon s},\] and the equation
\[\partial_s\partial_q\widetilde{U}_\theta+\widetilde{U}_\theta\cdot\partial_q^2\widetilde{U}_\theta=0,
\] such that we have
\begin{equation}\label{diffest}\sup_{|\alpha|\leq 14,|\beta|\leq N_1}\|\nabla^{\alpha}\Omega^{\beta}\partial_q(\widetilde{U}_\theta(s,q)-U_\theta(s,q))\|_{L_q^2\cap L_q^\infty(|q|\lesssim e^{\gamma s/10})}\lesssim\varepsilon^2 e^{-\gamma s/20}.
\end{equation}
\end{proposition}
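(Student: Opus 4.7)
The plan is to construct $\widetilde{U}_\theta$ as the exact solution of the unperturbed asymptotic PDE with ``asymptotic data'' obtained by scattering $V_\theta := \partial_q U_\theta$ along its own characteristic flow as $s\to\infty$, and then to verify \eqref{diffest} by comparing the characteristics of $U_\theta$ and $\widetilde U_\theta$ at a common point $(s,q)$ inside the window. The first ingredient is translating the $X$-norm bounds on $\widehat f$ from Sections~\ref{secPre}--\ref{SecProof1} through \eqref{defU}--\eqref{defU'} and the estimates \eqref{boundh} on $C_\theta$ to get
\begin{equation*}
\sup_{|\alpha|\leq 15,\,|\beta|\leq N_1}\|\nabla^\alpha\Omega^\beta \partial_q U_\theta(s)\|_{L^2_q\cap L^\infty_q}\lesssim \varepsilon e^{C\varepsilon s},
\end{equation*}
together with a matching pointwise bound for $U_\theta$ itself on $\{|q|\lesssim e^{\gamma s/10}\}$. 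The characteristic flow $\dot q=U_\theta(s,q)$ is then well defined; denote it by $q_\theta(s;s_0,q_0)$, and note that $e^{-C\varepsilon s}\lesssim|\partial_{q_0}q_\theta|\lesssim e^{C\varepsilon s}$ by Gronwall.

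Next, differentiating \eqref{eqnU} in $q$ gives $\partial_sV_\theta+U_\theta\partial_qV_\theta=\mathcal{E}_\theta$, hence along each characteristic
\begin{equation*}
\tfrac{d}{ds}V_\theta(s,q_\theta(s;0,q_0))=\mathcal{E}_1(s,q_\theta(s))+\partial_s\mathcal{E}_2(s,q_\theta(s)).
\end{equation*}
Rewriting $\partial_s\mathcal{E}_2(s,q_\theta(s))=\tfrac{d}{ds}[\mathcal{E}_2(s,q_\theta(s))]-U_\theta\,\partial_q\mathcal{E}_2(s,q_\theta(s))$ and using \eqref{bderror}, the right hand side is absolutely integrable in $s$, so the limit
\begin{equation*}
\psi_\theta(q_0):=\lim_{s\to\infty}V_\theta(s,q_\theta(s;0,q_0))
\end{equation*}
exists, with the quantitative Cauchy rate $|V_\theta(s,q_\theta(s;0,q_0))-\psi_\theta(q_0)|\lesssim \varepsilon^2 e^{-(\gamma-C\varepsilon)s}$. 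Commuting $\nabla^\alpha\Omega^\beta$ through the transport equation, and using the Gronwall bounds on $\partial_{q_0}q_\theta$ to pass between the label $q_0$ and the spatial variable $q$, transfers this scattering statement to all derivatives of interest.

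The construction is then completed by defining $\widetilde U_\theta$ via the explicit Lindblad--Alinhac formula \eqref{asyPDEsol}--\eqref{asyPDEsol2} with data $\psi_\theta$. Then $\widetilde U_\theta$ solves the unperturbed asymptotic PDE by construction, and $\partial_q\widetilde U_\theta$ is constant equal to $\psi_\theta(\alpha)$ along the characteristic $s\mapsto z(s,\alpha)$, so the bounds on $\psi_\theta$ from the scattering step imply the claimed estimate on $\nabla^\alpha\Omega^\beta\partial_q\widetilde U_\theta$. To prove \eqref{diffest} I compare the two characteristics emanating from the same $q_0$ at $s=0$: their difference is driven by $U_\theta-\widetilde U_\theta$ and is controlled by Gronwall to be of size $\varepsilon^2 e^{-(\gamma-C\varepsilon)s/2}$ while both characteristics stay in the window. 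Composing this with the uniform Lipschitz bound on $\psi_\theta$ and with the scattering rate, and expressing both $\partial_q U_\theta$ and $\partial_q\widetilde U_\theta$ at the same spatial point $(s,q)$ via their respective characteristics, yields \eqref{diffest}; one loses a factor $e^{\gamma s/10}$ from the window $|q|\lesssim e^{\gamma s/10}$ but the net decay $e^{-\gamma s/20}$ survives.

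The main obstacle is the tight interplay between the $e^{C\varepsilon s}$ growth of the vector-field--weighted bounds, the $e^{-\gamma s}$ decay of the error in \eqref{bderror}, and the exponentially expanding window $|q|\lesssim e^{\gamma s/10}$; with $\gamma=1/90$ and $\varepsilon\ll 1$ (see \eqref{param}) the exponents balance favorably, but the argument must be run simultaneously for $|\alpha|\leq 14$ and $|\beta|\leq N_1$, and each additional derivative produces commutators of the form $(\nabla^{\alpha'}\Omega^{\beta'}U_\theta)\nabla^{\alpha''}V_\theta$ that must be absorbed into an $e^{C\varepsilon s}$ factor. The $L^2$ part of \eqref{diffest} is the most delicate because the change of variables from the label $q_0$ to the spatial coordinate $q$ in the scattering identity requires the Jacobian $\partial_{q_0}q_\theta$, whose bound contributes the same $e^{C\varepsilon s}$, forcing a careful accounting to ensure $C\varepsilon<\gamma/10$ at every step.
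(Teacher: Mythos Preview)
Your overall strategy---pass to characteristics of $U_\theta$, scatter $V_\theta=\partial_qU_\theta$ along them to produce the asymptotic data $\psi_\theta=V_\infty$, and then build $\widetilde U_\theta$ as an exact solution of the unperturbed equation---is precisely the paper's approach. The gap is in the \emph{construction} of $\widetilde U_\theta$ and, as a consequence, in the Gronwall step you invoke to compare characteristics.

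Write $z(s,q_0)$ for the $U_\theta$-characteristic with $z(0,q_0)=q_0$. From $\partial_s\log\partial_{q_0}z=V_\theta(s,z)$ and your scattering statement one gets
\[
\log\partial_{q_0}z(s,q_0)=sV_\infty(q_0)+E_\infty(q_0)+\mathcal{O}(\varepsilon^2 e^{-\gamma s/6}),
\qquad E_\infty(q_0):=\int_0^\infty\big(V_\theta(s',z(s',q_0))-V_\infty(q_0)\big)\,ds'=\mathcal{O}(\varepsilon^2),
\]
and the point is that $E_\infty$ does \emph{not} decay in $s$. If you define $\widetilde U_\theta$ via the explicit formula \eqref{asyPDEsol}--\eqref{asyPDEsol2} with data $\psi_\theta=V_\infty$, its characteristic $\widetilde z$ (normalized at $s=0$) satisfies $\log\partial_{q_0}\widetilde z=sV_\infty(q_0)$ with no $E_\infty$ term. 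Hence
\[
\partial_{q_0}\big(z-\widetilde z\big)(s,q_0)=e^{sV_\infty(q_0)}\big(e^{E_\infty(q_0)}-1\big)+\mathcal{O}(\varepsilon^2 e^{-\gamma s/6})\cdot e^{C\varepsilon s}
\sim \varepsilon^2 e^{C\varepsilon s},
\]
which is $\mathcal{O}(\varepsilon^2)$ but carries \emph{no} decay in $s$. Integrating in the label over the window (size $\sim e^{\gamma s/9}$ in label space) gives $|z-\widetilde z|\sim \varepsilon^2 e^{\gamma s/9+C\varepsilon s}$, which grows. Equivalently, your Gronwall input $U_\theta-\widetilde U_\theta$ is only $\mathcal{O}(\varepsilon^2)$ (no decay), so $\delta(s)=z(s,q_0)-\widetilde z(s,q_0)$ satisfies at best $|\delta|\lesssim \varepsilon^2 s\,e^{C\varepsilon s}$, not $\varepsilon^2 e^{-(\gamma-C\varepsilon)s/2}$ as you claim. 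The loss then propagates to $V_\infty(q_0)-V_\infty(\alpha)$ and \eqref{diffest} fails.

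The paper repairs this in two linked moves. First, it builds $\widetilde z$ \emph{with} the correction: $\partial_q\widetilde z(s,q)=e^{sV_\infty(q)+E_\infty(q)}$, so that $\partial_q(z-\widetilde z)=\mathcal{O}(\varepsilon^2 e^{-\gamma s/6})\cdot e^{C\varepsilon s}$ genuinely decays. Second, instead of matching at $s=0$, it anchors in space at each time: setting $\widetilde z(s,q_0(s))=0$ where $q_0(s)$ is the label with $z(s,q_0(s))=0$ forces $z$ and $\widetilde z$ to agree at one point in the window for every $s$, so that the decaying $\partial_q(z-\widetilde z)$ can be integrated over a label interval of length $\lesssim e^{\gamma s/9}$ to give $|z-\widetilde z|\lesssim \varepsilon^2 e^{-\gamma s/6+\gamma s/9+C\varepsilon s}$, which is what produces \eqref{diffest}. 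Your proposal can be fixed by inserting exactly these two ingredients; without them the comparison of characteristics does not close.
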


\begin{proof} For simplicity, denote
\[\|G\|_{Y_a}:=\sup_{|\alpha|\leq a,|\beta|\leq N_1}\|\nabla^{\alpha}\Omega^{\beta}G(s)\|_{L_q^2\cap L_q^\infty}.
\]

We use the method of characteristics. Let $V_\theta(s,q)=(\partial_qU_\theta)(s,q)$ and $z_\theta=z_\theta(s,q)$ be defined such that
\[\partial_sz_\theta(s,q)=U_\theta(s,z_\theta(s,q)),\quad z_\theta(0,q)=q,
\] and make the bootstrap assumption
\begin{equation}\|\log\partial_qz_\theta(s,q)\|_{Y_{14}}\leq C\varepsilon s.\label{boot}
\end{equation}For simplicity we will omit the subscript $\theta$ below. Then we calculate that
\begin{multline*}\partial_s[V(s,z(s,q))]=[(\partial_{s}\partial_q+U\cdot\partial_q^2)U](s,z(s,q))=\mathcal{E}(s,z(s,q))\\=\mathcal{E}_1(s,z(s,q))+\partial_s[\mathcal{E}_2(s,z(s,q))]-(\partial_q\mathcal{E}_2)(s,z(s,q))\cdot U(s,z(s,q)).
\end{multline*} Using (\ref{bderror}) and the bounds for $U$ which follow from Lemma \ref{lemImp}, as well as the bootstrap assumption (\ref{boot}), we obtain that
\[
V(s,z(s,q))=V_{\infty}(q)+\mathcal{O}_{Y_{14}}(\varepsilon^2 e^{-\gamma s/6}),
\]
for some function $V_\infty\in Y_{14}$ with $\|V_\infty\|_{Y_{14}}\leq C_0\varepsilon$, where $C_0$ depends only on the initial data,
and $\mathcal{O}_{Y_a}(\varepsilon^2 e^{-\gamma s/6})$ denotes any function that is $O(\varepsilon^2 e^{-\gamma s/6})$ measured in the norm $Y_a$.

Next, we calculate 
\[\partial_s(\log\partial_qz(s,q))=V(s,z(s,q))=V_{\infty}(q)+\mathcal{O}_{Y_{14}}(\varepsilon^2 e^{-\gamma s/6}),\]
upon integrating in $s$ (and choosing $C$ large enough depending on $C_0$), we can recover the bootstrap assumption \eqref{boot}. 
Moreover, we have
\[
\log\partial_qz(s,q) = sV_\infty(q)+\int_0^s\mathcal{O}_{Y_{14}}(\varepsilon^2 e^{-\gamma s/6})\,\mathrm{d}s'.
\]
Define
\[E_\infty(q):=\int_0^{\infty}\mathcal{O}_{Y_{14}}(\varepsilon^2 e^{-\gamma s/6})\,\mathrm{d}s',\]
where the $\mathcal{O}_{Y_{14}}(\cdot)$ term is the same as above, we then obtain that
\[\log\partial_qz(s,q)=sV_\infty(q)+E_\infty(q)+\mathcal{O}_{Y_{14}}(\varepsilon^2 e^{-\gamma s/6}).
\]

We can now define the function $\widetilde{U}(s,q)$.
Let $q=q_0(s)$ be the unique point where $z(s,q_0(s))=0$, we let
\begin{align}
\begin{split}
& \partial_q\widetilde{z}(s,q) = e^{sV_\infty(q)+E_\infty(q)}, \qquad \widetilde{z}(s,q_0(s))=0
\\
& \widetilde{U}(s,\widetilde{z}(s,q))=\partial_s\widetilde{z}(s,q).
\end{split}
\end{align}
By calculations similar to the ones above we can check that
$\partial_s\partial_q\widetilde{U}+\widetilde{U}\cdot\partial_q^2\widetilde{U}=0$,
as claimed in the statement.

Finally, we let $\widetilde{V}:=\partial_q\widetilde{U}$ and need to control $\widetilde{V}(s,q)-V(s,q)$ as in \eqref{diffest}.
For $|q|\lesssim e^{\gamma s/10}$, we may replace $q$ by $z(s,q)$ and reduce to considering $\widetilde{V}(s,z(s,q))-V(s,z(s,q))$  for $|q-q_0(s)|\lesssim e^{\gamma s/9}$.
Moreover, since
\[V(s,z(s,q)) = V_\infty(q)+\mathcal{O}_{Y_{14}}(\varepsilon e^{-\gamma s/6}),\qquad \widetilde{V}(s,\widetilde{z}(s,q)) = V_\infty(q),
\]
we just need to control $\widetilde{V}(s,z(s,q))-\widetilde{V}(s,\widetilde{z}(s,q))$,
which is essentially bounded by $|\partial_q\widetilde{V}(s)|\cdot|z(s,q)-\widetilde{z}(s,q)|$.
Then, since $z(s,q_0(s)) = \widetilde{z}(s,q_0(s)) = 0$, $|q-q_0(s)|\lesssim e^{\gamma s/9}$, and
\begin{align*}
|\partial_qz-\partial_q\widetilde{z}|=e^{sV_\infty(q)}\cdot\big(e^{\mathcal{O}_{Y_{14}}(\varepsilon e^{-\gamma s})}-1\big)=\mathcal{O}_{Y_{14}}(\varepsilon^2e^{-\gamma s/6}),
\end{align*}
we end up obtaining \eqref{diffest}.
\end{proof}

To conclude the proof of Theorem \ref{Mainth2}, we need to transfer the asymptotics of $U$ to the asymptotics of $u$ (and $\partial u$), which is done in the following:

\begin{proposition}\label{transfer0}
Let $x=r\omega\in\mathbb{R}^3$, such that $\omega\in\mathbb{S}^2$ and $|r-t|\lesssim e^{\gamma s/10}$, where $t=e^s$. Then we have
\begin{equation}\label{transfer}
\sup_{|\alpha|\leq 10,|\beta|\leq N_1}\bigg|\nabla^{\alpha}\Omega^{\beta}\partial\bigg(u(t,x)-\frac{\omega_j}{2r}
  \partial_q\mathcal{F}^{-1}F_\omega(t,r-t)\bigg)\bigg|\lesssim \varepsilon t^{-1-\gamma/30}.
\end{equation}
\end{proposition}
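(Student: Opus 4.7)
The approach is a stationary phase analysis on the unit sphere, applied to the linear representation of $u(t,x)$ in terms of the Fourier profile $\widehat f(t,\xi)$. Using $u=-|\nabla|^{-1}\mathrm{Im}(v)$ and $v(t)=e^{-it|\nabla|}f(t)$, one has
\[
u(t,x)=-\mathrm{Im}\bigg[\frac{1}{(2\pi)^{3/2}}\int_{\R^3}\frac{1}{|\xi|}e^{ix\cdot\xi-it|\xi|}\widehat f(t,\xi)\,\mathrm{d}\xi\bigg].
\]
Writing $\xi=\rho\phi$ with $\phi\in\mathbb{S}^2$ and $x=r\omega$, the phase is $\rho(r\omega\cdot\phi-t)$. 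Since $|r-t|\lesssim t^{\g/10}\ll t$, this phase is small near $\phi=\omega$ and of order $-2t\rho$ near $\phi=-\omega$. The plan is to extract the leading contribution near $\phi=\omega$ via stationary phase, identify it with the one-dimensional Fourier integral $\mathcal{F}^{-1}F_\omega(t,r-t)$ appearing in the statement, and absorb the $\phi=-\omega$ contribution into the error via a nonstationary integration by parts in $\rho$.

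The core computation is the standard $\mathbb{S}^2$-stationary-phase expansion
\[
\int_{\mathbb{S}^2}e^{i\rho r\omega\cdot\phi}\widehat f(t,\rho\phi)\,\mathrm{d}\phi
=\frac{2\pi}{i\rho r}\Big[e^{i\rho r}F_\omega(t,\rho)-e^{-i\rho r}F_{-\omega}(t,\rho)\Big]+O\big((\rho r)^{-2}\big),
\]
the remainder being controlled by a bounded number of angular ($\Omega$) derivatives of $\widehat f$, available from \eqref{Nrg} and Lemma \ref{lemImp}. Substituting back, the $F_{-\omega}$ piece carries the phase $-(r+t)\rho\approx-2t\rho$ in $\rho$, so integrating by parts using the bounds on $\partial_\rho^{\leq N}F_{-\omega}$ inherited from \eqref{aprioriE} makes this contribution $O(t^{-N})$. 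The $F_\omega$ piece, after extending $\rho$ to $\R$ via the conjugation symmetry $F_\omega(t,-\rho)=\overline{F_\omega(t,\rho)}$ and undoing the outer $-\mathrm{Im}(\cdot/i)$, is identified with $\frac{1}{2r}\mathcal{F}^{-1}F_\omega(t,r-t)$. Differentiating in $x_j$ then yields the leading term $\frac{\omega_j}{2r}\partial_q\mathcal{F}^{-1}F_\omega(t,r-t)$; extra pieces from $\partial_j(1/r)$ and $\partial_j\omega$ are lower order by $1/r$. The higher-derivative version $\nabla^\alpha\Omega^\beta\partial$ is then obtained by commuting: $\Omega$ passes through as an angular derivative in $\omega$ on the right-hand side and in $\phi$ inside, while each $\nabla$ extracts a factor $\rho\omega$ (which becomes an additional $\partial_q$) plus smoother tangential corrections; the same expansion applies with inputs $\Omega^{\beta'}\widehat f$ or $|\xi|^{|\alpha|}\widehat f$, whose bounds come from Section \ref{secImp}.

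The main obstacle will be the uniform bookkeeping of decay across frequency regimes. In the stationary regime $\rho\gtrsim t^{-7/8}$, the $\mathbb{S}^2$-remainder is $(\rho r)^{-2}$ times bounded angular derivatives of $\widehat f$; after $\rho$-integration and the $1/|\xi|$ factor, this is of size $r^{-2}t^{C\e}$, which beats $t^{-1-\g/30}$ with substantial margin since $\g/30\gg C\e$ for $\e$ small. Very low frequencies $|\xi|\lesssim t^{-7/8}$, where the sphere is too small for stationary phase to give useful decay, must be treated separately using the $X$-norm bound \eqref{LemL1.3} on $\widehat f(t,\xi)-\widehat f(0,\xi)$, the trivial Schwartz contribution from $\widehat f(0,\xi)$, and integration by parts in $\rho$ for any remaining nonstationary-in-$\rho$ pieces; the small volume $\sim t^{-21/8}$ available in this region absorbs $t^{-1-\g/30}$ by a large margin. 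With these ingredients arranged along the same frequency thresholds used in Section \ref{SecProof1}, the proof reduces to pointwise bounds analogous to those already performed there.
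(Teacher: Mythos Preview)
Your overall strategy—stationary phase on $\mathbb{S}^2$ to extract the contributions near $\phi=\pm\omega$, then identify the $\phi=\omega$ piece with the one-dimensional Fourier integral—is the same as the paper's, and your handling of the main term is in the right spirit. (For the low-frequency region the paper does not need \eqref{LemL1.3}: a crude volume estimate using \eqref{aprioriLinfty3} already gives $\lesssim t^{-13/8}$ there.)

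There is, however, a genuine gap in your treatment of the $\phi=-\omega$ contribution. You assert that ``integrating by parts using the bounds on $\partial_\rho^{\leq N}F_{-\omega}$ inherited from \eqref{aprioriE} makes this contribution $O(t^{-N})$.'' But \eqref{aprioriE}, and the underlying energy estimate \eqref{Ali0}, control only a \emph{single} scaling vector field $S$—hence at most one $x$-weight on $f$, i.e.\ one $\xi$-derivative on $\widehat f$. Higher $\rho$-derivatives of $F_{-\omega}(t,\rho)=\widehat f(t,-\rho\omega)$ are simply not available from these a priori bounds; since the profile concentrates near $|x|\approx t$, they can in fact grow like powers of $t$. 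So repeated $\rho$-integration by parts on the full $F_{-\omega}$ cannot yield $O(t^{-N})$ as you claim.

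The paper fixes this not by $\rho$-IBP but by recognizing the $\phi=-\omega$ contribution as the value $\frac{\omega_j}{2r}\,\partial_q\mathcal{F}^{-1}F_{-\omega}(t,-r-t)$ at the large argument $-r-t\approx -2t$, and then decomposing $f=\sum Q_{jk}f$. For $j\leq m-10$ it is the \emph{spatial cutoff} built into $Q_{jk}$ (not the energy bound) that makes $\widehat{Q_{jk}f}$ smooth at scale $2^{-j}$ in $\xi$, so its one-dimensional inverse Fourier transform is negligible at $|q|\approx 2^m$. For $j\geq m-10$ one reads off smallness directly from the pointwise bound \eqref{aprioriLinfty} (which carries a factor $2^{-j/2}$) together with Hausdorff--Young, after also discarding $k\leq -m/10$ via \eqref{fhatimp}. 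This gives $|\partial_q\mathcal{F}^{-1}F_{-\omega}(t,-r-t)|\lesssim 2^{-m/10}$, which suffices. If you want to salvage your $\rho$-IBP idea you must first pass to the $Q_{jk}$ pieces—each of which then tolerates many $\rho$-derivatives at cost $2^{j}$ per derivative—which is effectively the same argument.
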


\begin{proof}
Let $|t|\approx 2^m$. We consider only the $\partial_j$ component in $\partial$, the others being similar. By definition, we can write
\begin{equation}
\begin{split}
\partial_ju(t,x) &=-\Im\int_{\R^3}\frac{i\xi_j}{|\xi|}e^{-it|\xi|+ix\cdot\xi}\widehat{f}(\xi)\,\mathrm{d}\xi
\\
&=\Re\int_{0}^\infty     e^{-it\rho}\rho^2\,\mathrm{d}\rho\int_{\mathbb{S}^2}\theta_je^{\rho r(\omega\cdot\theta)}F_\theta(t,\rho)\,\mathrm{d}\theta.
\end{split}
\end{equation}In the above integral, with fixed $\rho$, we can restrict to the region $|\sin\angle(\theta,\omega)|\lesssim (\rho r)^{-1/2}2^{C\varepsilon m}$, by integrating by parts in $\theta$ using (\ref{Nrg}) and Lemma \ref{lemIBP0}; moreover when $\theta$ is close to $\pm\omega$ we can replace $F_\theta(t,\rho)$ by $F_{\pm\omega}(t,\rho)$, where the control on the difference is given by
\[\sup_{|\alpha|\leq N_1}|\Omega^{\alpha}(F_{\theta}(t,\rho)-F_{\pm\omega}(t,\rho))|\lesssim (r\rho)^{-1/2}2^{C\varepsilon m}\sup_{|\alpha|\leq N_1+1}\|\Omega^{\alpha}F_\theta(t,\rho)\|_{L^\infty}
\] and Lemma \ref{lemImp}.
By a standard calculation on oscillatory integrals (see for example Lemma \ref{LemItheta}), we then obtain that
\[\int_{\mathbb{S}^2}\theta_je^{\rho r(\omega\cdot\theta)}F_\theta(t,\rho)\,\mathrm{d}\theta=\frac{i}{\rho r}\bigg(\omega_je^{-i\rho r}F_{-\omega}(\rho)+\omega_je^{i\rho r}F_{\omega}(\rho)\bigg)+(\mathrm{error}_1),
\] where the error term satisfies estimates that are at least $2^{-m/10}$ better than the main term, namely
\[|(\mathrm{error}_1)|\lesssim \varepsilon2^{-m/10} \min\big(1,\frac{1}{|\rho r|}\big)\cdot\min(2^m,|\rho|^{-1}),
\]together with the corresponding bounds with vector fields $\Omega$. Therefore we obtain (using also the assumption $F_\theta(t,-\rho)=\overline{F_\theta}(t,\rho)$)
\begin{equation}
\begin{split}\partial_ju(t,x)&=\frac{\omega_j}{r}\Im\int_0^{\infty}\rho e^{i\rho(r-t)}F_\omega(t,\rho)\,\mathrm{d}\rho+\frac{\omega_j}{r}\Im\int_0^\infty\rho e^{i\rho(-r-t)}F_{-\omega}(t,\rho)\,\mathrm{d}\rho+(\mathrm{error}_2)\\
&=\frac{\omega_j}{2r}(\partial_q\mathcal{F}^{-1}F_\omega(t,r-t)+\partial_q\mathcal{F}^{-1}F_{-\omega}(t,-r-t))+(\mathrm{error}_2),
\end{split}
\end{equation} where the error term satisfies
\[|(\mathrm{error}_2)|\lesssim \varepsilon2^{-m-m/10},
\]
and is thus bounded by the right hand side of (\ref{transfer}) (note also that $|r|\approx 2^m$).
The corresponding bounds with several applications of $\nabla$ and $\Omega$ can be proved in the same way.

It remains to bound $|\partial_q\mathcal{F}^{-1}F_{-\omega}(t,-r-t)|$; 
for this we recall that $F_\theta(t,\rho)=\widehat{f}(t,\rho\theta)$, and decompose $f$ into $Q_{jk}f$. 
Since $|-r-t|\approx 2^m$ we may assume $j\geq m-10$; using also the Hausdorff-Young inequality and \eqref{fhatimp}
we may assume $k\geq -m/10$. Then, using the bound \eqref{aprioriLinfty} 
and Hausdorff-Young again we deduce that \[|\partial_q\mathcal{F}^{-1}F_{-\omega}(t,-r-t)|\lesssim 2^{-m/10},\] which gives what we need.
The corresponding bounds with vector fields easily follow as before.
\end{proof}

\begin{proof}[Proof of Theorem \ref{Mainth2}]
We may assume $|t|\gg 1$. 
The asymptotic description \eqref{Mainth24} for $||x|-t|\lesssim |t|^{\gamma/10}$ 
follows from combining Propositions \ref{finaldata} and \ref{transfer0}, and the relationship \eqref{defU}-\eqref{defU'} 
between $F_{\theta}$ and $U_{\theta}$.
\end{proof}


\medskip


\begin{thebibliography}{99}

\bibitem{ADa}
T. Alazard and J.M. Delort.
\newblock Global solutions and asymptotic behavior for two dimensional gravity water waves.
\newblock {\em Ann. Sci. \'Ec. Norm. Sup\'er.} 48 (2015), no. 5, 1149-1238.


\bibitem{Alibook}
S. Alinhac.
\newblock Blowup for nonlinear hyperbolic equations.
\newblock Progress in Nonlinear Differential Equations and their Applications, 17. Birkh\"auser Boston, Inc., Boston, MA, 1995. xiv+113 pp.


\bibitem{Alinhac00}
S. Alinhac.
\newblock Rank 2 singular solutions for quasilinear wave equations.
\newblock {\em Internat. Math. Res. Notices} 2000, no. 18, 955-984.


\bibitem{Ali2d}
S. Alinhac.
\newblock The null condition for quasilinear wave equations in two space dimensions I.
\newblock {\em Invent. Math.} 145 (2001), no. 3, 597-618.



\bibitem{Alinhac1}
S. Alinhac.
\newblock An example of blowup at infinity for a quasilinear wave equation. Autour de l'analyse microlocale.
\newblock {\em Ast\'erisque} No. 284 (2003), 1-91.



\bibitem{CKL}
T. Candy, C. Kauffman and H. Lindblad.
\newblock Asymptotic Behavior of the Maxwell--Klein-Gordon system.
\newblock Preprint {\em arXiv:1803.11086}.


\bibitem{C1}
D. Christodoulou.
\newblock Global solutions of nonlinear hyperbolic equations for small initial data.
\newblock {\em Comm. Pure Appl. Math.} 39 (1986), no. 2, 267-282.


\bibitem{Cshocks}
D. Christodoulou.
\newblock The formation of shocks in 3-dimensional fluids.
\newblock {\em EMS Monographs in Mathematics}, European Mathematical Society (EMS), Z\"urich, 2007. MR2284927.


\bibitem{CK}
D. Christodoulou and S. Klainerman.
\newblock The global nonlinear stability of the Minkowski space.
\newblock {\em Princeton Mathematical Series}, 41. Princeton University Press, Princeton, NJ, 1993. x+514 pp.


\bibitem{Deng}
Y. Deng.
\newblock Multispeed Klein-Gordon systems in dimension three.
\newblock Preprint {\em arXiv:1602.01820}. To appear in {\em Int. Math. Res. Not.}

\bibitem{DIP}
Y. Deng, A. D. Ionescu and B. Pausader.
\newblock The Euler-Maxwell system for electrons: global solutions in $2$D.
\newblock {\em Arch. Ration. Mech. Anal.} 225 (2017), no. 2, 771-871.


\bibitem{DIPP}
 Y. Deng, A. D. Ionescu, B. Pausader. and F. Pusateri.
\newblock Global solutions for the $3D$ gravity-capillary water waves system, II: dispersive analysis.
\newblock {\em Acta Math.} 219 (2017), no. 2, 213-402.


\bibitem{GMS2}
P. Germain, N. Masmoudi and J. Shatah.
\newblock Global solutions for the gravity surface water waves equation in dimension 3.
\newblock {\em Ann. of Math.} 175 (2012), 691-754.



\bibitem{GIP}
Y. Guo, A. Ionescu and B. Pausader.
\newblock \newblock Global solutions of the Euler-Maxwell two-fluid system in 3D.
\newblock {\em Ann. of Math.} 183 (2016), no. 2, 377-498.


\bibitem{GNT1}
S. Gustafson, K. Nakanishi and T. Tsai.
\newblock Scattering for the Gross-Pitaevsky equation in 3 dimensions.
\newblock {\em Commun. Contemp. Math.} 11 (2009), no. 4, 657-707.


\bibitem{HN}
N. Hayashi and P. Naumkin.
\newblock Asymptotics for large time of solutions to the nonlinear Schr\"odinger and Hartree equations.
\newblock {\em  Amer. J. Math.} 120 (1998), 369-389.


\bibitem{HKSWshocks}
G. Holzegel, S. Klainerman, J. Speck and W. Wong.
\newblock Shock Formation in Small-Data Solutions to 3D Quasilinear Wave Equations: An Overview.
\newblock {\em J. Hyperbolic Differ. Equ.} 13 (2016), no. 1, 1-105.

\bibitem{Hor1}
L. H\"ormander.
\newblock The lifespan of classical solutions of nonlinear hyperbolic equations.
\newblock {\em Pseudodifferential operators} (Oberwolfach, 1986), 214-280,  Lecture Notes in Math., 1256, Springer, 1987.

\bibitem{Hor2}
L. H\"ormander.
\newblock Lectures on nonlinear hyperbolic differential equations.
\newblock Math\'ematiques \& Applications (Berlin) [Mathematics \& Applications], 26. Springer-Verlag, 1997. viii+289 pp.



\bibitem{IT}
M. Ifrim and D. Tataru.
\newblock Two dimensional water waves in holomorphic coordinates II: global solutions.
\newblock {\em Bull. Soc. Math. France} 144 (2016), 369-394.

\bibitem{IT2}
M. Ifrim and D. Tataru.
\newblock The lifespan of small data solutions in two dimensional capillary water waves.
\newblock {\em Arch. Ration. Mech. Anal.} 225 (2017), no. 3, 1279-1346.


\bibitem{IP2}
A. Ionescu and B. Pausader.
\newblock Global solutions of quasilinear systems of Klein-Gordon equations in 3D.
\newblock {\em J. Eur. Math. Soc.} 16 (2014), no. 11, 2355-2431.


\bibitem{IPMKG1}
A. Ionescu and B. Pausader.
\newblock On the global regularity for a Wave-Klein-Gordon coupled system.
\newblock Preprint {\em arXiv:1703.02846}.



\bibitem{IoPu1}
A. Ionescu and F. Pusateri.
\newblock Nonlinear fractional Schr\"{o}dinger equations in one dimension.
\newblock {\em J. Funct. Anal.} 266 (2014), 139-176.

\bibitem{IoPu2}
A. Ionescu and F. Pusateri.
\newblock Global solutions for the gravity water waves system in 2D.
\newblock {\em Invent. Math.} 199 (2015), no. 3, 653-804.

\bibitem{IoPu4}
A. Ionescu and F. Pusateri.
\newblock Global regularity for 2d water waves with surface tension.
\newblock Preprint {\em arXiv:1408.4428}. To appear in {\it Mem. Amer. Math. Soc}.


\bibitem{John0}
F. John.
\newblock Blow-up of solutions of nonlinear wave equations in three space dimensions.
\newblock {\em Manuscripta Math.} 28 (1979), no. 1-3, 235-268.

\bibitem{John2}
F. John.
\newblock Blow-up for quasilinear wave equations in three space dimensions.
\newblock {\em Comm. on Pure Appl. Math.} 34 (1981), no. 1, 29-51.



\bibitem{John3}
F. John.
\newblock Solutions of quasilinear wave equations with small initial data. The third phase.
\newblock {\em Nonlinear hyperbolic problems (Bordeaux, 1988)}, 155-184, Lecture Notes in Math., 1402, Springer, Berlin, 1989.


\bibitem{JK}
F. John and S. Klainerman.
\newblock Almost global existence to nonlinear wave equations in three space dimensions.
\newblock {\em Comm. Pure Appl. Math.} 37 (1984), no. 4, 443-455.


\bibitem{Kata}
S. Katayama.
\newblock Asymptotic pointwise behavior for systems of semilinear wave equations in three space dimensions.
\newblock {\em J. Hyperbolic Differ. Equ.} 9 (2012), no. 2, 263-323.



\bibitem{KK}
S. Katayama and H. Kubo.
\newblock Asymptotic behavior of solutions to semilinear systems of wave equations.
\newblock {\em  Indiana Univ. Math. J.} 57 (2008), no. 1, 377-400.


\bibitem{KP}
J. Kato and F. Pusateri.
\newblock A new proof of long range scattering for critical nonlinear Schr\"{o}dinger equations.
\newblock {\em Diff. Int. Equations} 24 (2011), no. 9-10, 923-940.


\bibitem{K00}
S. Klainerman.
\newblock Long time behaviour of solutions to nonlinear wave equations.
\newblock International Congress of Mathematicians, vol. 1, 2, PWN, Warsaw, 1209–1215, (1984)

\bibitem{K0}
S. Klainerman.
\newblock Uniform decay estimates and the Lorentz invariance of the classical wave equation.
\newblock {\em  Comm. Pure Appl. Math.} 38 (1985), no. 3, 321-332.


\bibitem{K1}
S. Klainerman.
\newblock The null condition and global existence for systems of wave equations.
\newblock {\em Nonlinear systems of partial differential equations in applied mathematics}, Part 1 (Santa Fe, N.M., 1984), 293-326.
\newblock Lectures in Appl. Math., 23, Amer. Math. Soc., Providence, RI, 1986.


\bibitem{Lindblad1}
H. Lindblad.
\newblock Global solutions of nonlinear wave equations.
\newblock {\em  Comm. Pure Appl. Math.} 45 (1992), no. 9, 1063-1096.

\bibitem{Lindblad2}
H. Lindblad.
\newblock Global solutions of quasilinear wave equations.
\newblock {\em  Amer. J. Math.} 130 (2008), no. 1, 115-157.

\bibitem{LindbladAsy}
H. Lindblad.
\newblock On the asymptotic behavior of solutions to the Einstein vacuum equations in wave coordinates.
\newblock {\em Comm. Math. Phys.} 353 (2017), no. 1, 135-184.


\bibitem{LR1}
H. Lindblad and I. Rodnianski.
\newblock The weak null condition for Einstein's equations.
\newblock {\em C. R. Acad. Sci. Paris.} Ser. I, (2003), no. 336, 901-906.

\bibitem{LR2}
H. Lindblad and I. Rodnianski.
\newblock Global existence for the Einstein vacuum equations in wave coordinates.
\newblock {\em Comm. Math. Phys.} 256 (2005), no. 1, 43-110.

\bibitem{LR3}
H. Lindblad and I. Rodnianski.
\newblock The global stability of Minkowski space-time in harmonic gauge.
\newblock {\em Ann. Math. } 171 (2010), no. 3, 1401-1477.


\bibitem{LinSch}
H. Lindblad and V. Schlue.
\newblock Scattering from infinity for semilinear models of Einstein's equations satisfying the weak null condition
\newblock Preprint {\em arXiv:1711.00822}.


\bibitem{nullcondition}
F. Pusateri and J. Shatah.
\newblock Space-time resonances and the null condition for (first order) systems of wave equations.
\newblock {\em Comm. Pure Appl. Math.} 66 (2013), no. 10, 1495-1540.

\bibitem{nullconditionbumi}
F. Pusateri.
\newblock Space-time resonances and the null condition for wave equations.
\newblock {\em Boll. Unione Mat. Ital.} (9) 6 (2013), no. 3, 513-529.

\bibitem{speck}
J. Speck.
\newblock Shock formation in small-data solutions to 3D quasilinear wave equations.
\newblock Mathematical Surveys and Monographs, 214. {\em American Mathematical Society, Providence, RI,} 2016. xxiii+515 pp.

\end{thebibliography}
\end{document}